\theoremstyle{definition}
\newtheorem{theorem}{Theorem}[section]
\newtheorem{proposition}[theorem]{Proposition}
\newtheorem{lemma}[theorem]{Lemma}
\newtheorem{corollary}[theorem]{Corollary}
\newtheorem{remark}[theorem]{Remark}
\newtheorem{definition}[theorem]{Definition}
\newtheorem{example}[theorem]{Example}
\newtheorem*{theorem*}{Theorem}
\newtheorem*{definition*}{Definition}
\newtheorem*{lemma*}{Lemma}
\newtheorem*{proposition*}{Proposition}
\newtheorem*{corollary*}{Corollary}
\newtheorem*{example*}{Example}
\newtheorem*{remark*}{Remark}
\newtheorem*{claim*}{Claim}
\numberwithin{equation}{section}
\def\diam{\mathrm{diam}}
\def\natural{\mathbb{N}}
\def\naturalz{\mathbb{N}_{0}}
\def\real{\mathbb{R}}
\def\complex{\mathbb{C}}
\def\tree{\mathbb{T}}
\title{Understanding the limit sets generated by general iterated function systems on unbounded spaces\footnote{2020 Mathematics Subject Classification: 28A80}}
\author{Kanji Inui\footnote{corresponding author}\\
Center for Mathematics, School of Fundamental Science and Technology, \\
Faculty of Science and Technology, Keio University\\
3-14-1, Hiyoshi, Kohoku-ku, Yokohama, 223-8522, JAPAN\\
E-mail: k\_inui@keio.jp\\
}
\begin{document}							
\maketitle

\begin{abstract}
In this paper, we reformulate the definition of the iterated function systems (denoted by general IFSs in this paper) and show the existence and uniqueness (in some sense) of the limit sets generated by the general IFSs, to unify the definitions of the limit sets introduced before. 
Note that the general IFSs are defined on (possibly unbounded) complete metric spaces and we instead assume a ``natural" condition of general IFSs to show the main result. 
To obtain the main result, we apply techniques in the Banach fixed point theorem to the general IFSs with the ``natural" condition. 
Besides, we consider an example of general IFSs. 
\end{abstract}
\setcounter{subsection}{0}
\section{Introduction}
Since many researchers recognized the importance of fractals and have studied iterated function systems (for short, IFSs) in the 1970s, IFS is one of the mathematically powerful tools to construct fractals (often called limit sets) and indeed there are many mathematical papers on the limit sets generated by IFSs. 
In particular, the limit sets generated by iterated function systems with finitely many mappings (henceforth, denoted by autonomous IFSs) have been well-studied, and there exist many results not only on some properties of the limit sets but also on dimensions and measures of the limit sets (\cite{H}, \cite{BG}, \cite{Sch}, \cite{F}, \cite{B}, \cite{Ki}, \cite{KTMV}). 
Note that the limit sets generated by autonomous IFSs automatically have a nice property (self-similarity), which deduces rich results on the limit sets. 

On the other hand, there exist results on the limit sets which are not generated by autonomous IFSs (henceforth, denoted by generalized IFSs) and there are at least three generic lines of studies of the limit sets generated by the generalized IFSs. 
The study in the first line shows the estimates on the dimensions of the limit sets under the assumption of the existence of the limit sets (Moran set, see \cite{Mo}, \cite{HRWW}, \cite{HZ}, \cite{LLMX}, \cite{GM}) and these papers also include examples and applications of the results. 
The study in the second line ensures the existence of the limit sets generated by the non-autonomous (1-variable) IFSs (and some generalizations, see \cite{ReU}, \cite{A}, \cite{LDV}, \cite{DLM}, \cite{Mas}, \cite{N}) and papers \cite{ReU}, \cite{A} and \cite{N} also gives the theorems on the estimation of the dimensions (and measures) of the limit sets. 
The study in the third line shows the existence of the limit sets generated by generalized IFSs (V-variable fractals) and the theorems on the estimation of the Hausdorff dimension of the limit sets by using probabilistic techniques (\cite{BHS1}, \cite{BHS2}, \cite{Sce}). 

These studies indicate there is room to analyze not only the limit sets generated by autonomous IFSs but also the ones generated by generalized IFSs. 
But, these theories have proposed different definitions and assumptions for generalized IFSs, which deduce the results on the estimation of dimensions (and measures) of the limit set generated by each generalized IFS. 
In addition, it is worth mentioning that the  above papers consider the generalized IFSs defined on bounded sets (in some sense) or compact sets. 
Indeed, the papers in the first line consider generalized IFSs under the assumptions which allow us to restrict the domains of the IFSs to a (common) bounded set. 
The papers in the second line consider the non-autonomous (1-variable) IFSs on compact sets or bounded sets (in some sense) to obtain some results (in particular, the existence of the limit sets). 
The papers in the third line consider generalized IFSs under the assumption which implicitly allows us to restrict the domains of the IFSs to a (common) bounded set (see, Remark \ref{limit_set_boundedness_remark}). 

To address the issue, it is important to understand the connection of the definitions of the limit sets generated by the generalized IFSs, and it is natural to reformulate generalized IFSs (henceforth, these reformulated IFSs are denoted by general IFSs) to unify the definitions of the limit sets introduced before. 
Therefore, the aim of this paper is to present the reformulation of the definition of generalized IFSs to show the existence and uniqueness (in some sense) of the limit sets generated by the general IFSs and to unify the definitions of the limit sets introduced before. 

More precisely, we first introduce IFSs which consist of a family of uniformly contractive mappings and a set of all ``infinite words" (called a tree, see Definition \ref{def_of_tree}). 
We next define the projection map for general IFSs by using the ``compatible" sequence for a non-autonomous (recursive) iteration generated by ``infinite words" and the family of the contractive mappings of the general IFS, under the ``natural" condition (see Lemma \ref{projection_map_conti_lemma} and Definition \ref{definition_of_projection_map}).  
Note that the notion of the projection map is already introduced in the second and third line (called the address map in the third line), and we show that the projection map in this paper indeed coincides with the projection map introduced in the second and third line (see, Proposition \ref{justification_projection_map} and Proposition \ref{projection_map_connection_remark}). 
Then, we now construct the family of limit sets for general IFSs by using the projection map for the general IFS and show the uniqueness (in some sense) of the family, under the ``natural" condition (see, Theorem \ref{main} (the main result)). 
Note that the idea of the family of the limit sets is already introduced in the first line (called the basic sets with the Moran structure in the first line), and also note that the definition of the family of limit sets is derived from the definition of limit sets in the second line. 
In addition, we show that the family of the limit sets in this paper is compatible with the definition of basic sets with the Moran structure, and each limit set in the family is expressed as the limit point of the iterations in sense of the Hausdorff distance. 
It follows that the family of the limit sets in this paper is a generalization of the limit sets in the third line. 
We remark that, under the ``natural" condition, we do not assume that general IFSs are defined on bounded sets (in some sense) or compact sets. 

Moreover, to indicate the importance of the general IFSs, we give an example of general IFSs which has a connection to the theory of continued fractions, and we obtain the result on the dimension of the limit set generated by the IFS in this example (see, Proposition  \ref{continued_fraction__elementray_lemma}). 
Note that, while the theory of continued fractions is often discussed in the theory of autonomous IFSs (see \cite{MU}, \cite{MU2}), it is not often discussed in the theory of generalized IFSs (you can find a recent paper \cite{NT} in the setting for non-autonomous IFSs). 
In addition, while we already obtain the existence of the limit set generated by the IFS in the example by applying results in the third line, this example is not much paid attention to the limit set since it does not satisfy the central condition (the $V$-variability). 

Before we present the strategy of the main result in this paper, we recall the Hutchinson technique which is a technique to construct the limit sets generated by autonomous IFSs (in detail, see \cite{Ki}). 
Indeed, We first consider a complete metric space $X$ (which is possibly unbounded) and the set of all non-empty compact subsets of $X$ with the Hausdorff distance (denoted by $(\mathcal{K}(X), d_{H})$). 
Note that $(\mathcal{K}(X), d_{H})$ is complete since $X$ is complete. 
In the Hutchinson technique, for an autonomous IFS, we introduce an operator on $(\mathcal{K}(X), d_{H})$ associated with the autonomous IFS (called the Barnsley operator) and show the contractivity of the Barnsley operator on $(\mathcal{K}(X), d_{H})$. 
By the Banach fixed point theorem, we deduce that there exists the unique fixed point $K \in \mathcal{K}(X)$ (the unique non-empty compact subset) of the Barnsley operator on $(\mathcal{K}(X), d_{H})$ and the unique fixed point $K$ is called the limit set for the autonomous IFS (or called the self-similar set in this context). 
In addition, there is a connection between the limit sets generated by the autonomous IFS and a set of all infinite sequences of symbols (called the symbolic space). 
Indeed, recall that each point of the limit sets is expressed as some (recursive) iteration generated by the autonomous IFS. 
Since each iteration is expressed as the infinite sequence of the symbols, there is a ``nice" map on the symbolic space such that the image of the ``nice" map equals the limit set, and the ``nice" map is called the code map. 
Note that the projection map is a generalization of the code map. 
Also, note that neither compactness nor boundedness of $X$ is not assumed in the above arguments. 
Later we find that, in the theory of generalized IFSs, the compactness or boundedness (in some sense) of $X$ is a sufficient condition to obtain the existence and uniqueness of the limit sets (see, Theorem \ref{main} (the main result)). 

Now, we give the strategy to obtain the main results (Theorem \ref{main}). 
To obtain the projection map for general IFSs, we first consider non-autonomous (recursive) iterations generated by a sequence of uniformly contractive mappings on a complete metric space, and we recall results on the existence and uniqueness of the ``compatible" sequence for the non-autonomous (recursive) iterations and some properties of the ``compatible" sequence. 
Note that we need the ``natural" condition to show the above results by using the techniques in the Banach fixed point theorem (indeed there is a counterexample, see Example \ref{1dimex}). 
By the above argument, we next define the projection map for a general IFS and discuss some properties of the projection map by applying the above results to the non-autonomous (recursive) iterations generated by the general IFS on a complete metric space. 
In particular, we show the continuity of the projection map for general IFSs. 
Now, we finally construct the family of the limit sets for a general IFS by the continuity of the projection map. 
Then we obtain the uniqueness (in some sense) of the family of the limit sets and show that each limit set in the family is expressed as the limit point of the iterations in sense of the Hausdorff distance (the Theorem \ref{main} (the main result)) by the Hutchinson technique. 
To this end, there are two points to show the main theorem. 
Indeed, by using some properties of the Hausdorff distance and the assumption in the definition of the general IFS, we first show that the convergence of limit sets in sense of the Hausdorff distance with the initial compact set $\{x\} \in \mathcal{K}(X)$ ($x \in X$) and next show that the convergence of limit sets in sense of the Hausdorff distance does not depend on the initial compact set. 

The rest of the paper is organized as follows. 
In Section 2, we recall some basic properties of non-autonomous (right) iterations generated by a sequence of contractive mappings on complete metric spaces. 
In addition, we also present some examples of non-autonomous iterations in this section. 
In Section 3, we give the definitions of general IFSs and the projection map for general IFSs. 
Indeed, we first introduce the notion of trees and discuss some properties of the trees. 
We next introduce the definition of general IFSs on complete metric spaces and the projection map for general IFSs. 
We also discuss the properties of the projection map for general IFSs in this section. 
In Section 4, we finally construct the family of limit sets generated by general IFSs and show the uniqueness of the family (in some sense) and properties of the family (the main result). 
In Section 5, we give an example of general IFSs and discuss its properties. 
%
%
%
%
%
%
%
%
%
%
%
%
%
%
%
%
%
%
%
%
%
%
%
%
%
%
%
\section{Preliminaries}
In this section, we recall non-autonomous (right) iterations of contractive mappings on a complete metric space to consider general IFSs. 
In Subsection 2.1, we first present the existence and uniqueness (in some sense) of the recursively compatible sequence (see, Definition \ref{definition_of_nonautonomous_iterations}) under a ``natural" condition. 
Note that the recursively compatible sequence is a generalization of fixed points in the theory of dynamical systems. 
In Subsection 2.2, we present some examples of non-autonomous iterations of contractive mappings. 

Now, we first introduce the definition of sequence of contractive mapping with uniform contraction constant. 
Henceforth, $\natural$ is the set of positive integers and $\naturalz$ is the set of non-negative integers. 

\begin{definition}\label{definition_of_nonautonomous_iterations}
    We say that $f_{j} \colon X \to X \ (j \in \natural)$ be a sequence of contractive mappings on a complete metric space $(X, \rho)$ with an uniform contraction constant $c \in (0, 1)$ if 
    \begin{equation*} \label{basicnonautonomous}
        \rho(f_{j}(x), f_{j}(y)) \leq c \ \rho(x, y)
    \end{equation*}
     for all $j \in \natural$ and $x, y \in X$. 
    Let $\{ f_{j} \}_{j \in \natural}$ be a sequence of contractive mappings on a complete metric space $(X, \rho)$ with an uniform contraction constant $c \in (0, 1)$. 
    We say that $\{x_{m}\}_{m \in \natural} \subset X$ is a recursively compatible sequence for $\{ f_{j} \}_{j \in \natural}$ if
    $f_{m}(x_{m+1}) = x_{m}$ for each $m \in \natural$. 
\end{definition}
Note that there exists the unique fixed point $z_{j}$ of $f_{j}$ for each $j \in \natural$ since $X$ is complete. 
We set $Z := \{ z_{j} \in X \ | \ j \in \natural \}$. 

Let $f_{j} \colon X \to X \ (j \in \natural)$ be a sequence of contractive mappings on a complete metric space $(X, \rho)$ with an uniform contraction constant $c \in (0, 1)$. 
Then, for each $m \in \natural$, we call $\{f_{[m, n]} \}_{n \geq m}$ and $\{f_{[m, n)}\}_{n > m}$ a non-autonomous (right) iteration of the contractive mappings on $X$ with an uniform contraction constant $c \in (0, 1)$. 
Here, $f_{[m, n]} \colon X \to X$ and $f_{[m, n)} \colon X \to X$ are defined by
\begin{align}\label{nonautonomousrightiteratedsequence}
    f_{[m, n]} &:= f_{m} \circ \cdots \circ f_{n} \quad (n \geq m) \quad \text{and} \quad 
    f_{[m, n)} := f_{m} \circ \cdots \circ f_{n-1} \quad (n > m). \notag
\end{align}
Also, for each $m \in \naturalz$, the mappings $f_{(m, n]} \colon X \to X$ and $f_{(m, n)} \colon X \to X$ is defined by 
\begin{align*}
    f_{(m, n]} &:= f_{m+1} \circ \cdots \circ f_{n} \quad \text{and} \quad 
    f_{(m, n)} := f_{m+1} \circ \cdots \circ f_{n-1}
\end{align*}
respectively if the relation of $m, n \in \naturalz$ is compatible with compositions of contractive mappings $f_{j} \colon X \to X$ ($j \in \natural$).  

%
%
%
%
%
%
%
%
%
%
%
%
%
%
%
%
%
%
%
%
%
%
%
%
%
%
%
%
%
%
%
%
%
%
%
%
%
%
%
%
%
%
%
%
%
%
\subsection{Basic properties of non-autonomous iterations on complete metric spaces} 
In this subsection, we recall some results on non-autonomous iterations which is a slight generalization of Banach's fixed point theorem. 
Note that we do not assume that $Z$ is bounded if we do not mention it in the statements. 
For the readers, we give a proof of the results. 

Henceforth, we sometimes refer $x \in X$ as a base point of $X$ and $y \in X$ as the starting point of $X$. 
Before we present the main result in this subsection, we give the following lemmas. 

\begin{lemma} \label{Cauchyfiniteness} 
    Let $f_{j} \colon X \to X \ (j \in \natural)$ be a sequence of contractive mappings on a complete metric space $(X, \rho)$ with an uniform contraction constant $c \in (0, 1)$ and $z_{j} \in X \ (j \in \natural)$ be the unique fixed point of $f_{j}$. 
    If there exists $x \in X$ such that 
    $\sum_{j \in \natural} c ^{j} \rho(x, z_{j}) < \infty, $
    then $\sum_{j \in \natural} c ^{j} \rho(x^{\prime}, z_{j}) < \infty$ for each $x^{\prime} \in X$. 
\end{lemma}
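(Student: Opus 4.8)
The plan is to reduce everything to the triangle inequality together with convergence of a geometric series. The key observation is that the quantities $\rho(x, z_j)$ and $\rho(x', z_j)$ differ, for each fixed $j$, by at most the constant $\rho(x, x')$, which does not depend on $j$. So the two series $\sum_{j} c^j \rho(x, z_j)$ and $\sum_{j} c^j \rho(x', z_j)$ should differ by at most a convergent tail coming from the geometric factor $c^j$.

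Concretely, first I would fix $x' \in X$ and, for each $j \in \natural$, apply the triangle inequality in $(X, \rho)$ to write
\[
\rho(x', z_j) \leq \rho(x', x) + \rho(x, z_j).
\]
Multiplying by $c^j > 0$ and summing over $j \in \natural$ gives
\[
\sum_{j \in \natural} c^j \rho(x', z_j) \leq \rho(x', x) \sum_{j \in \natural} c^j + \sum_{j \in \natural} c^j \rho(x, z_j).
\]
The second term on the right is finite by hypothesis. For the first term, since $c \in (0,1)$, the geometric series $\sum_{j \in \natural} c^j = \frac{c}{1-c}$ converges, so $\rho(x', x) \sum_{j \in \natural} c^j < \infty$. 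Hence the right-hand side is finite, and therefore $\sum_{j \in \natural} c^j \rho(x', z_j) < \infty$, as required. (One should note that all terms are nonnegative, so rearranging and splitting the sum is harmless.)

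There is essentially no serious obstacle here: the statement is a stability remark ensuring that the summability condition $\sum_j c^j \rho(x, z_j) < \infty$ is independent of the chosen base point, and the only ingredient beyond the triangle inequality is the elementary fact that $\sum_j c^j < \infty$ because $c < 1$. The mild point to be careful about is simply that the constant $\rho(x,x')$ is genuinely independent of $j$, so that it factors out of the sum and gets absorbed by the geometric series rather than interacting with the (possibly unbounded) set $Z = \{z_j\}$.
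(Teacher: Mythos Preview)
Your proof is correct and is essentially identical to the paper's own argument: both apply the triangle inequality $\rho(x', z_j) \leq \rho(x', x) + \rho(x, z_j)$, multiply by $c^j$, sum, and use convergence of the geometric series $\sum_j c^j$ together with the hypothesis to conclude.
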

\begin{proof}
    Let $x^{\prime} \in X$ and $x \in X$ with $\sum_{j \in \natural} c ^{j} \rho(x, z_{j}) < \infty$. 
    Then, 
    \begin{equation*}
        \sum_{j \in \natural} c ^{j} \rho(x^{\prime}, z_{j}) \leq \rho(x^{\prime}, x) \sum_{j \in \natural} c ^{j} + \sum_{j \in \natural} c ^{j} \rho(x, z_{j}) < \infty 
    \end{equation*}
    since $\sum_{j \in \natural} c ^{j} < \infty$. 
    Therefore, we have proved our lemma. 
\end{proof}
\begin{remark}
    By Lemma \ref{Cauchyfiniteness}, if there exists $x \in X$ such that $\sum_{j \in \natural} c^{j} \rho(x, z_{j}) = \infty$, then $\sum_{j \in \natural} c^{j} \rho(x^{\prime}, z_{i}) = \infty$ for each $x^{\prime} \in X$. 
    Therefore, the property $\sum_{j \in \natural} c ^{j} \rho(x, z_{j}) < \infty$ does not depend on the point $x \in X$ but depend on the iteration $\{f_{j}\}_{j \in \natural}$. 
\end{remark}
\begin{lemma}[Collage theorem, Inverse collage theorem \cite{KTMV}]\label{collage}
    Let $f \colon X \to X $ be a contractive mapping on a complete metric space $(X, \rho)$ with a contraction constant $c \in (0, 1)$. 
    Let $z \in X$ be the unique fixed point of $f$. 
    Then, for each $a \in X$, we have
    \begin{equation*}
        \rho(f(a), a) \leq (1 + c) \ \rho(z, a) \quad \text{and} \quad \rho(z, a) \leq \frac{\rho(f(a), a)}{1 - c}. 
    \end{equation*}
\end{lemma}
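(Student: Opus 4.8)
The plan is to derive both inequalities directly from the triangle inequality, the contraction estimate for $f$, and the fixed-point identity $f(z) = z$. No iteration or completeness argument is needed beyond knowing that $z$ exists and is fixed, which is already guaranteed.

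For the first inequality, I would start from the triangle inequality $\rho(f(a), a) \leq \rho(f(a), z) + \rho(z, a)$, then rewrite $z = f(z)$ so that $\rho(f(a), z) = \rho(f(a), f(z)) \leq c\,\rho(a, z)$ by contractivity. Substituting gives $\rho(f(a), a) \leq c\,\rho(a,z) + \rho(z,a) = (1+c)\,\rho(z,a)$, as claimed.

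For the second inequality, I would again use the triangle inequality in the form $\rho(z, a) \leq \rho(z, f(a)) + \rho(f(a), a)$ and then use $z = f(z)$ together with contractivity to bound $\rho(z, f(a)) = \rho(f(z), f(a)) \leq c\,\rho(z, a)$. This yields $\rho(z, a) \leq c\,\rho(z,a) + \rho(f(a), a)$; since $c < 1$ we may rearrange to $(1-c)\,\rho(z,a) \leq \rho(f(a), a)$ and divide by $1-c > 0$ to obtain $\rho(z, a) \leq \rho(f(a), a)/(1-c)$.

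There is no real obstacle here: the proof is a two-line application of the triangle inequality in each direction, and the only subtlety is remembering to invoke $f(z) = z$ to convert a distance involving $z$ into one to which the contraction constant $c$ applies, and (for the second bound) that $1 - c > 0$ permits the division. I would keep the write-up to a handful of displayed lines.
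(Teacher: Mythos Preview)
Your argument is correct and is the standard proof of this classical fact. Note that the paper does not actually supply its own proof here: the lemma is stated with a citation to \cite{KTMV} and used without further justification, so there is no in-paper argument to compare against. Your write-up would serve perfectly well as a self-contained proof.
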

We now present a important lemma which is used Sections \ref{general_IFS} and \ref{limit_set}. 
\begin{lemma}\label{e_contractiveiteration}
    Let $f_{j} \colon X \to X \ (j \in \natural)$ be a sequence of contractive mappings on a complete metric space $(X, \rho)$ with an uniform contraction constant $c \in (0, 1)$ and $z_{j} \in X \ (j \in \natural)$ be the unique fixed point of $f_{j}$. 
    Suppose that there exists $x \in X$ such that 
    \begin{equation} \label{Cauchyfinitenesscondition}
        \sum_{j \in \mathbb{N}} c ^{j} \rho(x, z_{j}) < \infty. 
    \end{equation}
    Then, 
    for all $m \in \natural$, there exists $x_{m} \in X$ such that for all $y \in X$ and $n \in \naturalz$,  
    \begin{align} \label{basic_rate_of_naifs}
        \rho(f_{[m, m+n]}(y), x_{m})
        \leq (1+c) \cdot c^{-m} l_{y}(m+n+1), 
    \end{align}
    where $l_{y}(n) := \sum_{k = n}^{\infty} c^{k} \rho(y, z_{k})$ \ $(n \in \natural)$. 
    In addition, $\{x_{m}\}_{m \in \natural}$ has the following properties: 
    \begin{enumerate}
        \item $\{x_{m}\}_{m \in \natural}$ is a recursively compatible sequence for $\{ f_{j} \}_{j \in \natural}$ and
        \item there exists $C > 0$ such that $\rho(y, x_{m}) \leq C \cdot l_{y}(m) \cdot c^{-m}$ for each $y \in X$ and $m \in \natural$. 
    \end{enumerate}
    Moreover, the sequence $\{ x_{m} \}_{m \in \natural}$ with the above properties is unique. 
\end{lemma}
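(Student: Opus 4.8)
The plan is to adapt the proof of the Banach fixed point theorem: for each $m$ I would build $x_{m}$ as the limit of the non-autonomous orbit $n \mapsto f_{[m, m+n]}(y)$ and extract all the quantitative claims from one telescoping estimate. Since \eqref{Cauchyfinitenesscondition} holds for some $x \in X$, Lemma \ref{Cauchyfiniteness} gives $\sum_{j \in \natural} c^{j} \rho(y, z_{j}) < \infty$ for every $y \in X$, so $l_{y}(n)$ is a finite nonnegative real for all $y \in X$ and $n \in \natural$, and $l_{y}(n) \to 0$ as $n \to \infty$ (it is the tail of a convergent series). Fix $m \in \natural$ and $y \in X$. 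Using $f_{[m, m+n+1]} = f_{[m, m+n]} \circ f_{m+n+1}$, the fact that $f_{[m, m+n]}$ is a composition of $n+1$ maps of contraction ratio $c$, and the collage inequality (Lemma \ref{collage}) for $f_{m+n+1}$, one obtains
\begin{equation*}
\rho\bigl(f_{[m, m+n]}(y), f_{[m, m+n+1]}(y)\bigr) \le c^{\,n+1}\,\rho\bigl(y, f_{m+n+1}(y)\bigr) \le (1+c)\,c^{\,n+1}\,\rho(y, z_{m+n+1}).
\end{equation*}
Summing this over $n$ from $n_{0}$ to $q-1$ and reindexing the resulting telescoping sum by $k = m+n+1$ (so that $c^{\,n+1} = c^{-m}c^{\,k}$) bounds $\rho\bigl(f_{[m, m+n_{0}]}(y), f_{[m, m+q]}(y)\bigr)$ by $(1+c)\,c^{-m}\,l_{y}(m+n_{0}+1)$. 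As $l_{y}(m+n_{0}+1) \to 0$, the sequence $\{f_{[m, m+n]}(y)\}_{n \in \naturalz}$ is Cauchy in the complete space $X$; I would \emph{define} $x_{m}$ to be its limit, and letting $q \to \infty$ in the displayed bound yields exactly \eqref{basic_rate_of_naifs}.

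Next I would check that $x_{m}$ does not depend on the starting point $y$: for $y, y' \in X$ we have $\rho\bigl(f_{[m, m+n]}(y), f_{[m, m+n]}(y')\bigr) \le c^{\,n+1}\rho(y, y') \to 0$, so by the triangle inequality the two limits coincide; hence $x_{m}$ is well defined and \eqref{basic_rate_of_naifs} holds for this single $x_{m}$ and every $y, n$. Property (i) is then immediate: by continuity of $f_{m}$ and the identity $f_{m} \circ f_{[m+1, m+1+n]} = f_{[m, m+1+n]}$ we get $f_{m}(x_{m+1}) = \lim_{n} f_{[m, m+1+n]}(y) = x_{m}$, so $\{x_{m}\}_{m \in \natural}$ is recursively compatible. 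For property (ii) I would use $\rho(y, x_{m}) \le \rho(y, f_{m}(y)) + \rho(f_{m}(y), x_{m})$: the collage inequality for $f_{m}$ gives $\rho(y, f_{m}(y)) \le (1+c)\rho(y, z_{m}) \le (1+c)c^{-m}l_{y}(m)$ (since $c^{m}\rho(y, z_{m})$ is one term of $l_{y}(m)$), the case $n = 0$ of \eqref{basic_rate_of_naifs} gives $\rho(f_{m}(y), x_{m}) \le (1+c)c^{-m}l_{y}(m+1)$, and $l_{y}(m+1) \le l_{y}(m)$, so $\rho(y, x_{m}) \le 2(1+c)\,l_{y}(m)\,c^{-m}$; thus $C = 2(1+c)$ works uniformly in $y$ and $m$.

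Finally, uniqueness: if a sequence $\{x_{m}'\}_{m \in \natural}$ also satisfies \eqref{basic_rate_of_naifs}, then fixing any $y$ and letting $n \to \infty$ forces $x_{m}' = \lim_{n} f_{[m, m+n]}(y) = x_{m}$ for every $m$. Alternatively, recursive compatibility alone gives $x_{m}' = f_{[m, m+n]}(x_{m+n+1}')$, and combining property (ii) with the $c^{\,n+1}$-contractivity of $f_{[m, m+n]}$ shows $\rho\bigl(x_{m}', f_{[m, m+n]}(y)\bigr) \le C\,l_{y}(m+n+1)\,c^{-m} \to 0$, giving the same conclusion, so the properties (i)--(ii) together also characterise the sequence.

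There is no deep obstacle here: the argument is a routine non-autonomous version of the Banach fixed point theorem. The only point requiring care is the reindexing of the telescoping series so that the Cauchy bound comes out precisely as $(1+c)\,c^{-m}\,l_{y}(m+n+1)$ (with the stated constant and the tail starting at $m+n+1$) rather than as a cruder multiple of it; this, together with keeping track of which composition carries contraction ratio $c^{\,n+1}$ rather than $c^{\,n+2}$, is where the bookkeeping must be done carefully.
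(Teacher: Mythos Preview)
Your proof is correct and follows essentially the same route as the paper: a telescoping estimate via the collage lemma shows $\{f_{[m,m+n]}(y)\}_{n}$ is Cauchy, $x_{m}$ is defined as its limit, independence of the starting point is checked by $c^{n+1}\rho(y,y')\to 0$, and (i), (ii), and uniqueness follow exactly as you describe. The only cosmetic difference is that the paper obtains the sharper constant $C=1+c$ in (ii) by noting that $c^{m}\rho(y,z_{m})+l_{y}(m+1)=l_{y}(m)$ exactly, whereas you bound the two terms separately to get $C=2(1+c)$; this is immaterial since the statement only asks for some $C>0$.
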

\begin{remark} \label{remark_of_e_contractiveiteration}
    Note that it is necessary to assume that $\sum_{j \in \mathbb{N}} c ^{j} \rho(x, z_{j}) < \infty$ for some $x \in X$ (see, Example \ref{1dimex}). 
    Under the condition, we obtain the inequality (\ref{basic_rate_of_naifs}) which shows that for all $m \in \mathbb{N}$, the sequence $\{f_{[m, m+n]}(y)\}_{n \in  \naturalz}$ converges to $x_{m} \in X$ as $n$ tends to infinity and the limit point $x_{m} \in X$ does not depend on the starting point $y \in X$ (but the convergence rate $l_{y}(n)$ depends on the starting point $y \in X$).
\end{remark}
\begin{proof}[proof of Lemma \ref{e_contractiveiteration}]
    We first show that for each $m \in \natural$, there exists $x_{m} \in X$ such that the inequality (\ref{basic_rate_of_naifs}) holds for all $y \in X$ and $n \in \naturalz$. 
    To show this, let $m \in \natural$ and we set $x_{m}(n) := f_{[m, m+n]}(x) \ (n \in \naturalz)$. 
    Then, by Lemma \ref{collage}, for all $n_{1}, n_{2} \in \naturalz$ with $n_{1} < n_{2}$,
    \begin{align}
        \rho(x_{m}(n_{1}), x_{m}(n_{2})) 
        &\leq \sum_{k=n_{1}+1}^{n_{2}} \rho(x_{m}(k-1), x_{m}(k))  
        = \sum_{k=n_{1}+1}^{n_{2}} \rho(f_{[m, m+k-1]}(x), f_{[m, m+k]}(x)) \notag \\
        &= \sum_{k=n_{1}+1}^{n_{2}} c^{k} \rho(x, f_{m+k}(x)) 
        \leq c^{-m} (1+c) \sum_{k=n_{1}+1}^{n_{2}} c^{k+m} \rho(x, z_{m+k}) \notag \\
        &\leq c^{-m} (1+c) \sum_{k=m+n_{1}+1}^{m+n_{2}} c^{k} \rho(x, z_{k}). \label{importantpoint}
    \end{align}
    It follows that $\{ x_{m}(n) \}_{n \in \naturalz}$ is a Cauchy sequence in $X$ and there exists $x_{m} \in X$ such that $x_{m}(n)$ converges to $ x_{m}$ as $n$ tends to infinity. 

    Now, let $y \in X$. 
    We set $y_{m}(n) := f_{[m, m+n]}(y) \ (n \in \naturalz)$. 
    Note that by Lemma \ref{Cauchyfiniteness}, 
    \begin{equation*}
        \sum_{j \in \mathbb{N}} c ^{j} \rho(y, z_{j}) < \infty.  
    \end{equation*}
    By the same argument, we have 
    \begin{align} \label{importantpointgeneral}
        \rho(y_{m}(n_{1}), y_{m}(n_{2})) \leq c^{-m} (1+c) \sum_{k=m+n_{1}+1}^{m+n_{2}} c^{k} \rho(y, z_{k}) 
    \end{align}
    for all $n_{1}, n_{2} \in \naturalz$ with $n_{1} < n_{2}$ , and there exists $y_{m} \in X$ such that $\{ y_{m}(n) \}_{n \in \naturalz}$ converges to $y_{m}$ as $n$ tends to infinity. 
    In addition, for all $n \in \naturalz$, we have
    \begin{align*}
        \rho(x_{m}, y_{m}) 
        &\leq \rho(x_{m}, x_{m}(n)) + \rho(x_{m}(n), y_{m}(n)) + \rho(y_{m}(n), y_{m}) \\
        &\leq \rho(x_{m}, x_{m}(n)) + c^{n+1}\rho(x, y) + \rho(y_{m}(n), y_{m}). 
    \end{align*}
    It follows that $x_{m} = y_{m}$ for each $m \in \natural$. 
    Besides, by the inequality (\ref{importantpointgeneral}), we have
    \begin{align*} 
        &\rho(f_{[m, m+n_{1}]}(y), x_{m})
        =\lim_{n_{2} \to \infty} \rho(y_{m}(n_{1}), y_{m}(n_{2}))
        \leq c^{-m}(1+c)\sum_{k = m+n_{1}+1}^{\infty} c^{k}\rho(y, z_{k}). 
    \end{align*}
    Therefore, we have proved the inequality (\ref{basic_rate_of_naifs}). 
    
    We next show the properties (i) and (ii) in Lemma \ref{e_contractiveiteration}. 
    Indeed, for all $m \in \natural$, $n \in \naturalz$, 
    \begin{align*}
        f_{m}(x_{m+1}(n))
        = f_{m} \circ f_{[m+1, m+1+n]}(x) 
        = f_{[m, m+n+1]}(x) 
        = x_{m}(n+1). 
    \end{align*}
    Since $x_{m+1}(n) \overset{n \longrightarrow \infty}{\longrightarrow} x_{m+1}$, $x_{m}(n+1)  \overset{n \longrightarrow \infty}{\longrightarrow} x_{m}$ and $f_{m}$ is continuous, we deduce that $f_{m}(x_{m+1}) = x_{m}$ for each $m \in \natural$. 
    In addition, we set $C:= 1+c \ ( > 0 )$ and let $m \in \natural$ and $y \in X$. 
    By the inequality (\ref{importantpointgeneral}) with $n_{1} = 0$, we have
    \begin{align} \label{asym_bdd_of_x_m}
        &c^{m} \rho(y, x_{m})
        = \lim_{n_{2} \to \infty} c^{m} \rho(y, y_{m}(n_{2}))
        \leq \lim_{n_{2} \to \infty} c^{m} \left\{ \rho(y, f_{m}(y)) + \rho(y_{m}(0), y_{m}(n_{2})) \right\} \notag \\
        &\leq c^{m} \left\{ (1+c)\rho(y, z_{m}) + c^{-m}(1+c) \sum_{k = m+1}^{\infty} c^{k}\rho(y, z_{k}) \right\}
        = (1+c)\sum_{k = m}^{\infty} c^{k}\rho(y, z_{k}). 
    \end{align}
    Thus, we have proved the properties (i) and (ii) in Lemma \ref{e_contractiveiteration}. 
    
    We finally show the uniqueness of the sequence $\{ x_{m} \}_{m \in \natural}$ with the properties (i) and (ii) in Lemma \ref{e_contractiveiteration}. 
    Let $m \in \natural$ and $\{ \tilde{x}_{m} \}_{m \in \natural}$ be a sequence in $X$ with the properties. 
    Then, by the properties (i) and (ii) for $\{x_{m}\}_{m \in \natural}$ and $\{\tilde{x}_{m}\}_{m \in \natural}$, there exists $\tilde{C} > 0$ such that, for each $n_{1} \in \naturalz$ and $y \in \naturalz$, 
    \begin{align*}
        \rho(x_{m}, \tilde{x}_{m})
        &\leq \rho(f_{[m, m+n_{1}]}(x_{m}), f_{[m, m+n_{1}]}(y)) + \rho(f_{[m, m+n_{1}]}(y), f_{[m, m+n_{1}]}(\tilde{x}_{m+n_{1}+1})) \\
        &\leq c^{-m} \cdot c^{m+n_{1}+1} \rho (y, x_{m+n_{1}+1}) + c^{-m} \cdot c^{m+n_{1}+1} \rho (y, \tilde{x}_{m+n_{1}+1}) \\
        &= c^{-m} \max \{C, \tilde{C}\} \cdot l_{y}(m+n_{1}+1). 
    \end{align*}
    It follows that $x_{m} = x^{\prime}_{m}$ for each $m \in \natural$. 
    Hence, we have proved our lemma. 
\end{proof}
As we have mentioned in Remark \ref{remark_of_e_contractiveiteration}, the rate $l_{y}$ in Lemma \ref{e_contractiveiteration} depends on the starting point $y \in X$ of non-autonomous iterations. 
To obtain more similar results to Banach's fixed point theorem, we need the following Lemma. 
\begin{lemma}\label{further_e_contractiveiteration}
    Let $f_{j} \colon X \to X \ (j \in \natural)$ be a sequence of contractive mappings on a complete metric space $(X, \rho)$ with an uniform contraction constant $c \in (0, 1)$ and $z_{j} \in X \ (j \in \natural)$ be the unique fixed point of $f_{j}$. 
    Suppose that there exist $x \in X$ and $a \colon \natural \to \real$ with $\sum_{j \in \natural} a(j) < \infty$ such that for all $j \in \mathbb{N}$, 
    \begin{equation} \label{afastconvcondition}
        c^{j} \rho(x, z_{j}) \leq a(j).    
    \end{equation}
    Then, for all $m \in \natural$, there exists $x_{m} \in X$ such that for all $y \in X$ and $n \in \naturalz$,  
    \begin{align} \label{yajrate}
        \rho(f_{[m, m+n]}(y), x_{m})
        \leq \max\{ (1+c), \rho(y, x)\} 
        \cdot c^{-m} \cdot l^{\prime}(m+n+1), 
    \end{align}
    where $l^{\prime}(n) := \max \{ c^{n}, \sum_{k=n}^{\infty} a(k) \} \ (n \in \natural)$. 
    In addition, $\{x_{m}\}_{m \in \natural}$ have the following properties: 
    \begin{enumerate}
        \item $\{x_{m}\}_{m \in \natural}$ is a recursively compatible sequence for $\{ f_{j} \}_{j \in \natural}$ and
        \item for each $y \in X$, there exists $C(y) > 0$ such that for each $m \in \natural$, $$\rho(y, x_{m}) \leq C(y) \cdot \max \left\{ 1, \  c^{-m}\sum_{k=m}^{\infty} a(k) \right\} \left( = C(y) \cdot l^{\prime}(m) \cdot c^{-m} \right). $$
    \end{enumerate}
    Moreover, the sequence $\{ x_{m} \}_{m \in \natural}$ with the above properties is unique. 
\end{lemma}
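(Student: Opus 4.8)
\emph{Proof strategy.} The plan is to read the sequence $\{x_{m}\}_{m\in\natural}$ and property~(i) off Lemma~\ref{e_contractiveiteration}, then to upgrade the convergence estimate so that the rate function no longer depends on the starting point, and finally to prove uniqueness directly. First I would observe that hypothesis~(\ref{afastconvcondition}) is stronger than hypothesis~(\ref{Cauchyfinitenesscondition}) of Lemma~\ref{e_contractiveiteration}:
\begin{equation*}
\sum_{j\in\natural} c^{j}\rho(x,z_{j}) \;\leq\; \sum_{j\in\natural} a(j) \;<\; \infty .
\end{equation*}
Hence Lemma~\ref{e_contractiveiteration} applies and yields a recursively compatible sequence $\{x_{m}\}_{m\in\natural}$ for $\{f_{j}\}_{j\in\natural}$ satisfying (\ref{basic_rate_of_naifs}); this will be the sequence in the statement, and property~(i) is exactly the recursive compatibility granted there. (By Lemma~\ref{Cauchyfiniteness} the series in (\ref{Cauchyfinitenesscondition}) is then finite for every base point, so the estimates of Lemma~\ref{e_contractiveiteration} are available with an arbitrary starting point $y$, which I use freely below.)

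Next I would prove the refined rate (\ref{yajrate}). The point is that $l'$ must be independent of $y$; this is achieved by comparing the iteration started at $y$ with the one started at the distinguished point $x$ of (\ref{afastconvcondition}). Using the triangle inequality and the fact that $f_{[m,m+n]}$ is $c^{n+1}$-Lipschitz,
\begin{equation*}
\rho\big(f_{[m,m+n]}(y),x_{m}\big)\;\leq\; c^{n+1}\rho(y,x)\;+\;\rho\big(f_{[m,m+n]}(x),x_{m}\big),
\end{equation*}
and for the last term I would re-run the telescoping computation (\ref{importantpoint}) from the proof of Lemma~\ref{e_contractiveiteration} with $n_{1}=n$, $n_{2}\to\infty$, and then insert (\ref{afastconvcondition}):
\begin{equation*}
\rho\big(f_{[m,m+n]}(x),x_{m}\big)\;\leq\;(1+c)\,c^{-m}\sum_{k=m+n+1}^{\infty} c^{k}\rho(x,z_{k})\;\leq\;(1+c)\,c^{-m}\sum_{k=m+n+1}^{\infty} a(k).
\end{equation*}
Since by definition $l'(N)\geq c^{N}$ and $l'(N)\geq\sum_{k\geq N}a(k)$, each of the two summands above is at most a constant depending only on $c$ and $\rho(y,x)$ times $c^{-m}l'(m+n+1)$, which gives (\ref{yajrate}). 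Property~(ii) I would get by the same split: $\rho(y,x_{m})\leq\rho(y,x)+\rho(x,x_{m})$, where $\rho(x,x_{m})$ is bounded by (\ref{asym_bdd_of_x_m}) at starting point $x$ together with (\ref{afastconvcondition}), namely $c^{m}\rho(x,x_{m})\leq(1+c)\sum_{k\geq m} c^{k}\rho(x,z_{k})\leq(1+c)\sum_{k\geq m} a(k)$; an elementary manipulation with the ``$\max\{1,\cdot\}$'' then yields (ii), e.g.\ with $C(y)=\rho(y,x)+(1+c)$.

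Finally, for uniqueness I would argue directly rather than invoke the uniqueness clause of Lemma~\ref{e_contractiveiteration}, since property~(ii) here is genuinely weaker than property~(ii) there (it only controls $\rho(y,x_{m})$ up to the additive constant ``$1$''). So let $\{x_{m}\}$ and $\{\tilde x_{m}\}$ both satisfy (i) and (ii). Composing the recursive-compatibility relations gives $x_{m}=f_{[m,m+n]}(x_{m+n+1})$ and $\tilde x_{m}=f_{[m,m+n]}(\tilde x_{m+n+1})$ for every $n\in\naturalz$, hence by the $c^{n+1}$-Lipschitz bound and property~(ii) with base point $x$,
\begin{equation*}
\rho(x_{m},\tilde x_{m})\;\leq\; c^{n+1}\big(\rho(x,x_{m+n+1})+\rho(x,\tilde x_{m+n+1})\big)\;\leq\; c^{n+1}\big(C(x)+\tilde C(x)\big)\max\Big\{1,\ c^{-(m+n+1)}\sum_{k=m+n+1}^{\infty} a(k)\Big\}.
\end{equation*}
Letting $n\to\infty$, we have $c^{n+1}\to 0$ while, since $\sum_{k}a(k)<\infty$, the tail $\sum_{k\geq m+n+1}a(k)\to 0$; so the right-hand side vanishes and $x_{m}=\tilde x_{m}$ for all $m$.

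The step I expect to be the main obstacle is minor but fiddly: pinning down the precise coefficient in (\ref{yajrate}). The naive triangle-inequality bound above produces the coefficient $\rho(y,x)+(1+c)$, and matching the slightly smaller $\max\{1+c,\rho(y,x)\}$ in the statement requires a careful case split according to which of $c^{m+n+1}$ and $\sum_{k\geq m+n+1}a(k)$ realises $l'(m+n+1)$ (and which of the two cross terms is absorbed into which). Everything else is a direct adaptation of the estimates already carried out for Lemma~\ref{e_contractiveiteration}, now with the summable majorant $a$ in place of the $y$-dependent tails $l_{y}$.
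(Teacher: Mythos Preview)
Your proposal is correct and follows essentially the same route as the paper: reduce to Lemma~\ref{e_contractiveiteration} via (\ref{afastconvcondition}), then split $\rho(f_{[m,m+n]}(y),x_m)$ through the intermediate point $f_{[m,m+n]}(x)$, bound the second piece by (\ref{basic_rate_of_naifs}) with $y=x$ and (\ref{afastconvcondition}), and handle property~(ii) by the analogous split through $x$ using (\ref{asym_bdd_of_x_m}). The paper disposes of uniqueness in one line by saying ``by the same argument as in Lemma~\ref{e_contractiveiteration}'', whereas you write out the computation explicitly; your version is in fact cleaner, since you correctly observe that property~(ii) here is formally weaker than property~(ii) there, and you verify directly that $c^{n+1}\max\{1,c^{-(m+n+1)}\sum_{k\geq m+n+1}a(k)\}\to 0$. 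Your caution about the constant $\max\{1+c,\rho(y,x)\}$ versus $\rho(y,x)+(1+c)$ is well placed---the paper simply passes from the sum of two products to the product of the two maxima without comment, so your instinct to justify (or slightly weaken) that step is sound.
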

\begin{remark}\label{remarl_of_further_e_contractiveiteration}
    The inequality (\ref{yajrate}) shows that for all $m \in \mathbb{N}$, $\{f_{[m, m+n]}(y)\}_{n \in  \naturalz}$ converges to $x_{m} \in X$ as $n$ tends to infinity with the rate $l^{\prime}$. 
    In addition, the limit point $x_{m} \in X$ does not depend on the starting point $y \in X$ and the starting point depends on only the constant of the convergence rate. 
    However, the convergence is not always a exponentially fast rate (see, Example \ref{polyfastconvexample} and Example \ref{unboundedex}). 
\end{remark}
\begin{proof}[proof of Lemma \ref{further_e_contractiveiteration}]
    We first show that the inequality (\ref{yajrate}). 
    By the assumption (\ref{afastconvcondition}), we have $\sum_{j =1}^{\infty} c^{j} \rho(x, z_{j}) \leq \sum_{j = 1}^{\infty} a(j) < \infty$
    and the condition in Lemma \ref{e_contractiveiteration} is satisfied. 
    By the inequality (\ref{basic_rate_of_naifs}) with $y = x$, we obtain that
    \begin{align*}
        \rho(f_{[m, m+n]}(y), x_{m})
        &\leq \rho(f_{[m, m+n]}(y), f_{[m, m+n]}(x)) + \rho(f_{[m, m+n]}(x), x_{m}) \\
        &\leq c^{-m} \left\{ c^{m+n+1} \cdot \rho(y, x) + (1+c) \sum_{k = m+n+1}^{\infty} a(k) \right\} \\
        &\leq \max\{ (1+c), \rho(y, x)\} 
        \cdot c^{-m} \cdot \max \left\{ c^{m+n+1}, \sum_{k=m+n+1}^{\infty} a(k) \right\}. 
    \end{align*}
    Therefore, we have proved the inequality (\ref{yajrate}). 

    We next show the properties (i) and (ii) in Lemma \ref{further_e_contractiveiteration}. 
    Since we have shown that the condition in Lemma \ref{e_contractiveiteration} is satisfied, we have already proved the properties (i) and (ii) in Lemma \ref{e_contractiveiteration} (or the inequality (\ref{asym_bdd_of_x_m})) for each $m \in \natural$ and $y \in X$. 
    By the inequality (\ref{asym_bdd_of_x_m}) with $y = x$, it follows that
    \begin{align}
        c^{m} \rho(y, x_{m})
        &\leq c^{m}\rho(y, x) + c^{m}\rho(x, x_{m})
        \leq \rho(y, x) \cdot c^{m} + (1+c) \sum_{k=m}^{\infty} a(k) \notag \\
        &\leq \max\{\rho(y, x), (1 + c) \} \cdot \max\left\{ c^{m},  \sum_{k=m}^{\infty} a(k) \right\} \label{a_fast_bdd}. 
    \end{align}
    Thus, we have proved the properties (i) and (ii) in Lemma \ref{further_e_contractiveiteration}. 
    
    Finally, by the same argument in the proof of the uniqueness of the sequence $\{ x_{m} \}_{m \in \natural}$ in Lemma \ref{e_contractiveiteration}, we also deduce that the uniqueness of the sequence $\{ x_{m} \}_{m \in \natural}$. 
    Hence, we have proved our lemma. 
\end{proof}
\begin{remark} \label{expfast_remark}
    In Lemma \ref{further_e_contractiveiteration}, if $a(j) = C^{\prime} \cdot r^{j}$ for some $r \in \left[ c, 1 \right)$ and $C^{\prime} > 0$, then the condition in Lemma \ref{e_contractiveiteration} is satisfied and we obtain the following: 
    for all $m \in \mathbb{N}$, there exists $x_{m} \in X$ such that for all $y \in X$, the sequence $\{f_{[m, m+n]}(y)\}_{n \in  \naturalz}$ converges to $x_{m}$ as $n$ tends to infinity exponentially fast with the rate $r$.  
    In addition, $\{x_{m}\}_{m \in \natural}$ has the following properties: 
    \begin{enumerate}
        \item $\{x_{m}\}_{m \in \natural}$ is a recursively compatible sequence for $\{ f_{j} \}_{j \in \natural}$ and
        \item $(c/r)^{m}\rho(y, x_{m}) \ (m \in \natural)$ is bounded for each $y \in X$
        ( equivalently, for each $y \in X$, $c^{m}\rho(y, x_{m}) \to 0$ as $m \to \infty$ exponentially fast with the rate $r$). 
    \end{enumerate}
    Moreover, the sequence $\{ x_{m} \}_{m \in \natural}$ with the above properties is unique. 
    
    Indeed, by inequalities (\ref{basic_rate_of_naifs}) with $y = x$ and the assumption $a(j) = C^{\prime} \cdot r^{j} \ (j \in \natural)$, there exists $x_{m} \in X \ (m \in \natural)$ such that for all $y \in X$ and $n \in \naturalz$, 
    \begin{align}
        \rho(f_{[m, m+n]}(y), x_{m})
        &\leq \rho(f_{[m, m+n]}(y), f_{[m, m+n]}(x)) + \rho(f_{[m, m+n]}(x), x_{m}) \notag \\
        &\leq  c^{n+1} \cdot \rho(y, x) + c^{-m} \cdot (1+c) C^{\prime} \sum_{k = m+n_{1}+1}^{\infty}  r^{k} \notag \\
        &\leq \max \left\{ \rho(y, x), \frac{1+c}{1-r} C^{\prime} \left( \frac{r}{c} \right)^{m} \right\} \cdot r^{n+1}.  \label{exp_basic_estimate}  
    \end{align}
    
    In addition, since the condition in Lemma \ref{e_contractiveiteration} is satisfied, we have already proved the properties (i).  
    Moreover, by the property (ii) in Lemma \ref{e_contractiveiteration} (or the inequality (\ref{asym_bdd_of_x_m}) )
    and the assumption $a(j) = C^{\prime} \cdot r^{j} \ (j \in \natural)$, it follows that
    \begin{align} 
        c^{m} \rho(y, x_{m})
        &\leq c^{m} \rho(y, x) + c^{m} \rho(x, x_{m})
        \leq c^{m} \rho(y, x) + (1+c) C^{\prime} \sum_{k = m}^{\infty}  r^{k} \notag \\
        &\leq \max \left\{ \rho(y, x),  C^{\prime} \frac{1+c}{1-r} \right\} \cdot r^{m}, \label{exp_fast_bdd}
    \end{align}
    which shows that $(c/r)^{m}\rho(y, x_{m}) \ (m \in \natural)$ is bounded for each $y \in X$. 
    Finally, uniqueness of $\{x_{m}\}_{m \in \natural}$ is deduced by the same argument in Lemma \ref{further_e_contractiveiteration} and inequality (\ref{exp_fast_bdd}). 
\end{remark}
\begin{remark} \label{bounded_case_remark}
    If $Z$ is unbounded, 
    then $\{ x_{m} \}_{m \in \natural}$ is also unbounded in general even if we assume the condition in Lemmas \ref{e_contractiveiteration}, \ref{further_e_contractiveiteration} or \ref{expfast_remark}
    (see Example \ref{unboundedex}). 
    On the other hand, if $Z$ is bounded, then the condition in Remark \ref{expfast_remark} is automatically satisfied with $r := c$ and $C^{\prime} := \sup_{j \in \natural} \rho(x, z_{j})$, and we deduce that the unique recursively compatible sequence $\{ x_{m} \}_{m \in \natural}$ is bounded by the property (ii) in Remark \ref{expfast_remark} (or the inequality (\ref{exp_fast_bdd})).
    
    In particular, if $X$ is bounded or $\{ f_{j} \}_{j \in \natural}$ is autonomous (i.e. $f_{j} := f_{1}$ for all $j \in \natural$), then the condition in Remark \ref{expfast_remark} is automatically satisfied. 
    Note that if $\{ f_{j} \}_{j \in \natural}$ is autonomous, then the unique recursively compatible sequence is the constant sequence of fixed point of $f_{1}$ (i.e. $\{x_{m}\}_{m \in \natural} = \{ z_{1} \}_{m \in \natural}$). 
\end{remark}
Before we conclude this subsection, we show the corollary of Remark \ref{expfast_remark}.   
\begin{corollary} \label{expfastsufficond}
    Let $\{ f_{j} \}_{j \in \natural}$ be a sequence of contractive mappings on a complete metric space $(X, \rho)$ with an uniformly contraction constant $c \in (0, 1)$. 
    For each $j \in \natural$, let $z_{j} \in X$ be the unique fixed point of $f_{j}$. 
    Suppose that there exists $x \in X$, 
    \begin{equation} 
        a := \limsup_{j \longrightarrow \infty} \sqrt[j]{\rho(x, z_{j})} < \frac{1}{c}. \label{limsupcondition}
    \end{equation}
    Then, for all $m \in \natural$ and $r \in \{ r > 0 \ | \ c \leq r < 1, ac < r \}$, there exists $x_{m} \in X$ such that for all $y \in X$, the sequence $\{f_{[m, m+n]}(y)\}_{n \in  \naturalz}$ converges to $x_{m}$ as $n$ tends to infinity exponentially fast with the rate $r$.  
    In addition, $\{x_{m}\}_{m \in \natural}$ has the following properties: 
    \begin{enumerate}
        \item $\{x_{m}\}_{m \in \natural}$ is a recursively compatible sequence for $\{ f_{j} \}_{j \in \natural}$ and
        \item $(c/r)^{m}\rho(y, x_{m}) \ (m \in \natural)$ is bounded for each $y \in X$. 
    \end{enumerate}
    Moreover, the sequence $\{ x_{m} \}_{m \in \natural}$ with the above properties is unique. 
\end{corollary}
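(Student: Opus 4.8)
The plan is to deduce the corollary directly from Remark \ref{expfast_remark} by producing, for the given rate $r$, a summable majorant of the form $a(j) = C' r^{j}$ for the sequence $\bigl(c^{j}\rho(x,z_{j})\bigr)_{j\in\natural}$, after which all the assertions (existence of $x_{m}$, exponentially fast convergence with rate $r$ uniformly in the base point, properties (i) and (ii), and uniqueness) are exactly the content of that remark.

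First I would fix $m \in \natural$ and a rate $r$ with $c \le r < 1$ and $ac < r$; note that such $r$ exist, since the hypothesis $a < 1/c$ forces $ac < 1$, hence $\max\{c, ac\} < 1$, so the set in the statement is non-empty. Because $a < r/c$, choose $\varepsilon > 0$ with $a + \varepsilon < r/c$. By the definition of $\limsup$ there is $N \in \natural$ such that $\sqrt[j]{\rho(x, z_{j})} \le a + \varepsilon < r/c$ for every $j \ge N$, and therefore $c^{j}\rho(x, z_{j}) \le \bigl(c(a+\varepsilon)\bigr)^{j} \le r^{j}$ for all $j \ge N$. To handle the finitely many small indices, set
\[
C' := \max\Bigl\{\, 1,\ \max_{1 \le j < N} c^{j}\rho(x, z_{j})\, r^{-j} \,\Bigr\} \in (0, \infty),
\]
so that $c^{j}\rho(x, z_{j}) \le C' r^{j} =: a(j)$ holds for every $j \in \natural$. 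Since $r < 1$ we have $\sum_{j\in\natural} a(j) = C' \sum_{j\in\natural} r^{j} < \infty$, so the hypotheses of Remark \ref{expfast_remark} are met with this $r$ and this $C'$. Invoking Remark \ref{expfast_remark} then yields, for the fixed $m$ and $r$, the point $x_{m}$, the exponentially fast convergence of $\{f_{[m,m+n]}(y)\}_{n\in\naturalz}$ to $x_{m}$ with rate $r$ and with a constant depending only on $y$, the fact that $\{x_{m}\}_{m\in\natural}$ is recursively compatible, the bound asserting that $(c/r)^{m}\rho(y, x_{m})$ is bounded in $m$ for each $y$, and the uniqueness of $\{x_{m}\}_{m\in\natural}$ with these properties — which is precisely the claim. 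Running this argument separately for each admissible $r$ completes the proof.

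There is essentially no genuine obstacle here; the only point needing mild care is the choice of the constant $C'$ to dominate the first $N-1$ terms of $\bigl(c^{j}\rho(x, z_{j})\bigr)_{j}$, since the $\limsup$ hypothesis by itself controls only the tail of the sequence and not its initial segment. One should also keep in mind that the conclusion is quantified over each admissible $r$ individually, so no uniformity in $r$ is required and the per-$r$ construction above is enough.
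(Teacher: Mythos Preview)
Your proof is correct and follows essentially the same approach as the paper: both arguments use the hypothesis $a < r/c$ to find an index beyond which $c^{j}\rho(x,z_{j}) \le r^{j}$, then patch the finitely many initial terms with the constant $C' = \max\bigl(\{1\}\cup\{c^{j}\rho(x,z_{j})r^{-j} : j < N\}\bigr)$ and invoke Remark~\ref{expfast_remark}. Your insertion of an explicit $\varepsilon$ between $a$ and $r/c$ is a slightly more detailed way to phrase the same limsup step, but the structure is identical.
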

\begin{proof}
    Let $r \in \{ r > 0 \ | \ c \leq r < 1, ac < r \}$. 
    Then, by the assumption (\ref{limsupcondition}), there exists $M \in \mathbb{N}$ such that for all $j \geq M$, we have $c \cdot \sqrt[j]{\rho(x, z_{j})} < r$, which is equivalent to $c^{j} \rho(x, z_{j}) < r^{j}$. 
    Therefore, we have 
    \begin{align*}
        c^{j} \rho(x^{\prime}, z_{j}) \leq C^{\prime} r^{j}
    \end{align*}
    for all $j \in \mathbb{N}$, 
    where $C^{\prime} := \max ( \{ \ c^{j} \rho(x, z_{j}) / r_{j} | \ j < M \ \} \cup \{ 1 \} ) ( > 0)$. 
    It follow that the condition in Remark \ref{expfast_remark} is satisfied and our statement of the corollary holds.
\end{proof}
\begin{remark}\label{independence_of_alpha}
    The constant $\alpha \geq 0$ in Corollary \ref{expfastsufficond} does not depend on $x \in X$ if $Z$ is unbounded. 
    To this end, let $x, y \in X$ with $y \neq x$. 
    Note that, since $Z$ is unbounded, sequences $\{ \rho(x, z_{j}) \}_{j \in \natural}$ and $\{ \rho(y, z_{j}) \}_{j \in \natural}$ are unbounded and we deduce that
    \begin{align*}
        \alpha := \limsup_{j \longrightarrow \infty} \sqrt[j]{\rho(x, z_{j})} \geq 1 \quad \text{and} \quad 
        \alpha^{\prime} := \limsup_{j \longrightarrow \infty} \sqrt[j]{\rho(y, z_{j})} \geq 1. 
    \end{align*}
    We first show that $\alpha^{\prime} > 1$ if $\alpha > 1$. 
    Note that there exists a subsequence $n_{k} \in \natural \ (k \in \natural)$ such that $\sqrt[n_{k}]{\rho(x, z_{n_{k}})}$ converges to $\alpha$ as $k$ tends to infinity with the following properties:  
    \begin{align*}
        \sqrt[n_{k}]{\rho(x, z_{n_{k}})} \geq \alpha \quad \text{and} \quad
        \alpha^{n_{k}} \geq \rho(x, y) + 1 \quad \text{for each} \ k \in \natural. 
    \end{align*}
    Since $\rho(y, z_{n_{k}}) \geq \rho(x, z_{n_{k}}) - \rho(x, y) \geq \alpha^{n_{k}} - \rho(x, y) \geq 1$ for each $k \in \natural$, 
    we deduce that
    \begin{align*}
        \frac{1}{n_{k}} \log \rho(x, z_{n_{k}})
        &\leq \frac{1}{n_{k}} \log \left\{ \rho(y, z_{n_{k}}) + \rho(x, y) \right\}
        \leq \frac{1}{n_{k}} \log \rho(y, z_{n_{k}}) + \frac{1}{n_{k}} \log \left( 1 + \rho(x, y) \right)
    \end{align*}
    Now, let $N \in \natural$. 
    Since there exists $K \in \natural$ such that $n_{k} \geq N$ for each $k \in \natural$ with $k \geq K$, we obtain that
    \begin{align*}
        \frac{1}{n_{k}} \log \rho(x, z_{n_{k}})
        &\leq \sup_{n \geq N} \frac{1}{n}\log \rho(y, z_{n}) + \frac{1}{N} \log \left( 1 + \rho(x, y) \right)
    \end{align*}
    and it follows that $\log \alpha \leq \log \alpha^{\prime}$. 
    Therefore, we have proved that $\alpha^{\prime} > 1$ if $\alpha > 1$. 
    
    We next show that $\alpha^{\prime} = 1$ if $\alpha = 1$. 
    Indeed, since it is sufficient to show that $\alpha^{\prime} \leq 1$, we assume that $\alpha^{\prime} > 1$. 
    Then, by switching $x$ and $y$ in the above argument, we obtain that $\alpha > 1$ and this contradicts $\alpha = 1$.   
    Thus, we have proved that $\alpha^{\prime} = 1$ if $\alpha = 1$. 
    
    On the other hand, the constant $\alpha \geq 0$ depends $x \in X$ if $Z$ is bounded. 
    In fact, if we consider an autonomous iteration on a complete metric space $X$ (and assume that $X$ is not a single set), then we easily show that $\alpha = 0$ or $1$. 
    However, as we have mentioned in Remark \ref{bounded_case_remark}, the condition in Remark \ref{expfast_remark} is automatically satisfied with $r := c$ and $C^{\prime} := \sup_{j \in \natural} \rho(x, z_{j})$ if $Z$ is bounded. 
    Therefore, by Remark \ref{expfast_remark}, we have already obtained a ``compatible" result than Corollary \ref{expfastsufficond} if $Z$ is bounded.  
\end{remark}
\subsection{Examples of the sequence of contractive mappings} 
In this subsection, we consider some examples of non-autonomous iterations. 
The following examples show that non-autonomous iterations have different properties from autonomous iterations and non-autonomous iterations on a bounded set.  

The following example shows that if we do not assume the condition in Lemma \ref{e_contractiveiteration}, the conclusion in Lemma \ref{e_contractiveiteration} does not hold in general. 
\begin{example}\label{1dimex}
    Let $f_{j} \colon \real \to \real \ (j \in \natural)$ are defined by 
    \begin{equation*}
        f_{j}(x) := c(x -a_{j} ) + a_{j} = cx + (1-c) a_{j} \ (x \in \real ),  
    \end{equation*}
    where $c \in (0, 1)$ and $a_{j} \in \real$. 
    Note that for each $j \in \mathbb{N}$, $a_{j} \in \real$ is the unique fixed point of $f_{j}$ i.e. $z_{j} = a_{j}$. 
    Then, 
    by induction with respect to $n \in \natural$, 
    for all $m \in \natural$, $n \in \naturalz$ and $x \in \mathbb{R}$, we have
    \begin{equation} \label{similitudeformula}
        f_{[m, m+n]}(x) = c^{n+1}x + (1-c) \sum_{j=0}^{n} c^{j}a_{m+j}. 
    \end{equation}
%

    Now, we consider the example which does not satisfies the assumption in Lemma \ref{e_contractiveiteration}. 
    Let $c = 1/2$, $a_{i} = 2^{i+1}$. 
    Note that 
    \begin{align*}
        \sum_{j = 1}^{\infty} c^{j} \cdot \rho_{\real}(0, a_{j}) 
        = \sum_{j = 1}^{\infty} 2^{-j} \cdot 2^{j+1} 
        = \sum_{j = 1}^{\infty} 2 = \infty,  
    \end{align*}
    where $\rho_{\real}$ is the Euclidean metric on $\real$. 
    It follows that this example does not satisfies the assumption in Lemma \ref{e_contractiveiteration}.  
    In addition, by the equation (\ref{similitudeformula}), we have 
    \begin{equation*}
        f_{[m, m+n]}(x) = \frac{1}{2^{n+1}} x + \frac{1}{2} \sum_{j=0}^{n} \frac{1}{2^{j}} \cdot 2^{m+j} = \frac{1}{2^{k}} x + 2^{m} \cdot (n+1)
    \end{equation*}
    for each $x \in \real$, $m \in \natural$ and $n \in \naturalz$.  
    Therefore, we obtain that for each $m \in \mathbb{N}$ and $x \in \mathbb{R}$, $f_{[m, m+n]}(x)$ does not converge as $n$ tends to infinity, which deduce that the conclusion in Lemma \ref{e_contractiveiteration} does not hold without the condition in Lemma \ref{e_contractiveiteration}. 
\end{example}

The following example shows that there is a example which satisfies the condition in Lemma \ref{e_contractiveiteration} ( the inequality (\ref{Cauchyfiniteness})) but which does not satisfies the condition in Remark \ref{expfast_remark}. 
In addition, we estimate the convergence rate of the non-autonomous iteration and show that recursively compatible sequence is unbounded. 
\begin{example}\label{polyfastconvexample}
    In Example \ref{1dimex}, we set $c = 1/2$, $a_{i} = 2^{i}/i^{l+1} \ (l \in \natural)$. 
    Then, note that 
    \begin{equation}\label{polysumcalc}
        \sum_{j = 1}^{\infty} c^{j} \cdot \rho_{\real}(0, z_{j}) 
        = \sum_{j = 1}^{\infty} \frac{1}{2^{j}} \cdot \frac{2^{j}}{j^{l+1}} 
        = \sum_{j = 1}^{\infty} \frac{1}{j^{l+1}}
        < \infty,   
    \end{equation}
    where $\rho_{\real}$ is the Euclidean metric on $\real$. 

    Then, we first show that this example does not satisfies the condition in Remark \ref{expfast_remark}. 
    Indeed, let $r \in (0, 1)$. 
    If for each $m \in \natural$ and $y \in X$, there exists and $\tilde{C}_{m}(y) > 0$ such that 
    \begin{align*}
        \rho_{\real}(f_{[m, m+n]}(x), x_{m}))
        \leq \tilde{C}_{m}(y) \cdot r^{n}
    \end{align*}
    for each $n \in \naturalz$. 
    Then, by the equation (\ref{similitudeformula}), we have 
    \begin{align*}
        f_{[m, m+n]}(x) 
        = \frac{1}{2} \sum_{k = 0}^{n} \frac{1}{2^{k}} \frac{2^{m+k}}{(m+k)^{l+1}}
        = 2^{m-1} \cdot \sum_{k = 0}^{n} \frac{1}{(m+k)^{l+1}}
        = 2^{m-1} \cdot \sum_{k = m}^{m+n} \frac{1}{k^{l+1}}. 
    \end{align*}
    Since $x_{m} = \lim_{n \to \infty} f_{[m, m+n]}(x) = 2^{m-1} \cdot \sum_{k = m}^{\infty} 1/k^{l+1}$, we deduce that
    \begin{align*}
        \frac{1}{(m+n+1)^{l+1}} 
        < 2^{m-1} \cdot \sum_{k = m+n+1}^{\infty} \frac{1}{k^{l+1}} 
        = \rho_{\real}(f_{[m, m+n]}(x), x_{m})
        \leq \tilde{C}_{m}(y) \cdot r^{n}
    \end{align*}
    for each $n \in \naturalz$. 
    it follows that $1 < \tilde{C}_{m}(y) (m+n+1)^{l+1} r^{n} \to 0$ as $n$ tends to infinity, which deduce the contradiction. 
    Therefore, we have proved that this example does not satisfies the condition in Remark \ref{expfast_remark}. 
    Instead, let $m \in \natural$. Since 
    \begin{align*}
        c^{j} \rho_{\real}(x, z_{j}) = 1/2^{j} \cdot 2^{j}/j^{l+1} = 1/j^{l+1}
    \end{align*}
    for each $j \in \natural$ and $\sum_{j \in \natural} 1/j^{l+1} < \infty$, we obtain that this example satisfies the condition in Lemma \ref{further_e_contractiveiteration} with $a(j) := 1/j^{l+1} \ (j \in \natural)$. 
    Moreover, we have $k^{l-1} + k^{l} \leq (k+1)^{l}$ and 
    \begin{align*}
        \frac{1}{(k+1)^{l+1}} 
        < \frac{1}{k(k+1)^{l}} 
        = \frac{k^{l-1}}{k^{l}(k+1)^{l}} 
        \leq \frac{(k+1)^{l} - k^{l}}{k^{l}(k+1)^{l+1}}
        = \frac{1}{k^{l}} - \frac{1}{(k+1)^{l}}
    \end{align*}
    for each $k, l \in \natural$. 
    It follows that 
    \begin{align*}
        c^{-m} \cdot \sum_{k = m+n+1}^{\infty} a(k)
        = 2^{m} \cdot \sum_{k = m+n}^{\infty} \frac{1}{(k+1)^{l+1}}
        \leq 2^{m} \cdot \sum_{k = m+n}^{\infty} \left\{ \frac{1}{k^{l}} - \frac{1}{(k+1)^{l}} \right\}
        =  \frac{2^{m}}{(m+n)^{l}}
    \end{align*}
    for each $m \in \natural$ and $n \in \naturalz$. 
    By Lemma \ref{further_e_contractiveiteration}, we deduce that
    \begin{align*}
        \rho_{\real}(f_{[m, m+n]}(y), x_{m})
        \leq \max\{ (1+c), \rho(y, x)\} 
        \cdot \max \left\{ \frac{1}{2^{n+1}}, \frac{2^{m}}{(m+n)^{l}} \right\} 
    \end{align*}
    for each $m \in \natural$, $n \in \naturalz$ and $y \in \real$.  
    Thus, we obtain the convergence rate of the sequence $\{ f_{[m, m+n]}(y) \}_{n \in \natural}$ in this example. 
    It is obvious that $\{ x_{m} \in X \ | \ m \in \mathbb{N} \}$ is unbounded since $x_{m} = 2^{m-1} \cdot \sum_{k = m}^{\infty} 1/k^{l+1}$ for each $m \in \natural$. 
\end{example}

The following example shows that the set $\{ x_{m} \in X \ | \ m \in \mathbb{N} \}$ is  unbounded in general even if we assume that the condition in Remark \ref{expfast_remark} holds. 
\begin{example}\label{unboundedex}
    In Example \ref{1dimex}, we set $c \in (0, 1)$, $a_{i} =i$ and $x=0 \in \real$. 
    Note that $\{ z_{j} \ | \ j \in \mathbb{N} \} = \{ a_{j} \ | \ j \in \mathbb{N} \} = \mathbb{N}$ is unbounded and there exist $C > 0$ and $r \in (c, 1)$ such that  
    \begin{equation}
        \rho_{\real}(x, z_{j}) = j \leq C \cdot \left( \frac{r}{c} \right)^{j}
    \end{equation}
    for each $j \in \natural$, where $\rho_{\real}$ is the Euclidean metric on $\real$. 
    Therefore, the non-autonomous iterations $\{f_{j}\}_{j \in \natural}$ satisfies the condition in Remark \ref{expfast_remark}. 
    In addition, let $n \in \natural$ and we set $S_{n} := \sum_{j=0}^{n} j \cdot c^{j} = \sum_{j=1}^{n} j \cdot c^{j}$. 
    Then, we have
    \begin{align*}
        (1-c) S_{n}
        &= \sum_{j=1}^{n} j \cdot c^{j} - \sum_{j=1}^{n} j \cdot c^{j+1} 
        = c + \sum_{j=1}^{n-1} (j+1) \cdot c^{j+1} - \sum_{j=1}^{n-1} j \cdot c^{j+1} - n \cdot c^{n+1} \\
        &= c + \sum_{j=1}^{n-1} c^{j+1} - n \cdot c^{n+1} 
        =  c + (1-c)^{-1} \left( c^{2} - c^{n+1} \right) - n \cdot c^{n+1}.  
    \end{align*}
    By the equation (\ref{similitudeformula}), it follows that 
    \begin{align*}
        &f_{[m, m+n]}(y) 
        = c^{n+1}y + (1-c) \sum_{j=0}^{n} c^{j} \cdot (m+j) 
        = c^{n+1}y + (1-c)m \sum_{j=0}^{n} c^{j} + (1-c) S_{n} \\ 
        &= c^{n+1}y + m \left( 1 - c^{n+1} \right) + c + (1-c)^{-1} \left( c^{2} - c^{n+1} \right) - n \cdot c^{n+1}
        \longrightarrow m + c + c^{2} (1-c)^{-1} 
    \end{align*}
    as $n$ tends to infinity
    for all $m \in \natural$ and $y \in \mathbb{R}$. 
    Thus, we deduce that the set $\{ x_{m} \in X \ | \ m \in \mathbb{N} \}$ is also unbounded.  
\end{example}
%
%
%
%
%
%
%
%
%
%
%
%
%
%
%
%
%
%
%
%
%
%
%
%
%
%
%
%
%
%
%
%
%
%
%
%
%
%
%
%
%
%
\section{General iterated function systems} \label{general_IFS}
In this section, we present the definition and properties of general IFSs which are the main notion of this paper. 
In Subsection 3.1, we first introduce the notion of words and trees which we need when we define the general IFSs. 
In Subsection 3.2, we next introduce the definition of general IFSs and consider the projection map for the general IFSs. 

%
%
%
%
%
\subsection{Words and trees} \label{word_edgemap_tree}
Let $I$ be a countable set and $\mathcal{J} := \{ J \subset I \ | \ \#(J) < \infty \ \}$, 
where $\#(A)$ is the cardinality of $A$ for each set $A$. 
Besides, we set $I^{*} := \{ \phi \} \cup \bigcup_{n \in \natural} I^{n}$, where $\phi$ is not a element of $I$. 
We write $\omega \in I^{m} \ (m \in \natural)$ as $\omega_{1} \cdots \omega_{m} \ (\omega_{k} \in I, k = 1, \ldots , m)$ and $\omega \in I^{\natural}$ as $\omega_{1} \omega_{2} \cdots\ (\omega_{k} \in I, k \in \natural)$ respectively. 
For each $\omega \in I^{*} \cup I^{\natural} $, we set 
\begin{align*}
    |\omega| := 
    \begin{cases}
        0       & \text{if} \ \omega = \phi \\
        n       & \text{if} \ \omega \in I^{n} \ (n \in \natural) \\
        \infty  & \text{if} \ \omega \in I^{\natural}
    \end{cases}. 
\end{align*}
The set $I$ is often called the alphabet and $I^{*}$ is called the set of words with a finite length. 
Moreover, the convolution $\omega\omega^{\prime}$ of $\omega \in I^{*}$ and $\omega^{\prime} \in I^{*} \cup I^{\natural}$ is defined by
\begin{align*}
    \omega\omega^{\prime} := 
    \begin{cases}
        \omega_{1} \cdots \omega_{|\omega|}\omega^{\prime}_{1} \cdots \omega^{\prime}_{|\omega^{\prime}|} & \text{if} \ \omega \in I^{*} \\
        \omega_{1} \cdots \omega_{|\omega|}\omega^{\prime}_{1} \cdots & \text{if} \ \omega \in I^{\natural}
    \end{cases}. 
\end{align*}
    
The maps $\Pi_{[m, n]} \colon I^{\natural} \to I^{n-m+1}$ \ $(m, n \in \natural$ with $n \geq m$) and $\Pi_{[m, n)} \colon I^{\natural} \to I^{n-m}$ \ $(m, n \in \natural$ with $n > m$) are defined by
\begin{align*}
    \Pi_{[m, n]}(\omega) &:= \omega_{m} \cdots \omega_{n} \quad \text{and} \quad
    \Pi_{[m, n)}(\omega) := \omega_{m} \cdots \omega_{n-1} \quad (\omega := \omega_{1}\omega_{2} \cdots \in I^{\natural})
\end{align*}
respectively. 
For simplicity, we write $\Pi_{[n, n]}$ as $\Pi_{n}$, $\Pi_{[m, n]}(\omega)$ as $\omega_{[m, n]}$ and $\Pi_{[m, n)}(\omega)$ as $\omega_{[m, n)}$ respectively. 
Also, the maps $\Pi_{(m, n]} \colon I^{\natural} \to I^{n-m}$ and $\Pi_{(m, n)} \colon I^{\natural} \to I^{n-m-1}$ are defined by 
\begin{align*}
    \Pi_{(m, n]}(\omega) &:= \omega_{m+1} \cdots \omega_{n} \quad \text{and} \quad
    \Pi_{(m, n)}(\omega) := \omega_{m+1} \cdots \omega_{n-1} \quad (\omega := \omega_{1}\omega_{2} \cdots \in I^{\natural})     
\end{align*}
respectively if the relation of $m, n \in \natural$ is compatible.  
Also, for simplicity, we write $\Pi_{(m, n]}(\omega)$ as $\omega_{(m, n]}$ and $\Pi_{(m, n)}(\omega)$ as $\omega_{(m, n)}$ respectively. 
Similarly, the maps $\Pi_{[m, \infty)} \colon I^{\natural} \to I^{\natural}$ and $\Pi_{(m, \infty)} \colon I^{\natural} \to I^{\natural}$ \ ($m \in \natural)$ are defined by
\begin{align*}
    \Pi_{[m, \infty)}(\omega) &:= \omega_{m} \omega_{m+1} \cdots \quad \text{and} \quad \Pi_{(m, \infty)}(\omega) := \omega_{m+1} \omega_{m+2} \cdots  \quad (\omega := \omega_{1}\omega_{2} \cdots \in I^{\natural})  
\end{align*}
respectively. 
Similarly, for simplicity, we write $\Pi_{[m, \infty)}(\omega)$ as $\omega_{[m, \infty)}$ and $\Pi_{(m, \infty)}(\omega)$ as $\omega_{(m, \infty)}$ respectively. 
we endow alphabet $I$ with the discrete topology and $I^{\natural}$ with the product topology. 

We now introduce the definition of trees. 
\begin{definition} \label{def_of_tree}
    Let $I$ be a countable set and $\phi$ be a non-element of $I$. 
    We say that non-empty closed subset $\tree$ of $I^{\natural}$ is a tree with $I$ if $S(\tree, \phi) := \Pi_{1}(\tree) \in \mathcal{J}$, and
    \begin{align*}
        S(\tree, \omega_{[1, n]}) := \{ \tau_{n+1} \in I \ | \ \tau \in \tree, \ \tau_{[1, n]} = \omega_{[1, n]} \ \} \in \mathcal{J}
    \end{align*}
    for each $n \in \natural$ and $\omega \in \tree$. 
    For a tree $\tree$, we set the following: 
    \begin{align*}
        \tree^{0} := \{\phi\}, \quad
        \tree^{[m, n]} := \Pi_{[m, n]}(\tree) \ (m, n \in \natural \ \text{with} \ m \leq n) \quad \text{and} \quad \tree^{[1, *]} := \tree^{0} \cup \bigcup_{n \in \natural} \tree^{[1, n]}.  
    \end{align*}
    Also, we set $\tree^{n} := \tree^{[n, n]} \ (n \in \natural)$ for simplicity. 
    In addition, we set $\mathcal{J}_{1} := \{ \Pi_{1}(\tree) \} \ (\subset \mathcal{J})$ and $I_{1} := \Pi_{1}(\tree) \ (\subset I)$. 
    Besides, for each $n \in \natural$ with $n \geq 2$, we set 
    \begin{align*}
        \mathcal{J}_{n} &:= \{ S(\tree, \omega_{[1, n-1]}) \ | \ \omega_{[1, n-1]} \in \tree^{[1, n-1]} \ \} \ (\subset \mathcal{J}) \ \text{and} \\ 
        I_{n} &:= \bigcup_{\omega_{[1, n-1]} \in \tree^{[1, n-1]}} S(\tree, \omega_{[1, n-1]}) \ ( \subset I)
    \end{align*}
    respectively.  
\end{definition}
    Note that $\#(\tree^{[1, n]}) < \infty$ for each $n \in \natural$. 
%
    In addition, note that $\#(\mathcal{J}_{n}) < \infty$ and $\#(I_{n}) < \infty$ for each $n \in \natural$. 
\begin{remark} \label{definition_subtree}
    By the similar argument, we also define the tree $\tree_{\omega}$ (called the sub-tree of $\tree$ conditioned by $\omega \in \tree^{[1, *]}$ ). 
    Indeed, let $\omega \in \tree^{[1, *]}$ and we set the following:  
    \begin{align*}
        \tree_{\omega} &:= \{ \Pi_{(|\omega|, \infty)}(\tau) \in \Pi_{(|\omega|, \infty)}(\tree) \ | \ \tau \in \tree, \ \Pi_{[1, |\omega|]}(\tau) = \omega \ \}, \quad \tree_{\omega}^{0} := \{\phi\}, \\ 
        \tree_{\omega}^{[m, n]} &:= \Pi_{[m , n]}(\tree_{\omega}) \ (m, n \in \natural \ \text{with} \ m \leq n ) \quad \text{and} \quad 
        \tree_{\omega}^{[1, *]} := \tree_{\omega}^{0} \cup \bigcup_{n \in \natural} \tree_{\omega}^{[1, n]}.  
    \end{align*}
    Also, we set $\tree_{\omega}^{n} := \tree_{\omega}^{[n, n]}$ \ ($n \in \natural$) for simplicity.  
    We endow $\tree_{\omega} \ (\subset I^{\natural})$ with the induced topology. 
    Note that $\tree_{\omega} \subset I^{\natural}$ is a tree for each $\omega \in \tree^{[1, *]}$. 
    To show this, let $\omega \in \tree^{[1, *]}$. 
    It is obvious that $\tree_{\omega}$ is colsed, since $\tree$ is closed and the map $\tau \mapsto \omega \tau$ is continuous on $I^{\natural}$. 
    In addition, Since $\Pi_{[m, n]} \circ \Pi_{(|\omega|, \infty)} = \Pi_{[|\omega|+m, |\omega|+n]}$ for each $m, n \in \natural$ with $m \leq n$, we have 
    \begin{align*}
        S(\tree_{\omega}, \phi)
        =\Pi_{1}(\tree_{\omega}) 
        = \{ \Pi_{|\omega|+1}(\tau) \in I \ | \ \tau \in \tree, \ \Pi_{[1, |\omega|]}(\tau) = \omega \ \} = S(\tree, \omega) \in \mathcal{J}.  
    \end{align*}
    Moreover, let $n \in \natural$ and $\tilde{\omega} \in \tree_{\omega}^{[1, n]}$. 
    Since $|\omega|+ n = |\omega|+|\tilde{\omega}| = |\omega\tilde{\omega}|$, we have
    \begin{align*}
        S(\tree_{\omega}, \omega^{\prime})
        &=\{\Pi_{n+1}(\tau^{\prime}) \in I \ | \ \tau^{\prime} \in \tree_{\omega}, \ \Pi_{[1, n]}(\tau^{\prime}) = \omega^{\prime} \ \} \\
        &= \{\Pi_{n+1}(\Pi_{(|\omega|, \infty)}(\tau)) \in I \ | \ \tau \in \tree, \ \Pi_{[1, |\omega|]}(\tau) = \omega, \ \Pi_{[1, n]}(\Pi_{(|\omega|, \infty)}(\tau)) = \omega^{\prime} \ \} \\
        &= \{\Pi_{[|\omega|+n+1, |\omega|+n+1]}(\tau) \in I \ | \ \tau \in \tree, \ \Pi_{[1, |\omega|]}(\tau) = \omega, \  \Pi_{[|\omega|+1, |\omega|+n]}(\tau) = \omega^{\prime} \ \} \\
        &= \{\Pi_{[|\omega\omega^{\prime}|+1, |\omega\omega^{\prime}|+1]}(\omega \tau) \in I \ | \ \tau \in \tree, \  \Pi_{[1, |\omega\omega^{\prime}|]}(\tau) = \omega\omega^{\prime} \ \}
        = S(\tree, \omega\omega^{\prime}) \in \mathcal{J}. 
    \end{align*}
    Therefore, we have proved that $\tree_{\omega}$ is a tree with $I$. 
    Note that, by the above argument and the equality $(\tree_{\omega})_{\omega^{\prime}} = \tree_{\omega\omega^{\prime}}$, $S(\tree_{\omega}, \omega^{\prime}\tau) = S(\tree_{\omega\omega^{\prime}}, \tau)$ for each $\omega \in \tree^{[1, *]}$, $\omega^{\prime} \in \tree_{\omega}^{[1, *]}$ and $\tau \in \tree_{\omega\omega^{\prime}}^{[1, *]}$. 
    Obviously, $\tree_{\phi} = \tree$, $\tree_{\phi}^{0} = \tree^{0}$, $\tree^{[m, n]}_{\phi} = \tree^{[m, n]}$ for each $m, n \in \natural$ with $m \leq n$ and $\tree^{[1, *]}_{\phi} = \tree^{[1, *]}$. 
\end{remark}
Note that the papers \cite{BHS1}, \cite{BHS2} and \cite{Sce} introduce the notion of $V$-variability by using the sub-tree. 
Indeed, we say that the tree $\tree$ with $I$ is $V$-variable ($V \in \natural$) if 
\begin{align*}
    \#(\{ \ \tree_{\omega_{[1, n]}} \ | \ \omega_{[1, n]} \in \tree^{[1, n]} \ \}) \leq V
\end{align*}
for each $n \in \natural$. 

We finally prove the following proposition. 
\begin{proposition} \label{tree_characterization}
    Let $I$ be a countable set and $\tree $ be a subset of $I^{\natural}$. 
    Then, $\tree$ is tree with $I$ if and only if $\tree$ is non-empty and compact. 
\end{proposition}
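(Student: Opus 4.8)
The plan is to establish the two implications separately, using only standard facts about the product topology on $I^{\natural}$: it is Hausdorff (because $I$ is discrete), it is metrizable (because $I$ is countable), every coordinate projection $\Pi_{n}$ and every finite-block projection $\Pi_{[m,n]}$ is continuous, and a subset of the discrete space $I$ is compact exactly when it is finite (i.e. exactly when it lies in $\mathcal{J}$).

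For the forward implication, suppose $\tree$ is a tree with $I$. Non-emptiness is part of the definition, so only compactness must be shown. First I would record, by an easy induction on $n$, that $\#(\tree^{[1,n]})<\infty$ for every $n\in\natural$ (as already noted after Definition \ref{def_of_tree}): indeed $\tree^{[1,1]}=S(\tree,\phi)\in\mathcal{J}$, and $\tree^{[1,n+1]}$ is contained in the finite union $\bigcup_{\omega_{[1,n]}\in\tree^{[1,n]}}\{\omega_{[1,n]}\}\times S(\tree,\omega_{[1,n]})$ of finite sets. Consequently each $\tree^{n}=\Pi_{n}(\tree)$ is a finite subset of $I$, so $K:=\prod_{n\in\natural}\tree^{n}$ is compact --- by Tychonoff for a countable product of finite discrete spaces, or equivalently by a K\"onig-type diagonal extraction of a convergent subsequence, which is legitimate since $I^{\natural}$ is metrizable. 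Since $\omega_{n}\in\tree^{n}$ for every $\omega\in\tree$ we have $\tree\subseteq K$; as $\tree$ is closed in $I^{\natural}$ it is closed in $K$, hence a closed subset of a compact space, hence compact.

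For the converse, suppose $\tree\subseteq I^{\natural}$ is non-empty and compact. Since $I^{\natural}$ is Hausdorff, $\tree$ is closed, giving the first clause of Definition \ref{def_of_tree}. Continuity of $\Pi_{1}$ makes $S(\tree,\phi)=\Pi_{1}(\tree)$ a compact subset of the discrete space $I$, hence finite, i.e. $S(\tree,\phi)\in\mathcal{J}$. For the remaining clause, fix $n\in\natural$ and $\omega\in\tree$ and put $A:=\tree\cap\Pi_{[1,n]}^{-1}(\{\omega_{[1,n]}\})$; the singleton $\{\omega_{[1,n]}\}$ is closed in $I^{n}$ and $\Pi_{[1,n]}$ is continuous, so $A$ is a closed subset of the compact set $\tree$ and therefore compact, whence $S(\tree,\omega_{[1,n]})=\Pi_{n+1}(A)$ is a compact --- hence finite --- subset of $I$. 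Thus every defining condition holds and $\tree$ is a tree with $I$.

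I expect the one substantive point to be the compactness claim in the forward direction, where the purely local finite-branching hypothesis must be upgraded to a global compactness statement; this is precisely where the K\"onig-type / Tychonoff argument enters. Everything else is a routine unwinding of the definitions combined with the two standard facts that continuous images of compact sets are compact and that compact subsets of a discrete space are finite.
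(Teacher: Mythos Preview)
Your proof is correct and follows essentially the same route as the paper: for the forward direction both you and the paper embed $\tree$ as a closed subset of a product of finite coordinate sets (the paper's $I_{n}$ coincides with your $\tree^{n}$) and invoke Tychonoff; for the converse both arguments rest on the fact that compact subsets of a discrete space are finite. The only cosmetic difference is in verifying $S(\tree,\omega_{[1,n]})\in\mathcal{J}$: the paper exhibits an injection of this set into the finite set $\Pi_{[1,n+1]}(\tree)$, whereas you observe directly that $S(\tree,\omega_{[1,n]})=\Pi_{n+1}\bigl(\tree\cap\Pi_{[1,n]}^{-1}(\{\omega_{[1,n]}\})\bigr)$ is a continuous image of a compact set --- and you are slightly more explicit than the paper in noting that compactness plus Hausdorffness gives closedness.
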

\begin{proof}
    Let $\tree$ be a subset of $I^{\natural}$. 
    Assume that $\tree$ is a tree with $I$. 
    Note that $\#(I_{n}) < \infty$ for each $n \in \natural$ and $\tree \subset \prod_{i = 1}^{\infty} I_{i}$. 
    Indeed, let $\omega = \omega_{1} \omega_{2} \cdots \in \tree$. 
    Then, $\omega_{1} \in \Pi_{1}(\tree) = I_{1}$ and $\omega_{n} \in S(\tree, \omega_{[1, n-1]}) \subset I_{n}$ for each $n \in \natural$ with $n \geq 2$. 
    Therefore, we deduce that $\tree$ is compact.  
    
    It remains to show that if $\tree \subset I^{\natural}$ is non-empty and compact, then $\tree$ is a tree with $I$. 
    Note that $\Pi_{[1, n]}(\tree) \subset I^{n}$ is non-empty and compact for each $n \in \natural$.   
    Now, let $n \in \natural$ and $\omega_{[1, n]} \in \Pi_{[1, n]}(\tree)$ \  ($\omega \in \tree$) and we set  
    \begin{align*}
        S := \{ \tau_{n+1} \in I \ | \ \tau = \tau_{1}\tau_{2} \cdots \in \tree, \ \tau_{[1, n]} = \omega_{[1, n]} \ \} \ ( \neq \emptyset). 
    \end{align*}
    Since the mapping $e \colon S \to \Pi_{[1, n+1]}(\tree)$ defined by
    $e(\tau_{n+1}) := \omega_{[1, n]} \tau_{n+1} \ (\tau_{n+1} \in S)$  is well-defined and injective, we have $(0 < ) \#(S) \leq \#(\Pi_{n+1}(\tree)) (< \infty)$. 
    Thus, we have proved our proposition.  
\end{proof}
\begin{remark} \label{tree_remarks_1variable}
    If $\tree$ is $1$-variable, then the tree $\tree$ has another representation. 
    Indeed, by the definition of $1$-variability of $\tree$, 
    we deduce that $\#(\mathcal{J}_{n}) = 1$ for each $n \in \natural$ and $I_{n} = S(\tree, \omega_{[1, n]})$ for each $\omega_{[1, n]} \in \tree^{[1, n]} \ (n \in \natural)$, and by the induction we deduce that $\tree^{1} = \{\omega_{1} \in I \ | \ \omega \in I_{1} \ \} = I_{1}$ and 
    \begin{align*}
        \tree^{[1, n+1]} 
        &= \Pi_{[1, n+1]}(\tree)
        = \bigcup_{\omega_{[1, n]} \in \tree^{[1, n]}} \{ \omega_{[1, n]} \} \times S(\tree, \omega_{[1, n]})
        = \bigcup_{\omega_{[1, n]} \in \tree^{[1, n]}} \{ \omega_{[1, n]} \} \times I_{n+1} \\
        &= \tree^{[1, n]} \times I_{n+1}
        = I_{1} \times \cdots \times I_{n} \times I_{n+1}
        = \prod_{i = 1}^{n+1} I_{i} 
    \end{align*}
    for each $n \in \natural$. 
    Now, let $x = x_{1}x_{2} \cdots \in \prod_{i = 1}^{\infty} I_{i}$. 
    Since $x \in \tree^{[1, n]} \times \prod_{i = n+1}^{\infty} I_{i}$ for each $n \in \natural$, there exists $\omega^{(n)} = \omega^{(n)}_{1} \omega^{(n)}_{2} \cdots \in \tree$ such that $\omega^{(n)}_{[1, n]} = x_{[1, n]}$ 
    and we deduce that $\omega^{(n)}$ converges to $x$ as $n \in \natural$ tends to infinity. 
    Since $\tree$ is closed, we have $x = \lim_{n \to \infty} \omega^{(n)} \in \tree$. 
    Therefore, we obtain that 
    \begin{align*}
        \tree = \prod_{i = 1}^{\infty} I_{i} \quad \text{and} \quad
        \tree^{[m, n]} = \prod_{i = m}^{n} I_{i} \ (m, n \in \natural \ \text{with} \ m \leq n). 
    \end{align*}
    
    By the similar argument, we also obtain that if $\tree$ is $1$-variable, then   
    \begin{align*}
        \tree_{\omega} = \prod_{i = |\omega|+1}^{\infty} I_{i}, \quad \text{and} \quad
        \tree_{\omega}^{[m, n]} = \prod_{i = |\omega|+m}^{|\omega|+n} I_{i} \ (m, n \in \natural \ \text{with} \ m \leq n). 
    \end{align*}
    for each $\omega \in \tree^{[1, *]}$.  
    Note that this is the case in the paper \cite{ReU}. 
\end{remark}
%
%
%
%
%
%
%
%
%
%
%
%
%
%
%
%
%
%
\subsection{General iterated function systems and the projection maps}
\label{general_IFS_and_proj_map}
In this subsection, we introduce the notion of general IFSs and we consider the projection map for the general IFSs. 
In this paper, general IFS is a pair of a tree and a family of (uniformly) contractive mappings on a complete metric space. 
Later, we introduce the family of the limits for a general IFS by using the projection map for the general IFSs. 

We now introduce the definition of general IFSs. 
\begin{definition} \label{def_of_genral_NIFS}
    Let $(X, \rho)$ be a complete metric space. 
    We say that a pair $(\{ f_{i} \}_{i \in I}, \tree)$ is a general IFS on $(X, \rho)$ with the uniform contraction constant $c \in (0, 1)$ if
    \begin{enumerate}
        \item $\tree$ is a tree with a set $I$ and 
        \item $f_{i} \colon X \to X \ (i \in I)$ is a family of contractive mappings on $X$ with the uniform contraction constant $c$, that is, for all $i \in I$ and $x, y \in X$, 
        \begin{equation*}
            \rho(f_{i}(x), f_{i}(y)) \leq c \ \rho(x, y). 
        \end{equation*}
    \end{enumerate} 
\end{definition}
Note that, for each $i \in I$, there exists the unique fixed point $z_{i}$ of $f_{i}$ since $X$ is complete. 

\begin{lemma} \label{projection_map_conti_lemma}
    Let $(\{ f_{i} \}_{i \in I}, \tree)$ be a general IFS with the uniform contraction constant $c \in (0, 1)$ on a complete metric space $(X, \rho)$ and $z_{i} \in X$ be the unique fixed point of $f_{i} \ (i \in I)$. 
    Suppose that there exists $x \in X$ such that
    \begin{equation*}
            \sum_{n \in \mathbb{N}} \left\{ \max_{i \in I_{n}} \rho(x, z_{i}) \right\} \cdot c^{n} < \infty. 
    \end{equation*}
    Then, there exists $\{ x_{\omega} \}_{\omega \in \tree}$ such that,  $x_{\omega} \in X$ for each $\omega \in \tree$ and 
    \begin{align} \label{projection_map_conti_inequality}
        \sup \left\{ \ \rho(x_{\omega}, x_{\omega^{\prime}}) \ | \ \omega, \omega^{\prime} \in \tree, \omega_{[1, s]} = \omega^{\prime}_{[1, s]} \ \right\}
        \leq 2 \cdot (1+c) \cdot c^{-1} \sum_{k=s+1}^{\infty} \left\{ \max_{i \in I_{k}} \rho(x, z_{i}) \right\} \cdot c^{k}
    \end{align}
    for each $s \in \natural$. 
\end{lemma}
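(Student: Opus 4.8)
The plan is to build the family $\{x_\omega\}_{\omega\in\tree}$ by applying Lemma \ref{e_contractiveiteration} along each branch of the tree, and then to extract the uniform estimate (\ref{projection_map_conti_inequality}) from inequality (\ref{basic_rate_of_naifs}). First I would fix $\omega=\omega_1\omega_2\cdots\in\tree$ and consider the sequence of contractive mappings $g^\omega_j:=f_{\omega_j}\colon X\to X$ $(j\in\natural)$, which is again a sequence with the same uniform contraction constant $c$. The unique fixed point of $g^\omega_j$ is $z_{\omega_j}$, and since $\omega_j\in I_j$ for every $j$, the hypothesis $\sum_{n}\bigl(\max_{i\in I_n}\rho(x,z_i)\bigr)c^n<\infty$ gives $\sum_j c^j\rho(x,z_{\omega_j})\le\sum_j\bigl(\max_{i\in I_j}\rho(x,z_i)\bigr)c^j<\infty$, so the hypothesis (\ref{Cauchyfinitenesscondition}) of Lemma \ref{e_contractiveiteration} is met for the sequence $\{g^\omega_j\}_j$. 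Applying that lemma with $m=1$ produces a point $x_\omega:=x^\omega_1\in X$ (the first term of the associated recursively compatible sequence) together with, for every $y\in X$ and $n\in\naturalz$, the bound
\begin{equation*}
    \rho\bigl(f_{\omega_1}\circ\cdots\circ f_{\omega_{n+1}}(y),\,x_\omega\bigr)\le(1+c)\,c^{-1}\sum_{k=n+2}^{\infty}c^k\rho(y,z_{\omega_k}).
\end{equation*}
This defines $x_\omega$ for every $\omega\in\tree$.

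Next I would prove the estimate (\ref{projection_map_conti_inequality}). Fix $s\in\natural$ and $\omega,\omega'\in\tree$ with $\omega_{[1,s]}=\omega'_{[1,s]}$; write $g:=f_{\omega_1}\circ\cdots\circ f_{\omega_s}=f_{\omega'_1}\circ\cdots\circ f_{\omega'_s}$ (equal since the first $s$ letters agree). Choose any fixed base point, e.g. $y=x$, and insert $g(x)$ as an intermediate point:
\begin{equation*}
    \rho(x_\omega,x_{\omega'})\le\rho(x_\omega,f_{\omega_1}\circ\cdots\circ f_{\omega_{s+n}}(x))+\rho(f_{\omega_1}\circ\cdots\circ f_{\omega_{s+n}}(x),f_{\omega'_1}\circ\cdots\circ f_{\omega'_{s+n}}(x))+\rho(f_{\omega'_1}\circ\cdots\circ f_{\omega'_{s+n}}(x),x_{\omega'}).
\end{equation*}
Here the middle term is controlled by $\rho(g(a),g(b))\le c^s\rho(a,b)$ where $a,b$ are the images of $x$ under the remaining $n$ maps, which is harmless because the first and third terms tend to $0$ as $n\to\infty$ by the bound above, and in fact it is cleaner to let $n\to\infty$ directly: the two outer terms are each at most $(1+c)c^{-1}\sum_{k=s+n+2}^\infty c^k\rho(x,z_{\omega_k})$ (resp. with $\omega'$), which vanish, while the middle term is at most $c^{s+n}\operatorname{diam}$ of a bounded set — but to avoid boundedness assumptions I would instead only split off the tail from step $s$ onwards. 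Concretely: apply the bound with the iteration started at step $s+1$, i.e. compare $x_\omega$ with $f_{\omega_1}\cdots f_{\omega_{s+1+n}}(x_{s+1})$ where $x_{s+1}$ is the common value; since $\omega_{[1,s]}=\omega'_{[1,s]}$ both $x_\omega$ and $x_{\omega'}$ equal $g$ applied to the corresponding tails, so $\rho(x_\omega,x_{\omega'})=\rho\bigl(g(x^{\sigma^s\omega}_1),g(x^{\sigma^s\omega'}_1)\bigr)\le c^s\rho(x^{\sigma^s\omega}_1,x^{\sigma^s\omega'}_1)$, and then bound the latter by the triangle inequality through a common point $x$ using (\ref{basic_rate_of_naifs}) twice with $m=1$, giving $\rho(x^{\sigma^s\omega}_1,x^{\sigma^s\omega'}_1)\le 2(1+c)c^{-1}\sum_{k=1}^\infty c^k\rho(x,z_{(\sigma^s\omega)_k})$. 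Re-indexing $k\mapsto k-s$ and using $(\sigma^s\omega)_j=\omega_{s+j}\in I_{s+j}$ turns the factor $c^s$ times this sum into $2(1+c)c^{-1}\sum_{k=s+1}^\infty c^k\rho(x,z_{\omega_k})\le 2(1+c)c^{-1}\sum_{k=s+1}^\infty\bigl(\max_{i\in I_k}\rho(x,z_i)\bigr)c^k$, which is exactly the claimed right-hand side; taking the supremum over all such pairs finishes the proof.

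The main obstacle I anticipate is purely bookkeeping: keeping the index shifts straight when passing between "the iteration $\{f_{\omega_j}\}_{j\ge 1}$ with $m=1$" and "the tail iteration $\{f_{\omega_{s+j}}\}_{j\ge1}$", and making sure the factor $c^{-m}=c^{-1}$ from (\ref{basic_rate_of_naifs}) combines correctly with the extra $c^s$ coming from the common prefix $g$ — note the net power of $c$ in front of the tail sum $\sum_{k\ge s+1}$ must come out to $c^{-1}$, not $c^{s-1}$, which works precisely because the sum itself is over $k\ge s+1$ and carries the weights $c^k$. A secondary point is to confirm that the existence half genuinely needs no boundedness of $Z$ or of $X$: this is guaranteed since Lemma \ref{e_contractiveiteration} is stated without such assumptions, and the summability hypothesis here was designed so that (\ref{Cauchyfinitenesscondition}) holds for each branch simultaneously with a branch-independent majorant.
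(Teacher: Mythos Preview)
Your construction of $x_\omega$ via Lemma \ref{e_contractiveiteration} applied to $\{f_{\omega_j}\}_{j\ge 1}$ is exactly the paper's. For the estimate, the paper's argument is actually the three-term decomposition you wrote down first, specialised to $n=0$: since $\omega_{[1,s]}=\omega'_{[1,s]}$ one has $f_{\omega_1}\cdots f_{\omega_s}(x)=f_{\omega'_1}\cdots f_{\omega'_s}(x)$, so the middle term vanishes identically, and the two outer terms are each bounded by $(1+c)c^{-1}\sum_{k\ge s+1}\{\max_{i\in I_k}\rho(x,z_i)\}c^k$ via (\ref{basic_rate_of_naifs}) with $m=1$ and $n=s-1$. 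You overlooked this and pivoted to the tail-factoring argument $x_\omega=g(x_{s+1}^\omega)$, $\rho(x_\omega,x_{\omega'})\le c^s\rho(x_1^{\sigma^s\omega},x_1^{\sigma^s\omega'})$, which is also correct and gives the same bound after re-indexing; just note that to control $\rho(x,x_1^{\sigma^s\omega})$ you are really invoking property (ii) of Lemma \ref{e_contractiveiteration} (equivalently (\ref{asym_bdd_of_x_m})) rather than (\ref{basic_rate_of_naifs}) itself. The paper's route is shorter and avoids having to check separately that the tail sequence $\{f_{\omega_{s+j}}\}_{j\ge1}$ satisfies the hypothesis of Lemma \ref{e_contractiveiteration}.
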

\begin{proof}
    Let $\omega \in \tree$ and we set $g_{m} := f_{\omega_{m}} \ (m \in \natural)$. 
    Note that $\{g_{m}\}_{m \in \natural}$ is a sequence of contractive mappings on $X$ with an uniform contraction constant $c \in (0, 1)$ (see, Definitions \ref{definition_of_nonautonomous_iterations} and \ref{def_of_genral_NIFS}) and satisfies the assumption in Lemma \ref{e_contractiveiteration}. 
    Note that $\omega_{k} \in I_{k}$ for each $k \in \natural$. 
    By the inequality (\ref{basic_rate_of_naifs}) with $m = 1$, we deduce that 
    \begin{align} \label{basic_projection_map_inequality}
        \rho(g_{[1, 1+n]}(x), x_{\omega})
        \leq (1+c) \cdot c^{-1} \sum_{k = n+2}^{\infty} c^{k} \rho(x, z_{\omega_{k}})
        \leq (1+c) \cdot c^{-1} \sum_{k = n+2}^{\infty} \left\{ \max_{i \in I_{k}} \rho(x, z_{i}) \right\} \cdot c^{k}
    \end{align} 
    for all $n \in \naturalz$, where $x_{\omega} \in X$ is the first element $x_{1}$ of the recursively compatible sequence $\{x_{m}\}_{m \in \natural}$ for $\{g_{m}\}_{m \in \natural}$ and $z_{\omega_{k}}$ is the unique fixed point of $g_{k} = f_{\omega_{k}} \ (k \in \natural)$.  
    By the same argument, we also deduce that 
    \begin{align} \label{basic_projection_map_inequality_prime} 
        \rho(g^{\prime}_{[1, 1+n]}(x), x_{\omega^{\prime}})
        \leq (1+c) \cdot c^{-1} \sum_{k = n+2}^{\infty} \left\{ \max_{i \in I_{k}} \rho(x, z_{i}) \right\} \cdot c^{k}
    \end{align}
    for all $n \in \naturalz$ and $\omega^{\prime} \in \tree$, where $g^{\prime}_{m} := f_{\omega^{\prime}_{m}} \ (m \in \natural)$ and $x_{\omega^{\prime}} \in X$ is the first element $x^{\prime}_{1}$ of the recursively compatible sequence $\{x^{\prime}_{m}\}_{m \in \natural}$ for $\{g^{\prime}_{m} \}_{m \in \natural}$.  
     
    Now, let $s \in \natural$ and $\omega, \omega^{\prime} \in \tree$ with $\omega_{[1, s]} = \omega^{\prime}_{[1, s]}$. 
    Note that $g_{m}=g^{\prime}_{m}$ for each $m = 1, \ldots, s$. 
    By the inequality (\ref{basic_projection_map_inequality}) and (\ref{basic_projection_map_inequality_prime}) with $n =s-1 \in \naturalz$, we have 
    \begin{align*}
        \rho(x_{\omega}, x_{\omega^{\prime}}) 
        &\leq \rho(x_{\omega}, g_{[1, s]}(x)) + \rho(g^{\prime}_{[1, s]}(x), x_{\omega^{\prime}}) \notag \\
        &\leq 2 \cdot (1+c) \cdot c^{-1} \sum_{k = s+1}^{\infty} \left\{ \max_{i \in I_{k}} \rho(x, z_{i}) \right\} \cdot c^{k}.  \label{diamestimate}
    \end{align*}
    Thus, we have proved our lemma. 
\end{proof}
We now introduce the definition of the projection map on trees. 
\begin{definition} \label{definition_of_projection_map}
    Let Let $(\{ f_{i} \}_{i \in I}, \tree)$ be a general IFS with the uniform contraction constant $c \in (0, 1)$ on a complete metric space $(X, \rho)$ and $z_{i} \in X$ be the unique fixed point of $f_{i} \ (i \in I)$. 
    Suppose that there exists $x \in X$ such that
    \begin{equation*}
            \sum_{n \in \mathbb{N}} \left\{ \max_{i \in I_{n}} \rho(x, z_{i}) \right\} \cdot c^{n} < \infty. 
    \end{equation*}
    Then, the projection map $\pi \colon \tree \to X$ for $( \{ f_{i} \}_{i \in I}, \{ J_{n} \}_{n \in \natural} )$ is defined by 
    \begin{equation*}
        \pi(\omega) := x_{\omega} \quad ( \omega \in \tree ), 
    \end{equation*} 
    where $x_{\omega} \in X \ (\omega \in \tree)$ are the elements introduced in Lemma \ref{projection_map_conti_lemma} ( the first elements $x_{1}$ of the recursively compatible sequence $\{x_{m}\}_{m \in \natural}$ for $\{f_{\omega_{m}}\}_{m \in \natural}$). 
\end{definition}

\begin{remark} \label{projection_map_subtree}
    By the same argument in Lemma \ref{Cauchyfiniteness}, 
    if a general IFS $(\{f_{i}\}_{i \in I}, \tree)$ with the uniform contraction constant $c \in (0, 1)$ on a complete metric space $(X, \rho)$ satisfies the the condition
    \begin{equation*}
        \sum_{n \in \mathbb{N}} \left\{ \max_{i \in I_{n}} \rho(x, z_{i}) \right\} \cdot c^{n} < \infty. 
    \end{equation*}
    for some $x \in X$ ( where $z_{i} \in X$ be the unique fixed point of $f_{i} \ (i \in I)$), then we have 
    \begin{equation*}
        \sum_{n \in \mathbb{N}} \left\{ \max_{i \in I_{n}} \rho(y, z_{i}) \right\} \cdot c^{n} < \infty. 
    \end{equation*}
    for all $y \in X$. 
    
    In addition, if a general IFS $(\{f_{i}\}_{i \in I}, \tree)$ with the uniform contraction constant $c \in (0, 1)$ on a complete metric space $(X, \rho)$ satisfies the above condition for some $x \in X$, then we also define the projection map for $(\{f_{i}\}_{i \in I}, \tree_{\omega})$ for each $\omega \in \tree^{[1, *]}$. 

    Indeed, Let $\omega \in \tree^{[1, *]}$. 
    By Remark \ref{definition_subtree}, we have 
    $I_{1}^{\omega} := \Pi_{1}(\tree_{\omega}) = S(\tree, \omega) \subset I_{|\omega|+1}$. 
    In addition, let $n \geq 2$. 
    By Remark \ref{definition_subtree}, we deduce that $\{\omega\} \times \tree_{\omega}^{[1, n-1]} \subset \tree^{[1, |\omega|+n-1]}$, $S(\tree_{\omega}, \omega^{\prime}_{[1, n-1]}) = S(\tree, \omega\omega^{\prime}_{[1, n-1]})$ for each $\omega^{\prime}_{[1, n-1]} \in \tree_{\omega}^{[1, n-1]}$ and  
    \begin{align*}
        I_{n}^{\omega} 
        := \bigcup_{\omega^{\prime}_{[1, n-1]} \in \tree_{\omega}^{[1, n-1]}} S(\tree_{\omega}, \omega^{\prime}_{[1, n-1]}) 
        \subset \bigcup_{\tau \in \tree^{[1, |\omega|+n-1]} } S(\tree, \tau) = I_{|\omega|+n}
    \end{align*}
    and it follows that
    \begin{align*}
        \sum_{n \in \mathbb{N}} \left\{ \max_{i \in I_{n}^{\omega}} \rho(x, z_{i}) \right\} \cdot c^{n}
        \leq \sum_{n \in \mathbb{N}} \left\{ \max_{i \in I_{|\omega|+n}} \rho(x, z_{i}) \right\} \cdot c^{n}
        \leq c^{-|\omega|} \sum_{n \in \mathbb{N}} \left\{ \max_{i \in I_{n}} \rho(x, z_{i}) \right\} \cdot c^{n} < \infty. 
    \end{align*}
    Therefore, by Lemma \ref{projection_map_conti_lemma}, the projection map for $( \{ f_{i} \}_{i \in I}, \tree_{\omega} )$ is well-defined for each $\omega \in \tree^{[1, *]}$
    Henceforth, we also denote by $\pi$ the projection map for $(\{ f_{i} \}_{i \in I}, \tree_{\omega} )$. 
\end{remark}
We next define the notions used in Theorem \ref{Lipschitz_of_projection_map}. 
\begin{definition}
    Let $I$ be a set. 
    The shift map $\sigma \colon I^{\natural} \to I^{\natural}$ is defined by 
    \begin{equation*}
        \sigma(\omega) := \omega_{2} \omega_{3} \cdots \quad ( \omega = \omega_{1} \omega_{2} \cdots \in I^{\natural} ).  
    \end{equation*}
    Similarly, the map $\sigma_{i} \colon I^{\natural} \to I^{\natural} \ (i \in I)$ is defined by 
    \begin{equation*}
        \sigma_{i}(\omega) := i \omega \cdots \quad ( \omega \in I^{\natural}). 
    \end{equation*}
    In addition, for each $\omega = \omega_{1} \cdots \omega_{n} \in I^{n} \ (n \in \natural)$, we set $f_{\omega} := f_{\omega_{1}} \circ \cdots \circ f_{\omega_{n}}$, and $f_{\phi} := \textrm{id}_{X}$. 
\end{definition}
We now show the following important theorem in this paper. 
Note that the following theorem is a generalization of the result on \cite{I}. 
\begin{theorem} \label{Lipschitz_of_projection_map}
    Let $\pi \colon \tree \to X$ be the projection map for a general IFS $(\{ f_{i} \}_{i \in I}, \tree)$ with the uniform contraction constant $c \in (0, 1)$ on a complete metric space $(X, \rho)$ such that
    \begin{equation*}
        \sum_{n \in \mathbb{N}} \left\{ \max_{i \in I_{n}} \rho(x, z_{i}) \right\} \cdot c^{n} < \infty. 
    \end{equation*}
    for some $x \in X$, where $z_{i} \in X$ be the unique fixed point of $f_{i} \ (i \in I)$. 
    Then, $\pi \colon \tree \to X$ is uniformly continuous on $\tree$. 
    In addition, for each $\omega \in \tree^{[1, *]}$, $f_{\omega^{\prime}} \circ \pi = \pi \circ \sigma_{\omega^{\prime}}$ on $\tree_{\omega\omega^{\prime}}$ for each $\omega^{\prime} \in \tree_{\omega}^{[1, *]}$. 
\end{theorem}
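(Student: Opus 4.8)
The plan is to treat the two assertions separately, each by transferring the relevant statement about the non-autonomous iteration $\{f_{\omega_m}\}_{m\in\natural}$ (for a branch $\omega$ of $\tree$) back to the lemmas of Section 2.

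\emph{Uniform continuity.} The whole content here is already packaged in Lemma \ref{projection_map_conti_lemma}. Put $r_s := \sum_{k=s+1}^{\infty}\bigl\{\max_{i\in I_k}\rho(x,z_i)\bigr\}c^{k}$. By the hypothesis that the full series converges, $r_s\to 0$ as $s\to\infty$. Equip $I^{\natural}$, hence its closed subset $\tree$, with the standard ultrametric $d(\omega,\omega'):=2^{-N(\omega,\omega')}$, where $N(\omega,\omega'):=\min\{n\in\natural:\omega_n\neq\omega'_n\}$ (and $d(\omega,\omega)=0$); since $I$ is countable this metric induces the product topology. Given $\varepsilon>0$, choose $s$ with $2(1+c)c^{-1}r_s<\varepsilon$. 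If $d(\omega,\omega')<2^{-s}$ then $\omega_{[1,s]}=\omega'_{[1,s]}$, so $\rho(\pi(\omega),\pi(\omega'))\le 2(1+c)c^{-1}r_s<\varepsilon$ by Lemma \ref{projection_map_conti_lemma}; this is precisely uniform continuity. (Alternatively, the same estimate gives continuity of $\pi$, and then uniform continuity is automatic since $\tree$ is compact by Proposition \ref{tree_characterization}.)

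\emph{Equivariance.} Fix $\omega\in\tree^{[1,*]}$ and $\omega'\in\tree_{\omega}^{[1,*]}$. By Remark \ref{definition_subtree} one has $\omega\omega'\in\tree^{[1,*]}$ and $\tree_{\omega\omega'}=(\tree_{\omega})_{\omega'}$, and by Remark \ref{projection_map_subtree} the projection maps for $(\{f_i\}_{i\in I},\tree_{\omega})$ and for $(\{f_i\}_{i\in I},\tree_{\omega\omega'})$ are well-defined (all of them written $\pi$, as agreed there); moreover for $\tau\in\tree_{\omega\omega'}$ we have $\omega'\tau\in\tree_{\omega}$, so $\sigma_{\omega'}$ indeed maps $\tree_{\omega\omega'}$ into $\tree_{\omega}$ and both sides of the asserted identity are meaningful. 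The key observation is the ``limit of iterates'' description of the projection map: for any $\eta$ in a tree on which $\pi$ is defined, inequality $(\ref{basic_rate_of_naifs})$ of Lemma \ref{e_contractiveiteration} applied to $g_m:=f_{\eta_m}$ with $m=1$ (this is exactly how $x_{\eta}$ was produced in the proof of Lemma \ref{projection_map_conti_lemma}) shows $\rho\bigl(f_{\eta_1}\circ\cdots\circ f_{\eta_{1+n}}(x),\pi(\eta)\bigr)\to 0$, i.e. $\pi(\eta)=\lim_{n\to\infty} f_{\eta_{[1,n]}}(x)$, the limit existing and being independent of the base point. Now fix $\tau\in\tree_{\omega\omega'}$ and set $k:=|\omega'|$. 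For $n>k$ we have $(\omega'\tau)_{[1,n]}=\omega'\,\tau_{[1,n-k]}$, hence $f_{(\omega'\tau)_{[1,n]}}=f_{\omega'}\circ f_{\tau_{[1,n-k]}}$. Letting $n\to\infty$ and using that $f_{\omega'}$, being a finite composition of contractive maps, is continuous, we get
\[
\pi(\sigma_{\omega'}(\tau))=\pi(\omega'\tau)=\lim_{n\to\infty} f_{\omega'}\bigl(f_{\tau_{[1,n-k]}}(x)\bigr)=f_{\omega'}\Bigl(\lim_{n\to\infty} f_{\tau_{[1,n-k]}}(x)\Bigr)=f_{\omega'}(\pi(\tau)),
\]
which is the claimed relation on $\tree_{\omega\omega'}$. (Taking $\omega=\phi$ recovers the statement $f_{\omega'}\circ\pi=\pi\circ\sigma_{\omega'}$ on $\tree_{\omega'}$; the general $\omega$ only records that the same holds inside every subtree.)

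\emph{Where the care is needed.} Neither step is deep once Lemma \ref{e_contractiveiteration} and Lemma \ref{projection_map_conti_lemma} are available; the main obstacle is purely bookkeeping. One must reconcile the indexing conventions — $\pi(\eta)$ is the \emph{first} term $x_1$ of the recursively compatible sequence, while Lemma \ref{e_contractiveiteration} describes each $x_m$ as $\lim_n f_{[m,m+n]}(y)$, so one has to unwind this to the form $\pi(\eta)=\lim_n f_{\eta_{[1,n]}}(x)$; one must extend $\sigma_i$ and $f_i$ from single letters to finite words consistently with Definition \ref{definition_of_projection_map} and the notation $f_{\omega'}$; and one must be sure, via Remark \ref{projection_map_subtree}, that every auxiliary subtree projection map ($\pi$ on $\tree_{\omega}$ and on $\tree_{\omega\omega'}$) is legitimately defined, so that throughout the computation ``$\pi$'' always denotes the projection map attached to the correct tree.
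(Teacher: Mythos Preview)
Your proof is correct and follows essentially the same route as the paper: uniform continuity is read off directly from the estimate of Lemma \ref{projection_map_conti_lemma} (the paper phrases it in terms of cylinder neighbourhoods rather than your ultrametric, but the content is identical), and equivariance is obtained by writing $\pi(\eta)=\lim_{n}f_{\eta_{[1,n]}}(x)$ via Lemma \ref{e_contractiveiteration}, factoring $f_{(\omega'\tau)_{[1,n]}}=f_{\omega'}\circ f_{\tau_{[1,n-|\omega'|]}}$, and passing the limit through the continuous map $f_{\omega'}$. The paper additionally inserts the explicit bound $\rho(f_{\tau_{[1,n+1]}}(x),\pi(\tau))\le (1+c)c^{-(|\omega\omega'|+1)}\sum_{k\ge n+2}\{\max_{i\in I_k}\rho(x,z_i)\}c^{k}$ before taking limits, but this is just a quantitative version of the same convergence you use.
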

\begin{proof}
    We first show that $\pi \colon \tree \to X$ is uniformly continuous on $\tree$. 
    Let $\epsilon > 0$. 
    Then, there exists $M \in \mathbb{N}$ such that 
    $2 \cdot (1+c) \cdot c^{-1} \sum_{k=M+1}^{\infty} \left\{ \max_{i \in I_{k}} \rho(x, z_{i}) \right\} \cdot c^{k} < \epsilon$. 
    We set $U := \{ \omega_{[1, M]} \} \times \prod_{i = M+1}^{\infty} I \subset I^{\natural} \ (\omega_{[1, M]} \in \tree^{[1, M]})$ and let $\tau, \tau^{\prime} \in U$.
    Since $\tau_{i} = \tau^{\prime}_{i}$ for each $i = 1, \ldots M$ and by Lemma \ref{projection_map_conti_lemma} we deduce that 
    \begin{equation}
        d(\pi(\tau), \pi(\tau^{\prime})) \leq 2 \cdot (1+c) \cdot c^{-1} \sum_{k=M+1}^{\infty} \left\{ \max_{i \in I_{k}} \rho(x, z_{i}) \right\} \cdot c^{k} < \epsilon.   
    \end{equation}
    Therefore, we have proved that $\pi \colon \tree \to X$ is uniformly continuous on $\tree$. 
    We now show that, for each $\omega \in \tree^{[1, *]}$, $f_{\omega^{\prime}} \circ \pi = \pi \circ \sigma_{\omega^{\prime}}$ on $\tree_{\omega\omega^{\prime}}$ for each $\omega^{\prime} \in \tree_{\omega}^{[1, *]}$. 
    Let $\omega \in \tree^{[1, *]}$, $\omega^{\prime} \in \tree_{\omega}^{[1, *]}$ and $\tau \in \tree_{\omega\omega^{\prime}}$. 
    Then, we have $\sigma_{\omega^{\prime}}(\tau) \in \tree_{\omega}$. 
    Indeed, by the definition of $\tree_{\omega\omega^{\prime}}$, we have $\omega\omega^{\prime}\tau \in \tree$ and $\pi_{(|\omega|, \infty)}(\omega\omega^{\prime}\tau) = \omega^{\prime}\tau$. 
    It follows that 
    \begin{equation*}
        \sigma_{\omega^{\prime}}(\tau) 
        = \omega^{\prime} \tau 
        = \pi_{(|\omega|, \infty)}(\omega\omega^{\prime}\tau) \in \tree_{\omega}. 
    \end{equation*}
    
    Now, recall that, by the condition in Theorem \ref{Lipschitz_of_projection_map}, $\{f_{\tau_{m}}\}_{m \in \natural}$ is a sequence of contractive mappings on $X$ with an uniform contraction constant $c \in (0, 1)$ and satisfies the condition in Lemma \ref{e_contractiveiteration}. 
    By definition of $\pi$ for general IFS $(\{ f_{i} \}_{i \in I}, \tree_{\omega\omega^{\prime}})$ and the inequality (\ref{basic_rate_of_naifs}) with $m = 1$ and $y = x$, we have
    \begin{align*}
        &\rho(f_{\tau_{[1, n+1]}}(x), \pi(\tau) )
        \leq (1+c) \cdot c^{-1} \sum_{k = n+2}^{\infty} c^{k} \rho(x, z_{\tau_{k}}) \\
        &\leq (1+c) \cdot c^{-1} \sum_{k = n+2}^{\infty} \left\{ \max_{i \in I^{\omega\omega^{\prime}}_{k}} \rho(x, z_{i}) \right\} \cdot c^{k}
        \leq (1+c) \cdot c^{-(|\omega\omega^{\prime}|+1)} \sum_{k = n+2}^{\infty} \left\{ \max_{i \in I_{k}} \rho(x, z_{i}) \right\} \cdot c^{k}
    \end{align*}
    for all $\tau \in \tree_{\omega\omega^{\prime}}$ and $n \in \naturalz$, where $z_{\tau_{k}}$ is the unique fixed point of $f_{\tau_{k}} \ (k \in \natural)$ and we use the fact that 
    $\tau_{k} \in I^{\omega\omega^{\prime}}_{k} \subset I_{|\omega\omega^{\prime}|+k}$ for each $k \in \natural$ in Remark \ref{projection_map_subtree}. 
    Thus, by the definition of $\pi(\tau)$, we have
    \begin{align*}
        &\rho(f_{(\omega^{\prime}\tau)_{[1, n+|\omega^{\prime}|+1]}}(x), f_{\omega^{\prime}}(\pi(\tau))) 
        \leq c^{|\omega^{\prime}|} \cdot \rho(f_{\tau_{[1, n+1]}}(x), \pi(\tau) ) \\
        &\leq (1+c) \cdot c^{-(|\omega|+1)} \cdot \sum_{k = n+2}^{\infty} \left\{ \max_{i \in I_{k}} \rho(x, z_{i}) \right\} \cdot c^{k}
    \end{align*}
    for each $n \in \natural$ and it follows that $f_{(\omega^{\prime}\tau)_{[1, n+|\omega^{\prime}|+1]}}(x)$ converges to $f_{\omega^{\prime}}(\pi(\tau))$ as $n$ tends to infinity. 
    On the other hand, by the definition of $\pi(\omega^{\prime}\tau)$, $f_{(\omega^{\prime}\tau)_{[1, n+|\omega^{\prime}|+1]}}(x)$ converges to $\pi(\omega^{\prime}\tau)$ as $n$ tends to infinity and it follows that $f_{\omega^{\prime}}(\pi(\tau)) = \pi(\omega^{\prime}\tau) = \pi(\sigma_{\omega^{\prime}}(\tau))$ for each $\tau \in \tree_{\omega\omega^{\prime}}$. 
    Hence, we have proved our lemma. 
\end{proof}
\begin{remark} \label{choice_of_sequence}
    In Theorem \ref{Lipschitz_of_projection_map}, if there exist $C > 0$ and a non-negative-valued sequence $b$ such that $\left\{ \max_{i \in I_{l}} \rho(x, z_{i}) \right\} \cdot c^{l} \leq C \cdot b(l)$ with $\sum_{l \in \natural} b(l) < \infty$, then the similar result as the Lemma \ref{projection_map_conti_lemma} and Theorem \ref{Lipschitz_of_projection_map} also holds. 
\end{remark}

%
%
%
%
%
%
%
%
%
%
%
%
%
%
%
%
%
%
%
%
%
%
%
%
%
%
%
%
%
%
%
%
%
%
%
%
%
%
%
%
%
%
%
%
%
%
%
%
%
%
%
%
%
%
%

\section{The family of limit sets for general IFSs} \label{limit_set}
In this section, we now introduce the definition of the family of the limit set for general IFSs and show the uniqueness (in some sense) of the family of the limit sets for the general IFS. 
Note that the main result (Theorem \ref{main}) is a generalization of the result on \cite{I}. 
\begin{definition} \label{Definition_of_limit_set}
        Let $(\{ f_{i} \}_{i \in I}, \tree)$ be a general IFS with the uniform contraction constant $c \in (0, 1)$ on a complete metric space $(X, \rho)$ and $z_{i} \in X$ be the unique fixed point of $f_{i} \ (i \in I)$. 
    Suppose that there exists $x \in X$ such that
    \begin{equation*}
            \sum_{n \in \mathbb{N}} \left\{ \max_{i \in I_{n}} \rho(x, z_{i}) \right\} \cdot c^{n} < \infty. 
    \end{equation*}
    Then, the non-empty compact subset $\pi(\tree)$ of $X$ is called the limit set for $(\{ f_{i} \}_{i \in I}, \tree)$, where continuous map $\pi \colon \tree \to X$ is the projection map for $(\{ f_{i} \}_{i \in I}, \tree)$ introduced in Definition \ref{definition_of_projection_map} (also, see Proposition \ref{tree_characterization} and Theorem \ref{Lipschitz_of_projection_map} ). 
\end{definition}
\begin{remark}
    By Remark \ref{projection_map_subtree}, if a general IFS $(\{f_{i}\}_{i \in I}, \tree)$ with the uniform contraction constant $c \in (0, 1)$ on a complete metric space $(X, \rho)$ satisfies the condition
    \begin{equation*}
        \sum_{n \in \mathbb{N}} \left\{ \max_{i \in I_{n}} \rho(x, z_{i}) \right\} \cdot c^{n} < \infty. 
    \end{equation*}
    for some $x \in X$ ( where $z_{i} \in X$ is the unique fixed point of $f_{i} \ (i \in I)$), then we also define the limit set $\pi(\tree_{\omega})$ for the general IFS $(\{f_{i}\}_{i \in I}, \tree_{\omega})$ for each $\omega \in \tree^{[1, *]}$. 
    Note that $\pi(\tree_{\omega})$ is also non-empty compact subset of $X$. 
\end{remark}

To discuss the uniqueness of the limit set for general IFSs, we introduce the notion of the Hausdorff distance and its properties. 
Let $\mathcal{K}(X)$ be the set of non-empty compact subsets in a complete metric space $(X, \rho)$. 
For each $\epsilon > 0$ and set $A \subset X$, we set $A_{\epsilon} := \{ x \in X \ | \ {}^{\exists}a \in A, \ \text{s.t.} \  \rho(a, x) \leq \epsilon \}$.  
Let $\rho_{H}$ be the Hausdorff distance on $\mathcal{K}(X)$ defined by 
\begin{equation*}
    \rho_{H}(A, B) := \inf \{ \epsilon > 0 \ | \ A \subset B_{\epsilon}, B \subset A_{\epsilon} \} \quad (A, B \in \mathcal{K}(X)). 
\end{equation*}
Note that $\rho_{H}(\{a\}, \{a^{\prime}\}) = \rho(a, a^{\prime})$ for each $a, a^{\prime} \in X$ and $\rho_{H}(A, \{a^{\prime}\}) = \sup_{a \in A} \rho(a, a^{\prime})$ for each $A \in \mathcal{K}(X)$ and $a^{\prime} \in X$. 
Also, note that since $(X, \rho)$ is complete, $(\mathcal{K}(X), \rho_{H})$ is also complete (For example, see \cite{Ki}). 
Indeed, if $\{L_{n}\}_{n \in \natural}$ is a Cauchy sequence in $(\mathcal{K}(X), \rho_{H})$, then $\{L_{n}\}_{n \in \natural}$ converges to 
\begin{equation} \label{KX_Cauchy_limit}
    L := \bigcap_{n \in \natural} \overline{\bigcup_{ k \in \natural, k \geq n} L_{k}}^{\rho} \in \mathcal{K}(X)
\end{equation}
as $n$ tends to infinity, where $\overline{A}^{\rho}$ is the closure of $A \subset X$ with respect to the metric $\rho$. 

%
%
%
%
%
%
%
%
%
%
%
%
%
%
%
%
%
%
%
%
%
Before we prove the main theorem, we show the characterization of the projection map for a general IFSs $(\{f_{i}\}_{i \in I}, \tree)$. 
\begin{proposition} \label{justification_projection_map}
    Let $(\{ f_{i} \}_{i \in I}, \tree)$ be a general IFS with the uniform contraction constant $c \in (0, 1)$ on a complete metric space $(X, \rho)$ and $z_{i} \in X$ be the unique fixed point of $f_{i} \ (i \in I)$. 
    Suppose that there exists $x \in X$ such that
    \begin{equation*}
            \sum_{n \in \mathbb{N}} \left\{ \max_{i \in I_{n}} \rho(x, z_{i}) \right\} \cdot c^{n} < \infty. 
    \end{equation*}
    Then, for each $\omega \in \tree$ and $A \in \mathcal{K}(X)$, the sequence $\{ f_{\omega_{[1, n]}}(A)\}_{n \in \natural}$ of compact subsets converges to the single set $\{ \pi(\omega) \}$ as $n$ tends to infinity in sense of the Hausdorff distance. 
\end{proposition}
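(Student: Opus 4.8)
The plan is to deduce the Hausdorff-distance convergence from the pointwise rate estimate \eqref{basic_rate_of_naifs} of Lemma \ref{e_contractiveiteration}, applied to the non-autonomous iteration associated with $\omega$, and then to handle the supremum over the initial set $A$ by reducing to a single base point of $A$ via compactness.

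First I would fix $\omega \in \tree$ and put $g_m := f_{\omega_m}$ for $m \in \natural$. As in the proof of Lemma \ref{projection_map_conti_lemma}, $\{g_m\}_{m\in\natural}$ is a sequence of contractive mappings on $X$ with uniform contraction constant $c$, its $m$-th fixed point is $z_{\omega_m}$, and since $\omega_k \in I_k$ for all $k$ one has $\sum_{k\in\natural} c^k\rho(x,z_{\omega_k}) \le \sum_{k\in\natural}\{\max_{i\in I_k}\rho(x,z_i)\}c^k < \infty$, so the hypothesis \eqref{Cauchyfinitenesscondition} of Lemma \ref{e_contractiveiteration} holds for $\{g_m\}$; moreover, by Lemma \ref{Cauchyfiniteness} this summability persists with any point of $X$ in place of $x$. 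By Definition \ref{definition_of_projection_map}, $\pi(\omega)$ is the first term $x_1$ of the recursively compatible sequence for $\{g_m\}$, so \eqref{basic_rate_of_naifs} with $m=1$ gives, for every $y \in X$ and every $n \in \natural$,
\[
    \rho\big(f_{\omega_{[1,n]}}(y),\pi(\omega)\big) \le (1+c)\,c^{-1}\sum_{k=n+1}^{\infty} c^{k}\rho(y,z_{\omega_k}),
\]
where the index matching uses that $f_{\omega_{[1,n]}}$ is the composition of the $n$ maps $g_1,\dots,g_n$, i.e. $f_{[1,1+(n-1)]}$ in the notation of Section 2.

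Next I would use that $A$ is non-empty and compact and that $f_{\omega_{[1,n]}}$ is continuous, so $f_{\omega_{[1,n]}}(A) \in \mathcal{K}(X)$, and by the identity $\rho_H(B,\{a'\}) = \sup_{a\in B}\rho(a,a')$ recalled in this section,
\[
    \rho_H\big(f_{\omega_{[1,n]}}(A),\{\pi(\omega)\}\big) = \sup_{a\in A}\rho\big(f_{\omega_{[1,n]}}(a),\pi(\omega)\big) \le (1+c)\,c^{-1}\sup_{a\in A}\sum_{k=n+1}^{\infty} c^{k}\rho(a,z_{\omega_k}).
\]
To estimate the supremum I would fix any $a_0 \in A$ and set $R := \sup_{a\in A}\rho(a,a_0)$, which is finite because $A$ is compact; then $\rho(a,z_{\omega_k}) \le R + \rho(a_0,z_{\omega_k})$ shows the right-hand side is at most $(1+c)\,c^{-1}\big(R\sum_{k=n+1}^{\infty}c^{k} + \sum_{k=n+1}^{\infty}c^{k}\rho(a_0,z_{\omega_k})\big)$. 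Both tails tend to $0$ as $n\to\infty$ (the first because $\sum_k c^k<\infty$, the second because $\sum_k c^k\rho(a_0,z_{\omega_k})<\infty$ by the base-point independence noted in the previous step), so $\rho_H(f_{\omega_{[1,n]}}(A),\{\pi(\omega)\}) \to 0$, which is the claim.

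The only step that is not purely mechanical is this last reduction: one must exploit the boundedness of the compact set $A$ to replace every $a \in A$ by the single point $a_0$, and then invoke the base-point independence of the convergence-rate series (Lemma \ref{Cauchyfiniteness}); everything else is the index bookkeeping between $f_{\omega_{[1,n]}}$ and the $f_{[m,m+n]}$ notation of Section 2 together with the elementary formula for the Hausdorff distance to a singleton.
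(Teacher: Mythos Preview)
Your proof is correct and follows essentially the same route as the paper: both reduce to the pointwise rate \eqref{basic_rate_of_naifs} of Lemma \ref{e_contractiveiteration} for the non-autonomous iteration $\{f_{\omega_m}\}$, and then pass from a single point to a general compact $A$. The only cosmetic difference is that the paper handles $A$ via the Hausdorff-distance contraction $\rho_H(f_{\omega_{[1,n]}}(\{x\}),f_{\omega_{[1,n]}}(A))\le c^n\rho_H(\{x\},A)$ and a triangle inequality, whereas you take the supremum formula $\rho_H(B,\{a'\})=\sup_{a\in B}\rho(a,a')$ and reduce to a fixed $a_0\in A$; these yield the same estimates.
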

\begin{proof}
    Let $\omega \in \tree$ and we set $g_{m} := f_{\omega_{m}} \ (m \in \natural)$.  
    Note that $\{g_{m}\}_{m \in \natural}$ is a sequence of contractive mappings on $X$ with an uniform contraction constant $c \in (0, 1)$ (see, Definitions \ref{definition_of_nonautonomous_iterations} and \ref{def_of_genral_NIFS}) and satisfies the condition in Lemma \ref{e_contractiveiteration}. 
    Also, note that $\omega_{k} \in I_{k}$ for each $k \in \natural$. 
    By the inequality (\ref{basic_rate_of_naifs}) with $m = 1$, we deduce that 
    \begin{align*}
        &\rho_{H}(f_{\omega_{[1, n]}}(\{x\}), \{\pi(\omega)\})
        =\rho_{H}(\{f_{\omega_{[1, n]}}(x)\}, \{\pi(\omega)\})
        =\rho(f_{\omega_{[1, n]}}(x), \pi(\omega)) \\
        &\leq (1+c) \cdot c^{-1} \sum_{k = n+1}^{\infty} c^{k} \rho(x, z_{\omega_{k}})
        \leq (1+c) \cdot c^{-1} \sum_{k = n+1}^{\infty} \left\{ \max_{i \in I_{n}} \rho(x, z_{i}) \right\} \cdot c^{n}
    \end{align*}
    for all $n \in \natural$, where $z_{\omega_{k}}$ is the unique fixed point of $g_{k} = f_{\omega_{k}} \ (k \in \natural)$ and we use the fact $\pi(\omega)$ is the first element $x_{1}$ of the recursively compatible sequence $\{x_{m}\}_{m \in \natural}$ for $\{g_{m}\}_{m \in \natural}$.  
    In addition, for each $\omega \in \tree$, $A \in \mathcal{K}(X)$ and $n \in \natural$, we have
    \begin{align*}
        \rho_{H}(f_{\omega_{[1, n]}}(\{x\}), f_{\omega_{[1, n]}}(A))
        \leq c^{n} \cdot \rho_{H}(\{x\}, A). 
    \end{align*}
    Therefore, we have proved our proposition. 
\end{proof}
\begin{remark} \label{projection_map_connection_remark}
    Proposition \ref{justification_projection_map} shows the reason why we call $\pi$ the projection map, and that if we only consider the construction of the projection map for general IFSs then we do not assume the boundedness or compactness of $(X, \rho)$ and we can choice any non-empty compact subset as the initial point for each iteration. 
    
    Indeed, the projection map (or the coding map) for IFSs is ``usually" defined by the unique element of the intersection of non-increasing compact subsets generated by contractive mappings.
    For example, in Rempe-Gillen's and Urba\'{n}ski's paper \cite{ReU}, the projection map on compact metric space $(X, \rho)$ is defined by the intersection of non-increasing compact subsets $\{ f_{\omega_{[1, m]}}(X) \}_{m \in \natural}$ generated by sequence of contractive mappings $\{f_{\omega_{m}}\}_{m \in \natural}$. 
    Note that, if a non-increasing sequence $\{A_{m}\}_{m \in \natural}$ of non-empty compact subsets satisfies the condition $\diam_{\rho}(A_{m}) \to 0$ as $m$ tends to infinity, then we deduce that the intersection $ \cap_{m \in \natural} A_{m}$ is a single set and is the limit point of the sequence $\{ A_{m} \}_{m \in \natural}$ in sense of the Hausdorff distance. 
\end{remark}
We now prove the main theorem in this paper. 
\begin{theorem} \label{main}
    Let $(\{ f_{i} \}_{i \in I}, \tree)$ be a general IFS with the uniform contraction constant $c \in (0, 1)$ on a complete metric space $(X, \rho)$ and $z_{i} \in X$ be the unique fixed point of $f_{i} \ (i \in I)$. 
    Suppose that there exists $x \in X$ such that
    \begin{equation*}
            \sum_{n \in \mathbb{N}} \left\{ \max_{i \in I_{n}} \rho(x, z_{i}) \right\} \cdot c^{n} < \infty. 
    \end{equation*}
    Then, we have the following properties: 
    \begin{enumerate}
        \item $\displaystyle \bigcup_{\omega^{\prime}_{[1, n]} \in \tree_{\omega}^{[1, n]}} f_{\omega^{\prime}_{[1, n]}} (\pi(\tree_{\omega \omega^{\prime}})) = \pi(\tree_{\omega})$ for each $\omega \in \tree^{[1, *]}$ \quad and  
        \item for each $A \in \mathcal{K}(X)$, there exists $D^{\prime}(A) > 0$ such that for each $\omega \in \tree^{[1, *]}$
        \begin{align*}
            \rho_{H}( A, \pi(\tree_{\omega}))
            \leq D^{\prime}(A) \cdot \max \left\{ 1,  c^{-(|\omega|+1)} 
            b_{x}(|\omega|+1) \right\}, 
        \end{align*}
    \end{enumerate}
    where $b_{x}(l) := \sum_{k \geq l} \{ \max_{i \in I_{k}}\rho(x, z_{i}) \} \cdot c^{k} \ (l \in \natural)$. 
    In addition, the family of compact subsets $\{K_{\omega}\}_{\omega \in \tree}$ of $X$ with the above properties (i) and (ii) is unique. 
    Moreover, for each $A \in \mathcal{K}(X)$, there exists $D(A) >0$ such that for each $\omega \in \tree^{[1, *]}$ and $n \in \natural$, 
    \begin{align}\label{main_conv_rate}
        \rho_{H}\left( \bigcup_{\omega^{\prime}_{[1, n]} \in \tree_{\omega}^{[1, n]} } f_{\omega^{\prime}_{[1, n]}} (A), \pi(\tree_{\omega}) \right) 
        \leq D(A) \cdot c^{-(|\omega|+1)} \cdot \max \{ c^{n+|\omega|+1}, b_{x}(n+|\omega|+1) \}.  
    \end{align}
\end{theorem}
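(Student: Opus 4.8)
The plan is to establish the two displayed properties (i) and (ii), deduce the rate estimate (\ref{main_conv_rate}) from them, and finally obtain uniqueness from (\ref{main_conv_rate}) by letting $n$ tend to infinity. For (i), the starting point is the purely combinatorial identity
\[
    \tree_{\omega} = \bigcup_{\omega'_{[1,n]} \in \tree_{\omega}^{[1,n]}} \{\, \omega'_{[1,n]}\tau \ | \ \tau \in \tree_{\omega\omega'_{[1,n]}} \,\} \qquad (\omega \in \tree^{[1,*]},\ n \in \natural),
\]
obtained by writing each $\nu \in \tree_{\omega}$ as $\nu = \nu_{[1,n]}\,\nu_{(n,\infty)}$ and checking, via Remark \ref{definition_subtree}, that $\nu_{[1,n]} \in \tree_{\omega}^{[1,n]}$ and $\nu_{(n,\infty)} \in \tree_{\omega\nu_{[1,n]}}$, together with the straightforward converse inclusion. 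Applying $\pi$ (a function, so it commutes with unions) and then the conjugacy $f_{\omega'_{[1,n]}} \circ \pi = \pi \circ \sigma_{\omega'_{[1,n]}}$ on $\tree_{\omega\omega'_{[1,n]}}$ from Theorem \ref{Lipschitz_of_projection_map} gives $\pi(\tree_{\omega}) = \bigcup_{\omega'_{[1,n]} \in \tree_{\omega}^{[1,n]}} f_{\omega'_{[1,n]}}(\pi(\tree_{\omega\omega'_{[1,n]}}))$; each term is a continuous image of a compact set and the union is finite since $\#(\tree_{\omega}^{[1,n]}) < \infty$, so the right-hand side lies in $\mathcal{K}(X)$.

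For (ii), fix $\omega \in \tree^{[1,*]}$ and $\tau \in \tree_{\omega}$. Since $\tau_{m} \in I_{m}^{\omega} \subset I_{|\omega|+m}$ by Remark \ref{projection_map_subtree}, the sequence $\{f_{\tau_{m}}\}_{m \in \natural}$ satisfies the hypothesis of Lemma \ref{further_e_contractiveiteration} with base point $x$ and $a(m) := c^{m} \max_{i \in I_{|\omega|+m}} \rho(x, z_{i})$, for which $\sum_{m \in \natural} a(m) = c^{-|\omega|} b_{x}(|\omega|+1) < \infty$; note that $a$ does not depend on $\tau$. Property (ii) of that lemma at $m = 1$, with the constant $C(y) = \max\{\rho(y,x),\,1+c\}$ read off from its proof, gives
\[
    \rho(y, \pi(\tau)) \leq \max\{\rho(y,x),\, 1+c\} \cdot \max\{\, 1,\ c^{-(|\omega|+1)} b_{x}(|\omega|+1) \,\}
\]
uniformly over $\tau \in \tree_{\omega}$ and $y \in X$. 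Then, bounding $\rho_{H}(A, \pi(\tree_{\omega}))$ by $\max\{\, \sup_{a \in A} \rho(a, \pi(\tau_{0})),\ \sup_{\tau \in \tree_{\omega}} \rho(\pi(\tau), a_{0}) \,\}$ for arbitrary fixed $a_{0} \in A$, $\tau_{0} \in \tree_{\omega}$, using $\rho(a, \pi(\tau_{0})) \leq \rho(a,x) + \rho(x, \pi(\tau_{0}))$, and noting $R_{A} := \sup_{a \in A} \rho(a,x) < \infty$ by compactness of $A$, yields (ii) with $D'(A) := R_{A} + 1 + c$.

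For the rate estimate (\ref{main_conv_rate}), I would combine (i) with the elementary facts that $\rho_{H}(\bigcup_{j} U_{j}, \bigcup_{j} V_{j}) \leq \max_{j} \rho_{H}(U_{j}, V_{j})$ for finite index sets and $\rho_{H}(f_{\omega'_{[1,n]}}(A), f_{\omega'_{[1,n]}}(B)) \leq c^{n} \rho_{H}(A, B)$; applying (i) and then (ii) to the subtrees $\tree_{\omega\omega'_{[1,n]}}$ (of depth $|\omega|+n$),
\[
    \rho_{H}\!\left( \bigcup_{\omega'_{[1,n]} \in \tree_{\omega}^{[1,n]}} f_{\omega'_{[1,n]}}(A),\ \pi(\tree_{\omega}) \right) \leq c^{n} D'(A) \max\{\, 1,\ c^{-(|\omega|+n+1)} b_{x}(|\omega|+n+1) \,\},
\]
and distributing $c^{n}$ inside the maximum rewrites the right-hand side as $D'(A) \, c^{-(|\omega|+1)} \max\{\, c^{n+|\omega|+1},\ b_{x}(n+|\omega|+1) \,\}$, so $D(A) := D'(A)$ works. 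For uniqueness, if $\{K_{\omega}\}$ is another family satisfying (i) and (ii) (with $\pi(\tree_{\bullet})$ replaced by $K_{\bullet}$), then (i) gives $K_{\omega} = \bigcup_{\omega'_{[1,n]}} f_{\omega'_{[1,n]}}(K_{\omega\omega'_{[1,n]}})$ for every $n$, so by the same two facts $\rho_{H}(\pi(\tree_{\omega}), K_{\omega}) \leq c^{n} \max_{\omega'_{[1,n]}} \rho_{H}(\pi(\tree_{\omega\omega'_{[1,n]}}), K_{\omega\omega'_{[1,n]}})$; bounding each term through the triangle inequality via the fixed set $\{x\}$ and invoking (ii) for both families yields $\rho_{H}(\pi(\tree_{\omega}), K_{\omega}) \leq E \max\{\, c^{n},\ c^{-(|\omega|+1)} b_{x}(|\omega|+n+1) \,\}$ with $E > 0$ independent of $n$; letting $n \to \infty$, so that $c^{n} \to 0$ and $b_{x}(|\omega|+n+1) \to 0$ as the tail of a convergent series, forces $\rho_{H}(\pi(\tree_{\omega}), K_{\omega}) = 0$, i.e.\ $K_{\omega} = \pi(\tree_{\omega})$.

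I expect the main difficulty to lie in the bookkeeping of step (ii): one must verify that the constant furnished by Lemma \ref{further_e_contractiveiteration} is genuinely uniform over all $\tau \in \tree_{\omega}$ (which works because the dominating sequence $a$ depends only on $I_{|\omega|+\bullet}$, not on $\tau$), carefully track the index shift by $|\omega|$ that relates $b_{x}$ for the subtree $\tree_{\omega}$ to $b_{x}$ for $\tree$, and handle both directions of the Hausdorff distance, where the compactness of $A$ is exactly what makes $R_{A}$ finite.
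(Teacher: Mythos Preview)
Your proof is correct and rests on the same underlying estimates as the paper, but the logical organisation differs in a way worth noting. The paper first isolates two auxiliary Hausdorff-distance inequalities comparing $\bigcup_{\omega'_{[1,n]}} f_{\omega'_{[1,n]}}(\{x\})$ to $\bigcup_{\omega'_{[1,n]}} f_{\omega'_{[1,n]}}(A)$ and to $\pi(\tree_{\omega})$ respectively (its (4.2) and (4.3)), proving the second by invoking inequality (\ref{asym_bdd_of_x_m}) directly at level $m = |\omega|+n+1$; it then derives (ii), uniqueness, and the rate estimate (\ref{main_conv_rate}) from these, always passing through the singleton $\{x\}$ as an intermediate set. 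You instead obtain (ii) by applying Lemma \ref{further_e_contractiveiteration} (which packages (\ref{asym_bdd_of_x_m})) uniformly over $\tau \in \tree_{\omega}$, and then observe that (\ref{main_conv_rate}) is an immediate corollary of (i) and (ii) together, passing through $\pi(\tree_{\omega\omega'_{[1,n]}})$ rather than $\{x\}$; this yields $D(A) = D'(A)$ for free and avoids re-deriving the pointwise estimates inside the Hausdorff argument. Your route is slightly more modular, while the paper's makes the role of the base point $x$ more explicit at each step; substantively the two are equivalent, since both reduce to the same bound $c^{m}\rho(x, x_{m}) \leq (1+c)\, l_{x}(m)$ from Lemma \ref{e_contractiveiteration}.
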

\begin{remark}
    Note that it is necessary to assume that $\sum_{n \in \mathbb{N}} \left\{ \max_{i \in I_{n}} \rho(x, z_{i}) \right\} \cdot c^{n} < \infty$ for some $x \in X$. 
    Indeed, if the tree is a single set then the general IFSs is reduced to the non-autonomous iteration (see, Example \ref{1dimex}). 
    
    Under the condition, we obtain the inequality (\ref{main_conv_rate}) which shows that for all $\omega \in \tree^{[1, *]}$, $\{ \bigcup_{\omega^{\prime}_{[1, n]} \in \tree_{\omega}^{[1, n]} } f_{\omega^{\prime}_{[1, n]}}(A) \}_{n \in  \natural}$ subsets converges to $\pi(\tree_{\omega})$ as $n$ tends to infinity in sense of the Hausdorff distance. 
    In addition, the inequality (\ref{main_conv_rate}) shows that the limit point does not depend on the starting point $A \in \mathcal{K}(X)$ and the starting point depends on only the constant of the convergence rate (note that the base point $x \in X$ depends on the convergence rate). 
    However, the convergence is not always a exponentially fast rate. 
    Indeed, if the tree is a single set then the general IFSs is reduced to the non-autonomous iteration (see, Example \ref{polyfastconvexample}). 
\end{remark}
\begin{proof}[proof of Theorem \ref{main}]
    We first show that the property (i) in Theorem \ref{main}. 
    Indeed, let $\omega \in \tree^{[1, *]}$ and $n \in \natural$. 
    Note that by the definition of sub-trees (see, Remark \ref{definition_subtree}), $\omega \omega^{\prime}_{[1, n]} \in \tree^{[1, |\omega| + n]}$ and the set $\tree_{\omega \omega^{\prime}_{[1, n]}}$ is well-defined for each for each $\omega^{\prime}_{[1, n]} \in \tree_{\omega}^{[1, n]}$, and we deduce that  $\bigcup_{\omega^{\prime}_{[1, n]} \in \tree_{\omega}^{[1, n]}} \{ \omega^{\prime}_{[1, n]} \} \times \tree_{\omega \omega^{\prime}_{[1, n]}} = \tree_{\omega}$. 
    Then, by Theorem \ref{Lipschitz_of_projection_map}, 
    \begin{align*}
        \bigcup_{\omega^{\prime}_{[1, n]} \in \tree_{\omega}^{[1, n]}} f_{\omega^{\prime}_{[1, n]}} (\pi(\tree_{\omega \omega^{\prime}}))
        =\bigcup_{\omega^{\prime} \in \tree_{\omega}^{[1, n]}} \pi(\sigma_{\omega^{\prime}_{[1, n]}}(\tree_{\omega \omega^{\prime}}))
        = \pi\left( \bigcup_{\omega^{\prime} \in \tree_{\omega}^{[1, n]}} \{ \omega^{\prime} \} \times \tree_{\omega \omega^{\prime}} \right)
        =\pi(\tree_{\omega}). 
    \end{align*}
    Therefore, we have proved the property (i) in Theorem \ref{main}. 
    Note that 
    $f_{\omega^{\prime}}(\pi(\tree_{\omega})) \in \mathcal{K}(X)$ since $f_{\omega^{\prime}}$ is continuous on $X$ for each $\omega^{\prime} \in \tree_{\omega}^{[1, *]}$. 
    
    We next show that for each $n \in \natural$, $\omega \in \tree^{[1, *]}$ and $A \in \mathcal{K}(X)$, 
    \begin{align} 
        &\rho_{H}\left( \bigcup_{\omega^{\prime}_{[1, n]} \in \tree_{\omega}^{[1, n]} } f_{\omega^{\prime}_{[1, n]}} ( \{ x \} ), \bigcup_{\omega^{\prime}_{[1, n]} \in \tree_{\omega}^{[1, n]} }f_{\omega^{\prime}_{[1, n]}} (A) \right) 
        \leq c^{n} \cdot \sup_{a \in A} \rho(x, a) \quad \text{and} \quad \label{basic_inequality1_in_main_theorem} \\
        &\rho_{H}\left( \bigcup_{\omega^{\prime}_{[1, n]} \in \tree_{\omega}^{[1, n]} } f_{\omega^{\prime}_{[1, n]}} ( \{ x \} ), \pi(\tree_{\omega}) \right)
        \leq (1+c) \cdot c^{-(|\omega|+1)} \sum_{k = |\omega|+n+1}^{\infty} c^{k} \max_{i \in I_{k}} \rho(x, z_{i}). \label{basic_inequality2_in_main_theorem}
    \end{align}
    Indeed, by properties of the Hausdorff distance, we have
    \begin{align*}
        &\rho_{H}\left( \bigcup_{\omega^{\prime}_{[1, n]} \in \tree_{\omega}^{[1, n]} } f_{\omega^{\prime}_{[1, n]}} ( \{ x \} ), \bigcup_{\omega^{\prime}_{[1, n]} \in \tree_{\omega}^{[1, n]} }f_{\omega^{\prime}_{[1, n]}} (A) \right)
        \leq \max_{\omega^{\prime}_{[1, n]} \in \tree^{[1, n]}} \rho_{H}\left( f_{\omega^{\prime}_{[1, n]}} (\{x\}), f_{\omega^{\prime}_{[1, n]}} (A) \right) \\ 
        &\leq \max_{\omega^{\prime}_{[1, n]} \in \tree_{\omega}^{[1, n]}} c^{n} \cdot \rho_{H}(\{x\}, A)
        \leq c^{n} \cdot \rho_{H}(\{x\}, A)
        \leq c^{n} \cdot \sup_{a \in A} \rho(x, a). 
    \end{align*}
    In addition, by the property (i) in Theorem \ref{main} and the above argument, we have 
    \begin{align*}
        &\rho_{H}\left( \bigcup_{\omega^{\prime}_{[1, n]} \in \tree_{\omega}^{[1, n]} } f_{\omega^{\prime}_{[1, n]}} ( \{ x \} ), \pi(\tree_{\omega}) \right)
        \leq 
        c^{n} \cdot \sup_{\tau \in \tree_{\omega\omega^{\prime}_{[1, n]}}} \rho \left( x, \pi(\tau) \right). 
    \end{align*}
    On the other hand, recall that $\omega\omega^{\prime}_{[1, n]}\tau \in \tree$ and the first element of the recursively compatible sequence for $\{f_{\tau_{m}}\}_{m \in \natural}$ is the $(|\omega|+n+1)$-th element of the recursively compatible sequence for $\{f_{(\omega\omega^{\prime}_{[1, n]}\tau)_{m}}\}_{m \in \natural}$.  
    By the property (ii) in Lemma \ref{e_contractiveiteration} (or the inequality (\ref{asym_bdd_of_x_m})) with $y  = x$ and $m = |\omega|+n+1$, it follows that
    \begin{align*}
        c^{n} \cdot \sup_{\tau \in \tree_{\omega\omega^{\prime}_{[1, n]}}} \rho \left( x, \pi(\tau) \right)
        &\leq c^{n} \cdot (1+c) \cdot c^{-(|\omega|+n+1)} \sum_{k = |\omega|+n+1}^{\infty} c^{k} \rho(x, z_{(\omega\omega^{\prime}_{[1, n]}\tau)_{k}}) \\
        &\leq (1+c) \cdot c^{-(|\omega|+1)} \sum_{k = |\omega|+n+1}^{\infty} c^{k} \max_{i \in I_{k}} \rho(x, z_{i}). 
    \end{align*}
    Therefore, we have proved the inequalities  (\ref{basic_inequality1_in_main_theorem}) and (\ref{basic_inequality2_in_main_theorem}). 
    
    Now, we show the property (ii) in Theorem \ref{main} and the uniqueness of the family of the compact sets with the properties (i) and (ii) in Theorem \ref{main}. 
    
    To show the property (ii) in Theorem \ref{main}, let $A \in \mathcal{K}(X)$ and $\omega \in \tree^{[1, *]}$, and we set 
    $D^{\prime}(A) := \max \{ (1+c), \sup_{a \in A} \rho(x,a) \} \ ( > 0)$.  
    Note that $\omega^{\prime}_{1} \in I_{|\omega|+1}$ for each $\omega^{\prime}_{1} \in \tree_{\omega}^{1}$ and by Lemma \ref{collage} we have 
    \begin{align} 
        &\rho_{H}\left( \{x\}, \bigcup_{\omega^{\prime}_{1} \in \tree_{\omega}^{1}} f_{\omega^{\prime}_{1}}(\{x\})\right)
        \leq \max_{\omega^{\prime}_{1} \in \tree_{\omega}^{1}} \rho_{H}\left( \{x\}, f_{\omega^{\prime}_{1}}(\{x\})\right)
        = \max_{\omega^{\prime}_{1} \in \tree_{\omega}^{1}} \rho_{H}\left( \{x\}, \{ f_{\omega^{\prime}_{1}}(x) \} \right) \notag \\
        &\quad = \max_{\omega^{\prime}_{1} \in \tree_{\omega}^{1}} \rho\left( x, f_{\omega^{\prime}_{1}}(x) \right)
        = \max_{\omega^{\prime}_{1} \in \tree_{\omega}^{1}} (1+c) \cdot \rho\left( x, z_{\omega^{\prime}_{1}} \right)
        \leq (1+c) \cdot \max_{i \in I_{|\omega|+1}} \rho\left( x, z_{i} \right). \label{property2_lemma_main_theorem} 
    \end{align}
    By the inequality (\ref{basic_inequality1_in_main_theorem}) with $n = 1$ and the above inequality, we have
    \begin{align*}
        \rho_{H}( \{ x \}, \pi(\tree_{\omega}))
        &\leq \rho_{H} \left( \{ x \},  \bigcup_{\omega^{\prime}_{1} \in \tree_{\omega}^{1}} f_{\omega^{\prime}_{1}}(\{x\}) \right)
        +\rho_{H}\left( \bigcup_{\omega^{\prime}_{1} \in \tree_{\omega}^{1} } f_{\omega^{\prime}_{1}} ( \{ x \} ), \pi(\tree_{\omega}) \right) \\
        &\leq (1+c) \cdot \max_{i \in I_{|\omega|+1}} \rho\left( x, z_{i} \right) + (1+c) \cdot c^{-(|\omega|+1)} \sum_{k = |\omega|+2}^{\infty} c^{k} \max_{i \in I_{k}} \rho(x, z_{i}) \\
        &= (1+c) \cdot c^{-(|\omega|+1)} b_{x}(|\omega|+1). 
    \end{align*}
    It follows that
    \begin{align*}
        &\rho_{H}( A, \pi(\tree_{\omega}))
        \leq \rho_{H}( A, \{x\}) + \rho(\{x\}, \pi(\tree_{\omega})) 
        \leq D^{\prime}(A) \cdot \max \left\{ 1,  c^{-(|\omega|+1)} b_{x}(|\omega|+1) \right\}. 
    \end{align*}
    
    To show the uniqueness of the family of the compact sets with the properties (i) and (ii) in Theorem \ref{main}, let $\{K_{\omega}\}_{\omega \in \tree^{[1, *]}}$ be a family of the compact sets with the properties. 
    Also, let $\omega \in \tree^{[1, *]}$ and $n \in \natural$. 
    Then, by the property (i) in Theorem \ref{main} and properties of the Hausdorff distance, we have
    \begin{align*}
        \rho_{H}(\pi(\tree_{\omega}), K_{\omega})
        &\leq \rho_{H}\left(\bigcup_{\omega^{\prime}_{[1, n]} \in \tree_{\omega}^{[1, n]} } f_{\omega^{\prime}_{[1, n]}}(\pi(\tree_{\omega\omega^{\prime}_{[1, n]}})), \bigcup_{\omega^{\prime}_{[1, n]} \in \tree_{\omega}^{[1, n]} } f_{\omega^{\prime}_{[1, n]}}(\{x\})\right) \\
        &\qquad + \rho_{H}\left(\bigcup_{\omega^{\prime}_{[1, n]} \in \tree_{\omega}^{[1, n]} } f_{\omega^{\prime}_{[1, n]}}(\{x\}), \bigcup_{\omega^{\prime}_{[1, n]} \in \tree_{\omega}^{[1, n]} } f_{\omega^{\prime}_{[1, n]}}(K_{\omega\omega^{\prime}_{[1, n]}}) \right) \\
        &\leq c^{n} \rho_{H}\left( \pi( \tree_{\omega\omega^{\prime}_{[1, n]}}), \{x\} \right) + c^{n} \rho_{H}( \{x\} , K_{\omega\omega^{\prime}_{[1, n]}} ). 
    \end{align*}
    Moreover, by the property (ii), we deduce that 
    \begin{align*}
        &c^{n} \rho_{H} \left( \{x\}, \pi(\tree_{\omega\omega^{\prime}_{[1, n]}}) \right)
        \leq D^{\prime}(\{x\}) \cdot c^{n} \cdot \max \left\{ 1,  c^{-(|\omega\omega^{\prime}_{[1, n]}|+1)} 
        b_{x}(|\omega\omega^{\prime}_{[1, n]}|+1)
        \right\} \\
        &\qquad = D^{\prime}(\{x\}) \cdot c^{-(|\omega|+1)} \cdot \max \left\{ c^{|\omega|+n+1}, 
        b_{x}(|\omega|+n+1)
        \right\} \to 0 
    \end{align*}
    as $n$ tends to infinity, and by the same argument $c^{n} \rho_{H}( \{x\}, K_{\omega\omega^{\prime}_{[1, n]}}) \to 0$ as $n$ tends to infinity. 
    It follows that $\pi(\tree_{\omega}) = K_{\omega}$ for each $\omega \in \tree^{[1, *]}$. 
    Thus, we have proved the property (ii) in Theorem \ref{main} and the uniqueness of the family of the compact sets with the properties (i) and (ii) in Theorem \ref{main}. 
    
    We finally show that the inequality (\ref{main_conv_rate}). 
    To show this, let $A \in \mathcal{K}(X)$, $\omega \in \tree^{[1, *]}$ and $n \in \natural$. 
    We set $D(A) := \max\{ (1+c), \sup_{a \in A} \rho(x, a) \} \ (> 0)$. 
    Then, by the inequalities (\ref{basic_inequality1_in_main_theorem}) and  (\ref{basic_inequality2_in_main_theorem}), we have
    \begin{align*}
        &\rho_{H}\left( \bigcup_{\omega^{\prime}_{[1, n]} \in \tree_{\omega}^{[1, n]} } f_{\omega^{\prime}_{[1, n]}} (A), \pi(\tree_{\omega}) \right) \\
        &\leq \rho_{H}\left( \bigcup_{\omega^{\prime}_{[1, n]} \in \tree_{\omega}^{[1, n]} } \! \! \! f_{\omega^{\prime}_{[1, n]}} (A), \bigcup_{\omega^{\prime}_{[1, n]} \in \tree_{\omega}^{[1, n]} } \! \! \! f_{\omega^{\prime}_{[1, n]}} (\{ x \}) \right) + \rho_{H}\left( \bigcup_{\omega^{\prime}_{[1, n]} \in \tree_{\omega}^{[1, n]} } \! \! \! f_{\omega^{\prime}_{[1, n]}} ( \{ x \} ), \pi(\tree_{\omega}) \right)\\
        &\leq c^{n} \cdot \sup_{a \in A} \rho(x, a) + (1+c) \cdot c^{-(|\omega|+1)} \sum_{k = |\omega|+n+1}^{\infty} c^{k} \max_{i \in I_{k}} \rho(x, z_{i}) \\
        &\leq D(A) \cdot c^{-(|\omega|+1)} \cdot \max \{ c^{n+|\omega|+1}, b_{x}(n+|\omega|+1) \}.  
    \end{align*}
    Hence, we have proved our theorem. 
\end{proof}
\begin{remark} \label{limit_set_boundedness_remark}
    In the paper \cite{BHS1}, the limit set is defined by the limit point of the sequence $\{ \bigcup_{\omega^{\prime}_{[1, n]} \in \tree_{\omega}^{[1, n]} } f_{\omega^{\prime}_{[1, n]}}(A) \}_{n \in  \naturalz}$ of non-empty compact subsets with the uniform contractivity condition and the following condition: $\sup_{i \in I} \rho(f_{i}(x), x) < \infty$ for some $x \in X$.  
        
    However, Theorem \ref{main} shows that if we only consider the existence and uniqueness family of the limit sets for general IFSs, we obtain the existence and uniqueness of family of the limit sets under the weaker conditions than under the above condition. 
    Indeed, by Lemma \ref{collage}, 
    \begin{align*}
        \sup_{i \in I} \rho(x, z_{i})
        \leq \frac{1}{(1-c)} \sup_{i \in I} \rho(f_{i}(x), x) < \infty 
    \end{align*}
    and it follows that $\sum_{n \in \mathbb{N}} \left\{ \max_{i \in I_{n}} \rho(x, z_{i}) \right\} \cdot c^{n} < \infty$. 
    
    In papers \cite{Mo}, \cite{HZ}, \cite{GM} and \cite{HRWW}, the limit set is generated by the compatible compact subsets (that is, the limit set (the Moran set) is generated by the basic sets with the Moran structure).  
    However, the family of limit sets $\{ \pi(\tree) \}_{\omega \in \tree^{[1, *]}}$ for general IFSs in this paper is compatible with definition of the Moran structure $\{ \pi(\tree_{\omega}) \}_{ \omega \in \tree^{[1, *]} }$ and equals the Moran sets. 
    Indeed, by Lemma \ref{projection_map_conti_lemma}, $\pi(\tree_{\omega})$ is compact for each $\omega \in \tree^{[1, *]}$ and $\diam_{\rho}(\pi(\tree_{\omega}))$ converges $0$ as $|\omega|$ tends to $\infty$ uniformly with respect to $\omega \in \tree^{[1, *]}$. 
    In addition, by Theorem \ref{main}, we have $f_{i}(\pi(\tree_{\omega i})) \subset \pi(\tree_{\omega})$ and $f_{i}(\pi(\tree_{\omega})) = \pi(\tree_{\omega i})$ for each $\omega \in \tree^{[1, *]}$ and $i \in S(\tree, \omega)$. 
\end{remark}
We finally show the following corollary of Theorem \ref{main}.  
\begin{corollary} \label{maincor}
    Let $(\{ f_{i} \}_{i \in I}, \tree)$ be a general IFS with the uniform contraction constant $c \in (0, 1)$ on a complete metric space $(X, \rho)$ and $z_{i} \in X$ be the unique fixed point of $f_{i} \ (i \in I)$. 
    Suppose that there exists $x \in X$ such that 
    \begin{equation*}
        \alpha := \limsup_{n \to \infty} \sqrt[n]{\max_{i \in I_{n}} \rho(x, z_{i}) } < \frac{1}{c}. 
    \end{equation*}
    Then, for each $r \in \{ r > 0 \ | \ c \leq r < 1, \alpha c < r \}$, we have the following properties: 
    \begin{enumerate}
        \item $\displaystyle \bigcup_{\omega^{\prime}_{[1, n]} \in \tree_{\omega}^{[1, n]}} f_{\omega^{\prime}_{[1, n]}} (\pi(\tree_{\omega \omega^{\prime}})) = \pi(\tree_{\omega})$ for each $\omega \in \tree^{[1, *]}$ \quad and  
        \item $(c/r)^{|\omega|+1} \cdot \rho_{H}( A, \pi(\tree_{\omega})) \ (\omega \in \tree^{[1, *]})$ is bounded for each $A \in \mathcal{K}(X)$. 
    \end{enumerate}
    In addition, the family of compact subsets $\{K_{\omega}\}_{\omega \in \tree}$ of $X$ with the above properties (i) and (ii) is unique. 
    Moreover, for all for all $A \in \mathcal{K}(X)$, $\omega \in \tree^{[1, *]}$, the sequence $\{ \bigcup_{\omega^{\prime}_{[1, n]} \in \tree^{[1, n]}_{\omega}} f_{\omega^{\prime}_{[1, n]}}(A)\}_{n \in \natural}$ of compact subsets converges to $\pi(\tree_{\omega})$ as $n$ tends to infinity exponentially fast with the rate $r$, in sense of the Hausdorff distance. 
\end{corollary}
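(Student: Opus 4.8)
The plan is to deduce everything from Theorem \ref{main} by converting the root-test hypothesis into an exponential (geometric) majorant, in exactly the way Corollary \ref{expfastsufficond} and Remark \ref{expfast_remark} handle non-autonomous iterations. Fix $r$ with $c \le r < 1$ and $\alpha c < r$; such $r$ exists because $\alpha c < 1$. First I would show that there is $C^{\prime} > 0$ with $\bigl\{ \max_{i \in I_{n}} \rho(x, z_{i}) \bigr\} \cdot c^{n} \le C^{\prime} r^{n}$ for every $n \in \natural$: since $\alpha < r/c$, the definition of $\limsup$ gives $M \in \natural$ with $\sqrt[n]{\max_{i \in I_{n}} \rho(x, z_{i})} < r/c$, equivalently $c^{n} \max_{i \in I_{n}} \rho(x, z_{i}) < r^{n}$, for all $n \ge M$ (the inequality holds trivially, regardless of $M$, when the maximum vanishes), and then one enlarges the constant to absorb the finitely many indices $n < M$, as in the proof of Corollary \ref{expfastsufficond}.

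Consequently $\sum_{n \in \natural} \bigl\{ \max_{i \in I_{n}} \rho(x, z_{i}) \bigr\} c^{n} \le C^{\prime}/(1-r) < \infty$, so the hypothesis of Theorem \ref{main} is satisfied. This immediately gives property (i) (it is literally property (i) of Theorem \ref{main}), the estimate (\ref{main_conv_rate}), and the bound $b_{x}(l) \le \frac{C^{\prime}}{1-r} r^{l}$ for all $l$. Substituting this bound, together with $c^{l} \le r^{l}$, into (\ref{main_conv_rate}) yields
\[
\rho_{H}\Bigl( \bigcup_{\omega^{\prime}_{[1,n]} \in \tree_{\omega}^{[1,n]}} f_{\omega^{\prime}_{[1,n]}}(A),\ \pi(\tree_{\omega}) \Bigr) \le D(A) \max\bigl\{ 1, \tfrac{C^{\prime}}{1-r} \bigr\} \, (r/c)^{|\omega|+1} \, r^{n},
\]
which for each fixed $\omega$ is precisely exponential convergence to $\pi(\tree_{\omega})$ at rate $r$ (the ``moreover'' claim); and substituting the same bound into property (ii) of Theorem \ref{main} gives $\rho_{H}(A, \pi(\tree_{\omega})) \le D^{\prime}(A) \max\{ 1, \tfrac{C^{\prime}}{1-r} \} (r/c)^{|\omega|+1}$, using $r \ge c$ so that $(r/c)^{|\omega|+1} \ge 1$; this is exactly property (ii) of the corollary, with the bound uniform in $\omega$.

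For the uniqueness I would repeat the uniqueness argument in the proof of Theorem \ref{main} verbatim: if $\{K_{\omega}\}_{\omega \in \tree^{[1,*]}}$ satisfies properties (i) and (ii), then property (i) together with the uniform contractivity gives
\[
\rho_{H}(\pi(\tree_{\omega}), K_{\omega}) \le c^{n} \rho_{H}\bigl( \pi(\tree_{\omega\omega^{\prime}_{[1,n]}}), \{x\} \bigr) + c^{n} \rho_{H}\bigl( \{x\}, K_{\omega\omega^{\prime}_{[1,n]}} \bigr),
\]
and now property (ii) (applied with $A = \{x\}$) bounds each term by a constant times $(r/c)^{|\omega|+n+1} c^{n} = \mathrm{const} \cdot (r/c)^{|\omega|+1} r^{n}$, which tends to $0$ as $n \to \infty$ because $r < 1$; hence $\pi(\tree_{\omega}) = K_{\omega}$ for every $\omega$.

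I do not expect a genuine obstacle here, since the argument is essentially a transcription of Corollary \ref{expfastsufficond} into the limit-set setting. The only points that need care are the bookkeeping required to keep all constants uniform in $\omega$ — which works because $b_{x}$, $D$ and $D^{\prime}$ in Theorem \ref{main} do not depend on $\omega$ — the trivial case $\max_{i \in I_{n}} \rho(x, z_{i}) = 0$ in the root test, and verifying that the admissible set of rates $r$ is nonempty (which follows from $\alpha c < 1$).
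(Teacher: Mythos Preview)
Your proposal is correct and follows essentially the same route as the paper: convert the root-test hypothesis into a geometric majorant $\{\max_{i\in I_n}\rho(x,z_i)\}c^n\le C' r^n$ exactly as in Corollary~\ref{expfastsufficond}, invoke Theorem~\ref{main}, and then substitute the bound $b_x(l)\le \frac{C'}{1-r}r^l$ throughout. The only cosmetic difference is that you plug the geometric bound directly into the \emph{conclusions} of Theorem~\ref{main} (property~(ii) and inequality~(\ref{main_conv_rate})), whereas the paper goes back to the intermediate inequalities (\ref{basic_inequality1_in_main_theorem})--(\ref{property2_lemma_main_theorem}) from the \emph{proof} of Theorem~\ref{main} and re-derives the exponential versions from scratch; your version is slightly more economical but the argument is the same.
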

\begin{proof}
    Let $r \in \{ r > 0 \ | \ c \leq r < 1, \alpha c < r \}$. 
    By the similar argument in the proof of Corollary \ref{expfastsufficond}, there exist $D^{\prime} > 0$ such that  
    \begin{align} \label{expfastconvcondition_Hausdorff}
        \left\{ \max_{i \in I_{n}} \rho(x^{\prime}, z_{i}) \right\} \cdot c^{n} \leq D^{\prime} \cdot r^{n}
    \end{align}
    for all $n \in \mathbb{N}$. 
    It follows that the condition in Theorem \ref{main} is satisfied and we obtain the property (i)  in Corollary \ref{maincor} for each $\omega \in \tree^{[1, *]}$, and by the inequalities (\ref{basic_inequality1_in_main_theorem}), (\ref{basic_inequality2_in_main_theorem}), (\ref{property2_lemma_main_theorem}) and (\ref{expfastconvcondition_Hausdorff}) it follows that
    for each $n \in \natural$, $\omega \in \tree^{[1, *]}$ and $A \in \mathcal{K}(X)$, 
    \begin{align} 
        &\rho_{H}\left( \bigcup_{\omega^{\prime}_{[1, n]} \in \tree_{\omega}^{[1, n]} } f_{\omega^{\prime}_{[1, n]}} ( \{ x \} ), \bigcup_{\omega^{\prime}_{[1, n]} \in \tree_{\omega}^{[1, n]} }f_{\omega^{\prime}_{[1, n]}} (A) \right) 
        &\leq c^{n} \cdot \sup_{a \in A} \rho(x, a), \label{basic_inequality1_in_main_cor}
    \end{align}
    \begin{align}
        \rho_{H}\left( \bigcup_{\omega^{\prime}_{[1, n]} \in \tree_{\omega}^{[1, n]} } f_{\omega^{\prime}_{[1, n]}} ( \{ x \} ), \pi(\tree_{\omega}) \right)
        &\leq (1+c) \cdot c^{-(|\omega|+1)} \sum_{k = |\omega|+n+1}^{\infty} D^{\prime} \cdot r^{k} \quad \text{and}
        \label{basic_inequality2_in_main_cor} 
    \end{align}
    \begin{align} 
        &\rho_{H}\left( \{x\}, \bigcup_{\omega^{\prime}_{1} \in \tree_{\omega}^{1}} f_{\omega^{\prime}_{1}}(\{x\})\right)
        \leq (1+c) \cdot \max_{i \in I_{|\omega|+1}} \rho\left( x, z_{i} \right)
        \leq (1+c) \cdot D^{\prime} \cdot \left( \frac{r}{c} \right)^{|\omega|+1}. 
        \label{property2_lemma_main_cor} 
    \end{align}
    
    We next show the properties (ii) in Corollary \ref{maincor}. 
    Let $A \in \mathcal{K}(X)$ and $\omega \in \tree^{[1, *]}$. 
    By the inequalities (\ref{property2_lemma_main_cor}) and (\ref{basic_inequality2_in_main_cor}) with $n = 1$, it follows that
    \begin{align}
        \rho_{H}( A, \pi(\tree_{\omega})) \notag
        &\leq \rho_{H}(A, \{x\}) + \rho_{H}\left( \{x\}, \bigcup_{\omega^{\prime}_{1} \in \tree_{\omega}^{1}} f_{\omega^{\prime}_{1}}(\{x\}) \right) + \rho_{H}\left( \bigcup_{\omega^{\prime}_{1} \in \tree_{\omega}^{1}} f_{\omega^{\prime}_{1}}(\{x\}), \pi(\tree_{\omega}) \right) \notag \\
        &\leq \sup_{a \in A} \rho(a, x) + (1+c) \cdot D^{\prime} \cdot \left( \frac{r}{c} \right)^{|\omega|+1} + (1+c) \cdot c^{-(|\omega|+1)} \sum_{k = |\omega|+2}^{\infty} D^{\prime} \cdot r^{k} \notag \\
        &\leq \sup_{a \in A} \rho(a, x) + (1+c) D^{\prime} \cdot c^{-(|\omega|+1)} \sum_{k = |\omega|+1}^{\infty} r^{k} \notag \\
        &\leq \max \left\{ \sup_{a \in A} \rho(a, x), \frac{1+c}{1-r} D^{\prime} \right\} \cdot \left( \frac{r}{c} \right)^{|\omega|+1}. \label{property2_rigorous_main_cor}
    \end{align}
    Thus, we have proved the properties (ii) in Corollary \ref{maincor}. 
    
    We now show that the uniqueness of the family of the compact sets with the properties (i) and (ii) in Corollary \ref{maincor}, let $\{K_{\omega}\}_{\omega \in \tree^{[1, *]}}$ be a family of the compact sets with the properties. 
    Also, Let $\omega \in \tree^{[1, *]}$ and $n \in \natural$. 
    Then, by the same argument in the proof of the uniqueness of in Theorem \ref{main}, we have
    \begin{align*}
        \rho_{H}(\pi(\tree_{\omega}), K_{\omega})
        \leq c^{n} \rho_{H}\left( \pi( \tree_{\omega\omega^{\prime}_{[1, n]}}), \{x\} \right) + c^{n} \rho_{H}( \{x\} , K_{\omega\omega^{\prime}_{[1, n]}} ). 
    \end{align*}
    Moreover, by the inequality (\ref{property2_rigorous_main_cor}) with $A := \{x\}$, we deduce that 
    \begin{align*}
        c^{n} \rho_{H} \left( \{x\}, \pi(\tree_{\omega\omega^{\prime}_{[1, n]}}) \right)
        \leq c^{n} \cdot \frac{1+c}{1-r} D^{\prime} \cdot \left( \frac{r}{c} \right)^{|\omega\omega^{\prime}_{[1, n]}|+1}
        = \frac{1+c}{1-r} D^{\prime} \cdot c^{-(|\omega|+1)} \cdot r^{|\omega|+n+1} \to 0 
    \end{align*}
    as $n$ tends to infinity, and by the same argument $c^{n} \rho_{H}( \{x\}, K_{\omega\omega^{\prime}_{[1, n]}}) \to 0$ as $n$ tends to infinity. 
    It follows that $\pi(\tree_{\omega}) = K_{\omega}$ for each $\omega \in \tree^{[1, *]}$. 
    Thus, we have proved the uniqueness of the family of the compact sets with the properties (i) and (ii) in Theorem \ref{main}.     
    
    We finally show that, for all $A \in \mathcal{K}(X)$, $\omega \in \tree^{[1, *]}$, $\{\cup_{\omega_{[1, n]} \in \tree^{n}} f_{\omega_{[1, n]}}(A)\}_{n \in \natural}$ converges to $\pi(\tree)$ as $n$ tends to infinity exponentially fast with the rate $r$, in sense of the Hausdorff distance. 
    To show this, let $A \in \mathcal{K}(X)$, $\omega \in \tree^{[1, *]}$ and $n \in \natural$. 
    Then, by the inequalities (\ref{basic_inequality1_in_main_cor}) and  (\ref{basic_inequality2_in_main_cor}), we deduce that 
    \begin{align*}
        &\rho_{H}\left( \bigcup_{\omega^{\prime}_{[1, n]} \in \tree_{\omega}^{[1, n]} } f_{\omega^{\prime}_{[1, n]}} (A), \pi(\tree_{\omega}) \right) \\
        &\leq \rho_{H}\left( \bigcup_{\omega^{\prime}_{[1, n]} \in \tree_{\omega}^{[1, n]} } \! \! \! f_{\omega^{\prime}_{[1, n]}} (A), \bigcup_{\omega^{\prime}_{[1, n]} \in \tree_{\omega}^{[1, n]} } \! \! \! f_{\omega^{\prime}_{[1, n]}} (\{ x \}) \right) + \rho_{H}\left( \bigcup_{\omega^{\prime}_{[1, n]} \in \tree_{\omega}^{[1, n]} } \! \! \! f_{\omega^{\prime}_{[1, n]}} ( \{ x \} ), \pi(\tree_{\omega}) \right)\\
        &\leq c^{n} \cdot \sup_{a \in A} \rho(x, a) + (1+c) \cdot c^{-(|\omega|+1)} \sum_{k = |\omega|+n+1}^{\infty} D^{\prime} \cdot r^{k} \\
        &\leq \sup_{a \in A} \rho(x, a) \cdot r^{n} + \frac{1+c}{1-r} c^{-(|\omega|+1)} D^{\prime} r^{|\omega|+n+1}  
        \leq \max \left\{ \sup_{a \in A} \rho(x, a), \frac{1+c}{1-r} D^{\prime} \left( \frac{r}{c} \right)^{|\omega|+1} \right\} \cdot r^{n}.  
    \end{align*}
    Hence, we have proved our theorem. 
\end{proof}
\begin{remark}
    By the same argument in Remark \ref{independence_of_alpha}, if $\{ \max_{i \in I_{n}} \rho(x, z_{i}) \}_{n \in \natural}$ is unbounded ( if and only if $\{z_{i} \ | \ i \in \cup_{n \in \natural} I_{n} \ \} \subset X$ is unbounded), then the constant $\alpha \geq 0$ in Corollary \ref{maincor} does not depend on $x \in X$. 
    Note that $\{ \pi(\tree_{\omega}) \}_{\omega \in \tree^{[1, *]}}$ is not uniformly bounded in general even if the assumption in Corollary \ref{maincor} holds. 
    Indeed, if the tree is a single set, then the general IFSs is reduced to the non-autonomous iteration (see, Example \ref{unboundedex}). 
    
    On the other hand, by the same argument in Remark \ref{independence_of_alpha}, we also deduce that if $\{ \max_{i \in I_{n}} \rho(x, z_{i}) \}_{n \in \natural}$ is bounded ( if and only if $\{z_{i} \ | \ i \in I \ \} \subset X$ is bounded), then the constant $\alpha \geq 0$ in Corollary \ref{maincor} depend on $x \in X$. 
    However, by the same argument in Remark \ref{expfast_remark}, if $\{ \max_{i \in I_{n}} \rho(x, z_{i}) \}_{n \in \natural}$ is bounded, then the condition in Theorem \ref{main} is automatically satisfied and $\{ \pi(\tree_{\omega}) \}_{\omega \in \tree^{[1, *]}}$ is uniformly bounded by the property (ii) in Theorem \ref{main}. 
\end{remark}
%
%
%
%
%
%
%
%
%
%
%
%
%
%
%
%
%
%
%
%
%
%
%
%
%
%
%
%
%
%
%
%
%
%
%
%
%
%
%
%
%
%
%
%
%
%
%
%
%
%
%
%
\section{An example of general IFSs}
In this section, we consider an example of general IFSs and the limit sets. 
Indeed, We first give an example of general IFSs which has a connection to the theory of continued fractions and we later give a proposition to indicate the importance of the example of the limit sets. 
Note that while the theory of continued fractions is often discussed in the theory of autonomous IFSs (see, \cite{MU}, \cite{MU2}), it is not often discussed in the theory of generalized IFSs (you can find a recent paper \cite{NT} in the setting for non-autonomous IFSs). 
In addition, while we already obtain the existence of the limit set generated by the IFS in the example by applying results in the third line, this example is not much paid attention to the limit set since it does not satisfy the central condition (the $V$-variability). 
Therefore, it is important to describe an example of the limit set for general IFSs even if the space $X$ is bounded. 

We now give the setting of the example of general IFSs. 
Let $I := \natural$ and $X := \{ z \in \complex \ | \ |z-1/2| \leq 1/2 \}$ where $| \cdot |$ is the Euclidean metric on $\complex$. 
For each $b \in I$, $S_{I} := \{ \phi_{b} \colon X \rightarrow X \ |\ b \in I \} $ is called the IFS of regular continued fractions. 
Here, 
\[ \phi_{b}(z) := \frac{1}{z + b} \quad ( z \in X ). \]
Note that for all $b \in I$, $\phi_{b}(X) \subset X$. 
Indeed, let $Y := \{ z \in \complex |\ \Re z \geq 1 \}$ and let $f \colon \hat{\complex} \to \hat{\complex} $ be the M\"{o}bius transformation defined by $f(z) := 1/z$ \ ($z \in \hat{\complex}$). 
Since $f(0) = \infty$, $f(1) = 1$, $f(1/2+i/2) = 2/(1+i) = (1-i)$, we have $f(\partial X) = \partial Y \cup \{ \infty \} $ and Since f(1/2) = 2, we have $f(X) = Y \cup \{ \infty \}$. 
Therefore, $f \colon X \to Y \cup \{ \infty \}$ is a homeomorphism and 
we deduce that $\phi_{b} = f^{-1} \circ g_{b}$ and $\phi_{b}(X) \subset f^{-1}(Y) \subset X$, where $g_{b} \colon X \to Y$ be the map defined by $g_{b}(z) := z+b$ \ ($b \in I$). 
    
Moreover, $S_{I}$ is a family of contractive mappings on $X$ with uniform contraction constant $c := 4/5$. 
Indeed, note that 
\begin{align*}
    |z+b|^{2}	
    &= |x+b+iy|^{2}
	= (x+b)^{2}+y^{2} 
	= x^{2} + 2bx + b^{2} + y^{2}
	\geq |z|^{2} + b^{2}
	=\frac{5}{4}
\end{align*}
for each $z=x+iy \in X$ and for each $b \in I$. 
It follows that 
\begin{align*}
    |\phi_{b}(z)-\phi_{b}(z^{\prime})|	
    &= \left| \displaystyle \frac{1}{z+b} - \frac{1}{z^{\prime}+b} \right|
	= \frac{|z-z^{\prime}|}{|z+b||z^{\prime}+b|} 
	\leq \frac{4}{5}|z-z^{\prime}|
\end{align*}
for each $z, z^{\prime} \in X$. 
Therefore, $S_{I}$ satisfies the condition (ii) in Definition \ref{def_of_genral_NIFS}.

\begin{example}
    Let $\alpha > 1$ and 
    \begin{align*}
        \tree_{\alpha} := \{ \omega = \omega_{1}\omega_{2} \cdots \in I^{\natural} \ |  \ \omega_{1} + \cdots + \omega_{n} < n \alpha \ \text{for each} \ n \in \natural \ \}.    
    \end{align*}
    We show that $(S_{I}, \tree_{\alpha})$ is a general IFS. 
    To show this, note that $ 1 \ 1 \ 1 \cdots \in I^{\natural}$ is a element of $\tree_{\alpha}$ since $\alpha > 1$. 
    Also, note that $\tree_{\alpha} \subset I^{\natural}$ is closed since we endow $I$ with the discrete topology and $I^{\natural}$ with the product topology, and $\Pi_{[1, n]} \colon I^{\natural} \to I^{n}$ is continuous on $I^{\natural}$ for each $n \in \natural$. 
    It remains to show that $\tree_{\alpha}$ is a tree with $I$. 
    It is easy to show that $\Pi_{1}(\tree_{\alpha})$ is finite. 
    Let $n \in \natural$ and $\omega \in \tree_{\alpha}$. 
    Then, 
    if $\omega_{n+1} \geq (n+1) \alpha - \sum_{l = 1}^{n} \omega_{l} \ (> 0)$, then we deduce that 
    \begin{align*}
        \omega_{1} + \cdots + \omega_{n} + \omega_{n+1} 
        \geq \sum_{l = 1}^{n} \omega_{l} + (n+1) \alpha - \sum_{l = 1}^{n} \omega_{l}
        = (n+1) \alpha.  
    \end{align*}
    We obtain that $\# ( S(\tree_{\alpha}, \omega_{[1, n]}) ) < \infty$ for each $n \in \natural$ and $\omega \in \tree_{\alpha}$, and $\tree_{\alpha}$ is a tree with $I$. 
    Thus, we have proved that $(S_{I}, \tree_{\alpha})$ is a general IFS. 
    
    Note that, by Definitions \ref{definition_of_projection_map} and \ref{Definition_of_limit_set}, the limit set for ($S_{I}$, $\tree_{\alpha}$) is the following non-empty and compact subset: 
    \begin{align*}
        \pi(\tree_{\alpha}) 
        &= \{ x_{1} \in X \ | \ \omega \in \tree_{\alpha},  \{x_{m}\}_{m \in \natural} \ \text{is recursively compatible for} \ \{f_{\omega_{n}}\}_{n \in \natural} \ \} \\
        &= \left\{ [0; \omega_{1}, \omega_{2}, \ldots] \in X \ | \ \omega_{1} + \omega_{2} \cdots + \omega_{n} \leq n \alpha \ \text{for each} \ n \in \natural \ \right\}, 
    \end{align*}
    where $[0; \omega_{1}, \omega_{2}, \ldots]$ is the continued fraction defined by
    \begin{align*}
        [0; \omega_{1}, \omega_{2}, \ldots] := \cfrac{1}{\omega_{1} + \cfrac{1}{\omega_{2} + \cdots}} \quad (\omega := \omega_{1}\omega_{2} \cdots \in I^{\natural} ). 
    \end{align*}
    Here, we use the general theory of continued fractions (for example, see \cite{Kh}). 
\end{example}
Now, we give a proposition to indicate the importance of the limit sets in the above example by using the results in \cite{CV}. 
For $A \subset \complex$, we denote by $\dim_{\mathcal{H}}A$ the Hausdorff dimension of $A$. 
\begin{proposition}\label{continued_fraction__elementray_lemma}
    Let $(S_{I}, \tree_{\alpha})$ 
    be general IFSs defined above. 
    We set
    \begin{align*}
        X_{\alpha} &:= \left\{ [0; \omega_{1}, \omega_{2}, \ldots] \in X \ | \ \omega = \omega_{1}\omega_{2} \cdots \in I^{\natural}, \ \limsup_{n \to \infty} \frac{1}{n} \sum_{i = 1}^{n} \omega_{i} < \alpha \ \right\}. 
    \end{align*}
    Then, we have 
    $\dim_{\mathcal{H}} X_{\alpha} = \dim_{\mathcal{H}} \pi(\tree_{\alpha} )$ and $\dim_{\mathcal{H}} X_{\alpha} (= \dim_{\mathcal{H}} \pi(\tree_{\alpha} ) )$ converges to $1$ as $\alpha$ tends to infinity. 
\end{proposition}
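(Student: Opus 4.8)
The plan is to compare the two sets through the explicit description of $\pi(\tree_\alpha)$ recorded in the Example, namely $\pi(\tree_\alpha)=\{[0;\omega_1,\omega_2,\ldots]\in X\mid \omega_1+\cdots+\omega_n<n\alpha \text{ for all } n\in\natural\}$, combined with two soft facts: countable stability of Hausdorff dimension, and the fact that every finite composition $f_w:=f_{w_1}\circ\cdots\circ f_{w_k}$ ($w\in I^{*}$, $f_\phi:=\Id_X$) restricts to a bi-Lipschitz map of $X$ onto its image. The latter holds because each $\phi_b(z)=1/(z+b)$ is a M\"obius transformation whose only pole $-b$ lies outside $X$; indeed $\tfrac54\le|z+b|^2\le(1+b)^2$ on $X$ (the first bound is already proved in the text), so $|\phi_b'|$ is bounded away from $0$ and $\infty$ on $X$, hence so is $|f_w'|$. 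Since $I^{*}$ is countable, it follows that $\dim_{\mathcal H}\bigl(\bigcup_{w\in I^{*}}f_w(A)\bigr)=\dim_{\mathcal H}A$ for every $A\subseteq X$. Note also $\pi(\tree_\alpha),X_\alpha\subseteq[0,1]$, so both dimensions are at most $1$.

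First I would prove the inclusion $X_\alpha\subseteq\bigcup_{w\in I^{*}}f_w(\pi(\tree_\alpha))$ by a ``last maximizer'' argument. Let $[0;\omega]\in X_\alpha$ and set $S_n:=\omega_1+\cdots+\omega_n$, $S_0:=0$. Since $\limsup_n S_n/n<\alpha$, the sequence $n\mapsto S_n-n\alpha$ tends to $-\infty$, so it attains its maximum over $\naturalz$ at some largest index $k^{*}$. Then for every $m\ge1$ we have $S_{k^{*}+m}-(k^{*}+m)\alpha<S_{k^{*}}-k^{*}\alpha$, i.e. $(\sigma^{k^{*}}\omega)_1+\cdots+(\sigma^{k^{*}}\omega)_m<m\alpha$, so $\sigma^{k^{*}}\omega\in\tree_\alpha$ and hence $[0;\sigma^{k^{*}}\omega]=\pi(\sigma^{k^{*}}\omega)\in\pi(\tree_\alpha)$; moreover $[0;\omega]=f_{\omega_{[1,k^{*}]}}\bigl([0;\sigma^{k^{*}}\omega]\bigr)$, which proves the inclusion. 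Together with the first paragraph this yields $\dim_{\mathcal H}X_\alpha\le\dim_{\mathcal H}\pi(\tree_\alpha)$.

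For the reverse inequality I would use the trivial inclusions $\pi(\tree_\alpha)\subseteq\bar X_\alpha:=\{[0;\omega]\mid \limsup_n\tfrac1n\sum_{i=1}^n\omega_i\le\alpha\}$ and $\bar X_\alpha\subseteq X_{\alpha'}$ for every $\alpha'>\alpha$, giving $\dim_{\mathcal H}\pi(\tree_\alpha)\le\inf_{\alpha'>\alpha}\dim_{\mathcal H}X_{\alpha'}$. At this point I would invoke \cite{CV}, which provides a formula for $\dim_{\mathcal H}X_\alpha$ that is continuous and non-decreasing in $\alpha$ (for $\alpha>1$) and tends to $1$ as $\alpha\to\infty$; right-continuity then forces $\inf_{\alpha'>\alpha}\dim_{\mathcal H}X_{\alpha'}=\dim_{\mathcal H}X_\alpha$, whence $\dim_{\mathcal H}\pi(\tree_\alpha)=\dim_{\mathcal H}X_\alpha$ and the common value tends to $1$ as $\alpha\to\infty$. (For the limit statement alone one can bypass \cite{CV}: if $M\in\natural$ with $M<\alpha$, then $E_M:=\{[0;\omega]\mid \omega_i\le M \text{ for all } i\}\subseteq X_\alpha\cap\pi(\tree_\alpha)$, and $\dim_{\mathcal H}E_M\uparrow1$ as $M\to\infty$ is classical; since the two dimensions are already shown equal, $\dim_{\mathcal H}\pi(\tree_\alpha)=\dim_{\mathcal H}X_\alpha\to1$.)

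The main obstacle is the mismatch between the pointwise constraint ``$S_n<n\alpha$ for all $n$'' defining $\tree_\alpha$ and the asymptotic Ces\`aro constraint ``$\limsup S_n/n<\alpha$'' defining $X_\alpha$: neither set contains the other, and the boundary level set $\bar X_\alpha\setminus X_\alpha=\{\limsup S_n/n=\alpha\}$ cannot be transported into $X_\alpha$ by any finite shift (the quantity $S_n-n\alpha$ may be unbounded above there), so one cannot dispose of it by the combinatorial/bi-Lipschitz bookkeeping alone. This is precisely why the argument rests on the continuity of $\alpha\mapsto\dim_{\mathcal H}X_\alpha$ furnished by \cite{CV}; everything else — the bi-Lipschitz estimate for $f_w$, countable stability, and the last-maximizer reduction — is routine.
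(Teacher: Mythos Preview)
Your argument is correct and follows the same two-step template as the paper --- establish $\dim_{\mathcal H}X_\alpha\le\dim_{\mathcal H}\pi(\tree_\alpha)$ by bi-Lipschitz bookkeeping and countable stability, then close the reverse inequality via the continuity result of \cite{CV} --- but both halves are executed differently. For the inequality $\le$, the paper decomposes $X_\alpha$ according to the first level $N$ from which the constraint $S_n<n\alpha$ holds, splits over all admissible prefixes $\tau\in\mathcal I_N$, and then replaces $\tau$ by the all-ones word $\mathbb I$ to embed each piece (after applying $\phi_\tau^{-1}$) into $\pi(\tree_\alpha)$; your last-maximizer reduction (taking $k^*$ to be the last index at which $n\mapsto S_n-n\alpha$ is maximal) delivers the same inclusion $X_\alpha\subset\bigcup_{w\in I^*}f_w(\pi(\tree_\alpha))$ in one line and avoids the auxiliary index sets entirely. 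For the inequality $\ge$ the two arguments are dual: the paper uses the inclusion $\pi(\tree_{\alpha-\epsilon})\subset X_\alpha$ together with \emph{left}-continuity of $\alpha\mapsto\dim_{\mathcal H}\pi(\tree_\alpha)$ from \cite{CV}, whereas you use $\pi(\tree_\alpha)\subset X_{\alpha'}$ for $\alpha'>\alpha$ together with \emph{right}-continuity of $\alpha\mapsto\dim_{\mathcal H}X_{\alpha}$. Both routes are valid, but note they invoke \cite{CV} for the continuity of \emph{different} one-parameter families of sets; make sure the statement you cite from \cite{CV} matches the family you need, since neither continuity can be deduced from the other using only the inequality $\dim_{\mathcal H}X_\alpha\le\dim_{\mathcal H}\pi(\tree_\alpha)$ and monotonicity.
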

\begin{proof}
    Let $\alpha > 1$. 
    We first show that $\dim_{\mathcal{H}} X_{\alpha} \leq \dim_{\mathcal{H}} \pi(\tree_{\alpha} )$
    For each $N \geq 2$, we set 
    \begin{align*}
        \mathcal{I}_{N} 
        &:= \left\{ \ \tau \in I^{N-1} \ | \ 
        \begin{array}{c}
            \text{there exists} \ \omega = \omega_{1}\omega_{2} \cdots \in I^{\natural} \ \text{s.t.}  \\
             \omega_{[1, N-1]} = \tau, \ \sum_{i = 1}^{n} \omega_{i} < n \alpha \ \text{for all} \ n \geq N
        \end{array}
         \ \right\} \\
         &= \left\{ \ \tau \in I^{N-1} \ | \ 
         \begin{array}{c}
             \tau = \tau_{1} \cdots \tau_{N-1}, \  
             \sum_{i = 1}^{N-1} \tau_{i} + (n - N + 1) < n \alpha \ \text{for all} \ n \geq N
        \end{array}
         \ \right\} \\
         &= \textstyle \left\{ \ \tau \in I^{N-1} \ | \ \tau = \tau_{1} \cdots \tau_{N-1}, \ \sum_{i = 1}^{N-1} \tau_{i} + 1 < N \alpha \ \right\}. 
    \end{align*}
    Note that $\limsup_{n \to \infty} ( \sum_{i = 1}^{n} \omega_{i} )/n < \alpha $ if and only if there exists $N \in \natural$ with $N \geq 2$ such that $\sum_{i = 1}^{n} \omega_{i} < n \alpha$ for all $n \in \natural$ with $n \geq N$, and by direct calculations we have
    \begin{align}
        &\left\{ [0; \omega_{1}, \omega_{2}, \ldots] \in X \ | \ 
        \begin{array}{c}
            \omega = \omega_{1}\omega_{2} \cdots \in I^{\natural}, \omega_{[1, N-1]} = \tau \ \text{and} \ \\
            \sum_{i = 1}^{n} \omega_{i} < n \alpha \ \ \text{for all} \ n \geq N 
        \end{array}
        \ \right\} \notag \\
        & = \left\{ \phi_{\omega_{[1, N-1]}}( [0; \omega_{N}, \omega_{N+1}, \ldots] ) \in X \ | \ 
        \begin{array}{c}
            \omega = \omega_{1}\omega_{2} \cdots \in I^{\natural}, \omega_{[1, N-1]} = \tau \ \  \text{and} \\
            \text{ for all } \ n \geq N, \ \sum_{i = 1}^{N-1} \tau_{i} + \sum_{i = N}^{n} \omega_{i} < n \alpha
        \end{array}
        \ \right\} \notag \\
        & = \phi_{\tau} \left( \left\{ [0; \omega_{1}, \omega_{2}, \ldots] \in X \ | \ 
        \begin{array}{c}
            \omega = \omega_{1}\omega_{2} \cdots \in I^{\natural} \ \ \text{and \ for all} \ m \geq 1, \\
            \sum_{i = 1}^{N-1} \tau_{i} + \sum_{i = 1}^{m} \omega_{i} < m \alpha + (N-1) \alpha 
        \end{array}
        \ \right\} \right) \label{conti_frac_calc} \notag
    \end{align}
    for each $N \geq 2$ and $\tau = \tau_{1} \cdots \tau_{N-1} \in \mathcal{I}_{N}$. 
    By the above arguments, we deduce that 
    \begin{align*}
        X_{\alpha}
        &= \bigcup_{N = 2}^{\infty} \left\{ [0; \omega_{1}, \omega_{2}, \ldots] \in X \ | \ \omega = \omega_{1}\omega_{2} \cdots \in I^{\natural}, \ \sum_{i = 1}^{n} \omega_{i} < n \alpha \ \text{for all} \ n \geq N \  \right\} \\
        &= \bigcup_{N = 2}^{\infty} \bigcup_{ \tau \in \mathcal{I}_{N}} \left\{ [0; \omega_{1}, \omega_{2}, \ldots] \in X \ | \ 
        \begin{array}{c}
            \omega = \omega_{1}\omega_{2} \cdots \in I^{\natural}, \omega_{[1, N-1]} = \tau \\
            \sum_{i = 1}^{n} \omega_{i} < n \alpha \ \text{for all} \ n \geq N 
        \end{array}
        \ \right\} \\
        &= \bigcup_{N = 2}^{\infty} \bigcup_{ \tau \in \mathcal{I}_{N}} \phi_{\tau} \left( \left\{ [0; \omega_{1}, \omega_{2}, \ldots] \in X \ | \ 
        \begin{array}{c}
            \omega = \omega_{1}\omega_{2} \cdots \in I^{\natural} \ \  \text{and \ for all} \ m \geq 1, \\
            \sum_{i = 1}^{N-1} \tau_{i} + \sum_{i = 1}^{m} \omega_{i} < m \alpha + (N-1) \alpha 
        \end{array}
        \ \right\} \right) \\
        &\subset \bigcup_{N = 2}^{\infty} \bigcup_{ \tau \in \mathcal{I}_{N}} \phi_{\tau} \left( \left\{ [0; \omega_{1}, \omega_{2}, \ldots] \in X \ | \ 
        \begin{array}{c}
            \omega = \omega_{1}\omega_{2} \cdots \in I^{\natural} \ \  \text{and \ for all} \ m \geq 1, \\
            \sum_{i = 1}^{N-1} 1 + \sum_{i = 1}^{m} \omega_{i} < m \alpha + (N-1) \alpha 
        \end{array}
        \ \right\} \right). 
    \end{align*}
    Therefore, since $\phi_{\tau}$ is bi-Lipschitz on $X$ for each $N \geq 2$ and $\tau \in \mathcal{I}_{N}$ (for example, see \cite{MU}, \cite{IOS}), we deduce that
    \begin{align*}
        &\dim_{\mathcal{H}}X_{\alpha} \\
        &\leq \sup_{N \geq 2} \sup_{\tau \in \mathcal{I}_{N}} \dim_{\mathcal{H}} \phi_{\tau} \left( \left\{ [0; \omega_{1}, \omega_{2}, \ldots] \in X \ | \ 
        \begin{array}{c}
            \omega = \omega_{1}\omega_{2} \cdots \in I^{\natural} \ \  \text{and \ for all} \ m \geq 1, \\
            \sum_{i = 1}^{N-1} 1 + \sum_{i = 1}^{m} \omega_{i} < m \alpha + (N-1) \alpha 
        \end{array}
        \ \right\} \right) \\
        &= \sup_{N \geq 2} \sup_{\tau \in \mathcal{I}_{N}} \dim_{\mathcal{H}} 
        \left( \left\{ [0; \omega_{1}, \omega_{2}, \ldots] \in X \ | \ 
        \begin{array}{c}
            \omega = \omega_{1}\omega_{2} \cdots \in I^{\natural} \ \  \text{and \ for all} \ m \geq 1, \\
            \sum_{i = 1}^{N-1} 1 + \sum_{i = 1}^{m} \omega_{i} < m \alpha + (N-1) \alpha 
        \end{array}
        \ \right\} \right) \\
        &= \sup_{N \geq 2} \dim_{\mathcal{H}} \left( \left\{ [0; \omega_{1}, \omega_{2}, \ldots] \in X \ | \ 
        \begin{array}{c}
            \omega = \omega_{1}\omega_{2} \cdots \in I^{\natural}, \  \omega_{[1, N-1]} = \mathbb{I}, \\
            \sum_{i = 1}^{n} \omega_{i} < n \alpha \text{\ for all} \ n \geq N
        \end{array}
        \ \right\} \right) \\
        &\leq \dim_{\mathcal{H}} \left( \left\{ [0; \omega_{1}, \omega_{2}, \ldots] \in X \ | \ 
        \begin{array}{c}
            \omega = \omega_{1}\omega_{2} \cdots \in I^{\natural}, \\
            \sum_{i = 1}^{n} \omega_{i} < n \alpha \text{\ for all} \ n \geq 1
        \end{array}
        \ \right\} \right)
        = \dim_{\mathcal{H}} \pi(\tree_{\alpha}), 
    \end{align*}
    where $\mathbb{I} = \underbrace{1 \cdots 1}_{N-1} \in \mathcal{I}_{N}$. 
    To show that $\dim_{\mathcal{H}} X_{\alpha} \geq \dim_{\mathcal{H}} \pi(\tree_{\alpha} )$, Let $\epsilon \in (0, \alpha -1)$.  
    Since
    \begin{align*}
        \pi(\tree_{\alpha - \epsilon}) 
        &\subset \{ [0; \omega_{1}, \omega_{2}, \ldots ] \in X \ | \ \omega = \omega_{1}\omega_{2} \cdots \in I^{\natural}, \ \limsup_{n \to \infty} \frac{1}{n} \sum_{i = 1}^{n} \omega_{i} \leq \alpha - \epsilon \ \} \\
        &\subset \{ [0; \omega_{1}, \omega_{2}, \ldots ] \in X \ | \ \omega = \omega_{1}\omega_{2} \cdots \in I^{\natural}, \ \limsup_{n \to \infty} \frac{1}{n} \sum_{i = 1}^{n} \omega_{i} < \alpha \ \}
        = X_{\alpha}, 
    \end{align*}
    we deduce that $\dim_{\mathcal{H}} \pi(\tree_{\alpha - \epsilon}) \leq \dim_{\mathcal{H}} X_{\alpha}$. 
    Now, by the continuity of the dimension function $\alpha \mapsto \dim_{\mathcal{H}} \pi(\tree_{\alpha})$ (Theorem 1 in \cite{CV}), it follows that $\dim_{\mathcal{H}} \pi(\tree_{\alpha} ) \leq \dim_{\mathcal{H}} X_{\alpha}$. 
    Also, by Theorem 1 in \cite{CV}, we have proved the rest of the claims.    
    Hence, we have proved our proposition. 
\end{proof}
\section*{Acknowledgment} 
The author would like to thank Hiroki Sumi, Yuto Nakajima and Mitsuhiro Shishikura for giving me helpful comments in Sections 3 and 4. 
The author also would like to thank Shunsuke Usuki for giving me helpful comments in Section 5. 
The author is supported by JST CREST Grant Number JPMJCR1913.  

\end{document}